\DeclareMathOperator{\tr}{tr}
\renewcommand{\epsilon}{\varepsilon}
\newcommand{\boF}{\mathcal{F}}
\newcommand{\boM}{\mathcal{M}}
\newcommand{\boA}{\mathcal{A}}
\newcommand{\boB}{\mathcal{B}}
\newcommand{\boL}{\mathcal{L}}
\newcommand{\boN}{\mathcal{N}}
\newcommand{\boZ}{\mathcal{Z}}
\newcommand{\MM}{\mathbf{M}}
\newcommand{\R}{\mathbb{R}}
\newcommand{\Z}{\mathbb{Z}}
\renewcommand{\S}{\mathbb{S}}
\newcommand{\eps}{\varepsilon}
\newcommand{\Ome}{\Omega}
\newtheorem{defn}{Definition}
\newtheorem{thm}{Theorem}
\newtheorem{cor}{Corollary}
\newtheorem{prop}{Proposition}
\newtheorem{lem}{Lemma}
\newcommand{\inter}[1]{\overset{\circ}{#1}}
\newcommand{\barre}[1]{\overline{#1}}
\renewcommand{\phi}{\varphi}
\newtheorem*{thm*}{Theorem}
\newcounter{remark}
\newcounter{case}
\newcounter{construction}
\newcounter{fact}
\newcommand{\Hbf}{\mathbf{H}}
\newcommand{\Lbf}{\mathbf{L}}
\newcommand{\pr}{\partial}
\newcommand{\nb}{\barre \nabla}
\newcommand{\Za}{Z^{(a)}}
\newcommand{\Iso}{\mathfrak{Is}}
\DeclareMathOperator{\Span}{span}
\DeclareMathOperator{\spt}{spt}
\DeclareMathOperator{\ric}{Ric}
\title{Rigidity of riemannian manifolds containing an equator}
\author{Laurent Mazet}
\address{Institut Denis Poisson, CNRS UMR 7013, Universit\'e de Tours, 
Universit\'e d'Orl\'eans, Parc de Grandmont, 37200 Tours, France}
\email{laurent.mazet@univ-tours.fr}
\thanks{The authors was partially supported by the ANR-19-CE40-0014 grant.}
\begin{document}

\maketitle

\begin{abstract}
In this paper, we prove that a Riemannian $n$-manifold $M$ with sectional 
curvature bounded above by $1$ that contains a minimal $2$-sphere of area 
$4\pi$ 
which has index at least $n-2$ has constant sectional curvature $1$. The proof 
uses the construction of ancient mean curvature flows that flow out of a 
minimal submanifold. As a consequence we also prove a rigidity result for the 
Simon-Smith minimal spheres.
\end{abstract}

\section{Introduction}

Let $g$ be a complete Riemannian metric on the $2$-sphere $\S^2$. If its 
sectional curvature is between $0$ and $1$, it is known that any closed 
geodesic on $(\S^2,g)$ has length at least $2\pi$ \cite{Pog2}. Moreover if such 
a closed geodesic has length $2\pi$, $(\S^2,g)$ is isometric to the unit 
$2$-sphere $\S_1^2=\{p\in\R^3\mid \|p\|=1\}$ with the induce metric. The proof 
of this result is given in \cite{AnHo} where the authors attribute the theorem 
to E.~Calabi.

So a question is what happens in higher dimension. In dimension $3$, one can 
replace geodesics by minimal $2$-sphere. Actually one can prove that, if the 
sectional curvature is bounded above by $1$, any minimal $2$-sphere has area at 
least 
$4\pi$. In \cite{MaRo}, H.~Rosenberg and the author study the equality case. If 
$(M,g)$ is a Riemannian $3$-manifold with sectional curvature $0\le K\le 1$ 
that contains a minimal $2$-sphere of area $4\pi$, they prove that the 
universal cover of $M$ is isometric to the unit $3$-sphere $\S_1^3$ or the 
product $\S_1^2\times \R$.

One purpose of this paper is to investigate generalizations of this result to 
higher dimensions. Actually if $(M,g)$ is a Riemannian $n$-manifold with 
sectional curvature $K\le 1$, we still have that the area of a minimal 
$2$-sphere is at least $4\pi$. So what can be said in the equality case ?

A model of this situation is an equatorial $2$-sphere in the unit $n$-sphere 
$\S_1^n$. So one could expect that under some extra hypotheses this is the only
example.

If $\Sigma$ is a minimal $m$-submanifold in $M$, $\Sigma$ is critical for the 
volume functional. The stability of this critical point is given by the Jacobi 
operator which is a self-adjoint second order elliptic operator that acts on 
sections of the normal bundle to $\Sigma$. As a critical point, the index of 
$\Sigma$ is given by the number of negative eigenvalues of this operator. In 
the case of an equatorial $2$-sphere $S$ in $\S_1^n$, the index of $S$ is $n-2$.

The first main result of the paper is a rigidity result under such an 
instability hypothesis.

\begin{thm*}
Let $M$ be a Riemannian $n\ge 3$-manifold whose sectional curvature is bounded 
above by $1$. Let us assume that $M$ contains an immersed minimal $2$-sphere of 
area $4\pi$ which has index at least $n-2$. Then the universal cover of $M$ is 
isometric to the unit sphere $\S_1^n$.
\end{thm*}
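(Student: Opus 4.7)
The plan is to use the $n-2$ unstable directions of the Jacobi operator of $\Sigma$ to construct an $(n-2)$-parameter family of ancient mean curvature flows emanating from $\Sigma$, to show that each flowing sphere is itself a minimal $2$-sphere of area $4\pi$, to conclude that $K_M\equiv 1$ on an open neighborhood of $\Sigma$, and then to propagate this to all of $M$.

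Since $\dim\Sigma=2$ and $\dim M=n$, the normal bundle $N\Sigma$ has rank $n-2$, matching the index assumption exactly. Choose $L^2$-orthonormal eigensections $\phi_1,\dots,\phi_{n-2}\in\Gamma(N\Sigma)$ of the Jacobi operator with negative eigenvalues $\lambda_1\le\dots\le\lambda_{n-2}<0$. For each unit vector $v=(v_1,\dots,v_{n-2})\in\S^{n-3}\subset\R^{n-2}$ I would construct an ancient mean curvature flow $(\Sigma_t^v)_{t\in(-\infty,0]}$ of $2$-spheres in $M$ with $\Sigma_t^v\to\Sigma$ as $t\to-\infty$ and with normal graph over $\Sigma$
\[u_t^v=\sum_{i=1}^{n-2}v_i\,e^{-\lambda_i t}\phi_i+o(e^{-\lambda_1 t})\quad\text{as }t\to-\infty.\]
The construction adapts the now-standard ancient-flow technique (linearization plus a contraction mapping in a weighted norm near $t=-\infty$), developed for hypersurfaces by Choi-Mantoulidis, White and others, applied here to the vector-valued Jacobi operator in higher codimension.

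The area is non-increasing along each flow and tends to $4\pi$ as $t\to-\infty$, so $|\Sigma_t^v|\le 4\pi$ for all $t\le 0$. To upgrade this to equality I would invoke the universal lower bound $4\pi$ on the area of a minimal $2$-sphere under $K\le 1$ via a Simon-Smith min-max run on a sweep-out of a tubular ball around $\Sigma$ built from $\{\Sigma_0^v\}_{v\in\S^{n-3}}$: a strict drop of area along any one of the flows would yield a sweep-out of width strictly below $4\pi$, producing a minimal $2$-sphere of area less than $4\pi$, a contradiction. Equality in the area monotonicity formula then forces the mean curvature of $\Sigma_t^v$ to vanish identically, so every $\Sigma_t^v$ is a minimal $2$-sphere of area $4\pi$. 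The chain
\[4\pi=\int_{\Sigma_t^v}K_{\Sigma_t^v}=\int_{\Sigma_t^v}\!\left(K_M(T\Sigma_t^v)-\tfrac12|A|^2\right)\le 4\pi,\]
coming from Gauss-Bonnet and the Gauss equation, then gives that $\Sigma_t^v$ is totally geodesic and that $K_M\equiv 1$ on its tangent $2$-planes.

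The leaves $\{\Sigma_t^v\}$ cover a neighborhood $U$ of $\Sigma$ and realize, at each $p\in U$, an $(n-2)$-parameter family of $2$-planes on which $K_M=1$; combined with the totally geodesic property, a short linear-algebra argument (or an inductive reapplication of the construction starting from a secondary leaf through $p$) promotes this to $K_M\equiv 1$ throughout $U$. The set where $K_M\equiv 1$ is then open, nonempty, and stable under the same construction (any interior boundary point in $M$ admits a nearby minimal $2$-sphere of area $4\pi$ with index $n-2$ to restart from), so it equals $M$; Killing-Hopf identifies the universal cover with $\S_1^n$. The main obstacle I anticipate is producing the ancient flow with the prescribed $(n-2)$-parameter asymptotics in higher codimension while ruling out early singularities, and then promoting the area bound to a sharp equality via a min-max calibrated to the unstable modes; the final global extension, lacking any compactness or completeness hypothesis, is softer but still requires care.
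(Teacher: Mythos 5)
Your overall framework---construct ancient mean curvature flows out of $\Sigma$ along its unstable normal directions and extract rigidity from an equality case---is indeed the paper's framework, and Theorem~\ref{thm:ancientmcf} provides exactly the ancient solutions you need. But the central step of your argument fails.

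You write that area monotonicity plus a lower area bound forces $\boA(\Sigma_t^v)=4\pi$ for all $t$, hence $\vec H\equiv 0$, hence each $\Sigma_t^v$ is minimal. This is self-defeating: minimal submanifolds are fixed points of the mean curvature flow, so if $\Sigma_0^v$ were minimal the ancient solution would be constant, contradicting the nontrivial asymptotics $u_t^v=\sum v_ie^{-\lambda_it}\phi_i+o(\cdot)$ that you prescribed. Concretely, $\frac{d}{dt}\boA(\Sigma_t^v)=-m\int_{\Sigma_t^v}\|\vec H\|^2\le 0$ with equality exactly when $\vec H=0$, so a nontrivial ancient flow starting at area $4\pi$ has $\boA(\Sigma_t^v)<4\pi$ strictly for later $t$. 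There is no contradiction here with the $4\pi$ lower bound on minimal spheres, because the $\Sigma_t^v$ are \emph{not} minimal. The ``Simon--Smith min-max on a tubular ball'' you invoke to prevent the area from dropping is not a theorem in this setting (the relevant min-max theory lives on closed $3$-manifolds and produces free-boundary problems in a ball), and even if it were, the conclusion you extract from it is false for the reason above.

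The paper resolves this by monitoring not the area but the Willmore-type quantity $F(S)=\boA(S)+\int_S\|\vec H\|^2-4\pi$, which is nonnegative when $K_M\le 1$ by Gauss and Gauss--Bonnet and vanishes exactly when $S$ is totally umbilic with $K_{TS}\equiv 1$. Along the ancient flow one shows $\frac{d}{dt}F(\Sigma_t)\le 4\sup_{\Sigma_t}\|\vec H\|^2\,F(\Sigma_t)$, and a Gronwall argument using $F\to 0$ and $\|\vec H\|^2=O(e^{4t})$ as $t\to-\infty$ forces $F(\Sigma_t)\equiv 0$. The $\Sigma_t$ are thus umbilic but not minimal, the flow runs along normal geodesics with $H_s=\tan s$, and the rigidity falls out of an explicit Jacobi field computation showing $\Psi^*g=\cos^2 s\,g_{\S_1^2}+ds^2+\sin^2s\,g_{\S_1^{n-3}}$.

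You also skip the step where the index hypothesis is actually used. Before running any flow the paper proves (Lemma~\ref{lem:eigen}) that \emph{every} negative Jacobi eigenvalue equals $-2$, the corresponding eigensections are parallel, and the negative eigenspace trivializes the whole rank-$(n-2)$ normal bundle, with $(R(e,v)e,v)=1$ for all unit tangent $e$ and normal $v$ along $\Sigma$. This is derived by applying the inequality $F\ge 0$ to the normal variation $\Phi(p,tV(p))$ and combining the second-variation pinching $\lambda\le-2$ (from $\lambda^2+2\lambda\ge 0$) with the curvature bound $\lambda\ge -2$. Without this lemma you have no control on the $\lambda_i$ (so your asymptotic expansion is ill-posed if they differ), no parallelism of the normal frame, and no infinitesimal curvature rigidity to feed the Jacobi field analysis. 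Your final open-and-closed propagation is also much softer than what the paper does: the explicit metric identification in normal coordinates, together with the Jacobi-field computation showing the focal point at $s=\pi/2$ and the antipodal symmetry $G(-p)=-G(p)$, produces a local isometry from $\S_1^n$ onto $M$ that is then shown to be a covering; merely knowing $K_M=1$ on a collection of $2$-planes through points of a tube does not directly give constant curvature.

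In short: keep the ancient-flow construction, but replace area monotonicity by the $F$-functional monotonicity, and establish the eigenvalue/parallelism rigidity of Lemma~\ref{lem:eigen} first, since that is where the index $\ge n-2$ hypothesis enters.
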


Let us notice that the instability hypothesis can be replaced by an other 
version.

\begin{defn}
Let $\Sigma$ be a minimal submanifold in $(M,g)$. We say that $\Sigma$ is 
unstable in any parallel directions if the restriction of the Jacobi operator 
to any parallel sub-bundle of the normal bundle to $\Sigma$ has index at least 
$1$.
\end{defn}

The above theorem gives an answer to a question that arises from a result in 
\cite{AnHo}. In \cite[Corollary 5.11]{AnHo}, L.~Anderson and R.~Howard prove 
that a Riemannian $n$-manifold $M$ with sectional curvature below $1$  
containing 
isometrically a neighborhood of the equator $\S_1^{n-1}$ in $\S_1^{n}$ is 
isometric to $\S_1^n$. The hypothesis that a whole neighborhood of the equator 
belongs to $M$ seems strong and the question is to find a weaker hypothesis. 
Actually our main result gives an infinitesimal 
version of Anderson-Howard result. If $M$, with $K\le 1$, contains a 
totally geodesic hypersurface isometric to $\S_1^{n-1}$ that is unstable as a 
minimal hypersurface, then $M$ is isometric to $\S_1^n$. The idea is that the 
totally geodesic $\S_1^{n-1}$ contains a minimal $2$-sphere of area $4\pi$ and 
index at least $n-2$. Actually in the same spirit as Anderson-Howard theorem, 
there is a 
result by D.~Panov and A.~Petrunin \cite[Theorem 1.4]{PaPe} with a weaker
hypothesis: if $S$ is an equatorial $2$-sphere in $\S_1^n$ and $S^+$ denotes an 
hemisphere of $S$, Panov and Petrunin need just that $M$ contains isometrically 
a neighborhood of $S^+$ in $\S_1^n$. 

The proof of the main theorem uses ideas that already appear in \cite{MaRo}:  
if $S$ is an immersed $2$-sphere we define the $F$ functional by 
$F(S)=\boA(S)+\int_S\|\vec H\|^2-4\pi$ where $\boA(S)$ is the area of $S$ and 
$\vec 
H$ 
is 
the mean curvature vector of $S$. Actually if $F(S)$ vanishes, $S$ is 
totally umbilical and we obtain some information on the sectional curvature of 
$M$ along $S$. So if $S_0$ is the minimal $2$-sphere given by the statement of 
the theorem $F(S_0)=0$. The idea is to explore the geometry of $M$ by computing 
$F(S_t)$ along a deformation $\{S_t\}_t$ of $S_0$. One of the novelties is the 
construction of the family $\{S_t\}_t$. Actually we produce $\{S_t\}$ as a mean 
curvature flow that flows out of $S_0$. More precisely, we construct non 
trivial ancient solutions $\{S_t\}_{t\in(-\infty,b)}$ of the mean curvature 
flow 
such that, as $t\to -\infty$, $S_t$ converges to $S_0$.

The idea is that the eigen-sections of the Jacobi operator associated to the 
first eigenvalue give directions in which such an ancient mean curvature flow 
can be initiated. A similar idea appear in the work of K.~Choi and 
C.~Mantoulidis~\cite{ChMa} where they construct ancient mean curvature flows 
"tangent" to the eigenspaces with negative eigenvalues. Then they prove several 
uniqueness results for ancient mean curvature flow in $\S_1^n$.
An other example is \cite{MrPa} where A.~Mramor and A.~Payne produce an 
eternal solution of the mean curvature flow that flows out of the catenoid.  
Let us 
notice that good introductions to the study of high codimension mean 
curvature flow can be found in the paper of K.~Smoczyk \cite{Smo} and the PhD 
thesis of C.~Baker \cite{Bak}.

The index hypothesis in the above theorem can appear very particular. Actually 
there are situations where it is quite natural. As critical points of the area 
functional, minimal hypersurfaces can be produced by a minimization process. 
However one have to consider a non-trivial class of hypersurfaces to produce a 
non-trivial critical point. So in order to solve this difficulty, a Morse 
theoretical approach has been developed. In \cite{Smi}, F.~Smith is able to  
contruct minimal $2$-spheres in any riemannian $(\S^3,g)$. It's proof is based 
on the following ideas. Let $\Lambda$ be the set of paths 
$\{\sigma_t\}_{t\in[-1,1]}$ in the space of $2$-spheres in $\S^3$ that 
sweeps out $\S^3$ (see precise definitions and statements in 
Section~\ref{sec:minmax}). Then he considers the quantity
\[
W(\S^3,g)=\inf_{\{\sigma_t\}\in\Lambda}\max_{t\in[-1,1]}\boA(\sigma_t)
\]
called the Simon-Smith width of $(\S^3,g)$. 

First this quantity is positive and Smith proves that it is realized by the 
area of a finite collection of minimal spheres. Besides it is reasonable to 
think that the index of these collection of minimal spheres is $1$. 
F.~C.~Marques and 
A.~Neves \cite{MaNe6} proved the upper-bound by $1$. So the  
second main result of this paper is
\begin{thm*}
Let $(\S^3,g)$ be a Riemannian $3$-sphere whose sectional curvature is bounded 
above by $1$. Then $W(\S^3,g)\ge 4\pi$ and, if $W(\S^3,g)=4\pi$, then 
$(\S^3,g)$ is isometric to $\S_1^3$.
\end{thm*}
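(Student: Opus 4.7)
The plan is to combine three ingredients already at our disposal: F.~Smith's min-max realization of the Simon-Smith width, the $4\pi$ lower bound for the area of minimal $2$-spheres under $K\le 1$ (recalled in the introduction), and the first main theorem stated above.

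For the inequality $W(\S^3,g)\ge 4\pi$, I would invoke Smith's theorem: $W(\S^3,g)$ is positive and realized as a finite sum $W=\sum_{i} n_i\boA(\Sigma_i)$, where the $\Sigma_i$ are smooth immersed minimal $2$-spheres and the $n_i$ are positive integers. Under $K^M\le 1$, the standard Gauss-equation argument---minimality of $\Sigma_i$ implies $K_{\Sigma_i}\le K^M(T\Sigma_i)\le 1$, hence $4\pi=\int_{\Sigma_i} K_{\Sigma_i}\,\dd A\le \boA(\Sigma_i)$ by Gauss-Bonnet---gives $\boA(\Sigma_i)\ge 4\pi$ for every $i$, so
\[
W = \sum_i n_i\boA(\Sigma_i)\ge 4\pi\sum_i n_i \ge 4\pi.
\]

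Now suppose $W(\S^3,g)=4\pi$. Equality throughout the above chain forces $\sum_i n_i =1$ and $\boA(\Sigma_i)=4\pi$ for the single surviving index, so the min-max configuration is a single minimal $2$-sphere $\Sigma$ of area exactly $4\pi$ with multiplicity one. To apply the first main theorem with $n=3$, I need $\mathrm{index}(\Sigma)\ge n-2=1$. The Marques-Neves upper bound cited in the introduction gives $\mathrm{index}(\Sigma)\le 1$, and the matching lower bound follows from a standard min-max argument: were $\Sigma$ strictly stable, the implicit function theorem would produce nearby $2$-spheres of strictly smaller area foliating a neighborhood of $\Sigma$, allowing one to deform the near-maximal slice of an almost-optimal sweep-out across $\Sigma$ and strictly reduce its maximal area below $4\pi$, contradicting $W=4\pi$. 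Hence $\mathrm{index}(\Sigma)=1$, and the first main theorem yields that the universal cover of $\S^3$ is isometric to $\S_1^3$. Since $\S^3$ is simply connected, $(\S^3,g)$ itself is isometric to $\S_1^3$.

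The main delicate point I expect is the clean justification of the lower index bound $\mathrm{index}(\Sigma)\ge 1$: excluding a strictly stable, multiplicity-one achieved limit in the Simon-Smith min-max is standard in the Pitts/Marques-Neves framework but requires spelling out the appropriate pull-tight/deformation lemma in the smooth $2$-sphere setting. Once this is in place, the rest of the argument reduces cleanly to Smith's existence theorem, the Gauss-Bonnet area lower bound, and the first main theorem of the paper.
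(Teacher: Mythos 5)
There is a genuine gap, and it is exactly where the paper's real work lies. Your foliation argument only excludes a \emph{strictly} stable min-max sphere (positive first Jacobi eigenvalue); it does not exclude a degenerate index-$0$ sphere whose first Jacobi eigenvalue is zero. In that case the implicit function theorem no longer produces a neighborhood foliated by spheres of strictly smaller area, so the "deform the near-maximal slice across $\Sigma$" step breaks down, and you cannot conclude $\mathrm{index}(\Sigma)\ge 1$. In fact the paper emphasizes this cannot be swept under the rug: the $C^{1,1}$ metric obtained by gluing two hemispheres of $\S^3_1$ to the ends of $\S^2_1\times[-1,1]$ (Figure~\ref{fig:fig1}) has $W=4\pi$, realized by a degenerate index-$0$ sphere, and is not round. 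So any proof must use smoothness in the degenerate case, and a "standard min-max index lower bound" argument does not.

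The paper's proof therefore does something quite different from your plan. It introduces the notion of an \emph{almost stable} minimal hypersurface, proves a local-minimum property in the space of flat cycles for almost stable ones (Propositions~\ref{prop:locmin1}, \ref{prop:locmin2}, \ref{prop:neigh}), and pairs this with a current-convergence result for Simon--Smith min-max sequences (Proposition~\ref{prop:currentlimit}) to show that the width-achieving sphere cannot be almost stable. That leaves the case of an index-$0$ sphere with vanishing first eigenvalue that is \emph{not} almost stable; this residual case is handled by a delicate monotonicity computation of the $F$-functional along a canonical local foliation, ultimately forcing $\ric(\nu)=2$ and contradicting that $0$ is the first eigenvalue of the Jacobi operator. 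None of this is present in your outline. Your first two paragraphs (the lower bound $W\ge 4\pi$, the reduction to a single multiplicity-one sphere of area $4\pi$, and the appeal to Theorem~\ref{th:rigidity} when $\mathrm{index}\ge 1$) match the paper and are fine; the missing content is the entire treatment of the index-$0$ case, which is precisely what Section~\ref{sec:minmax} is built to handle.
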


If one knows that $W(\S^3,g)$ is realized by an index $1$ minimal $2$-sphere 
the above result is a direct consequence of our first rigidity result. So the 
difficulty is to deal with the case where $W(\S^3,g)$ is realized by a  
$2$-sphere of index $0$. Actually one can think about the following example: 
the cylinder $\S_1^2\times[-1,1]$ capped by two hemispheres ${\S_1^3}^+$ (see 
Figure \ref{fig:fig1}).
\begin{figure}
\resizebox{0.6\linewidth}{!}{\input{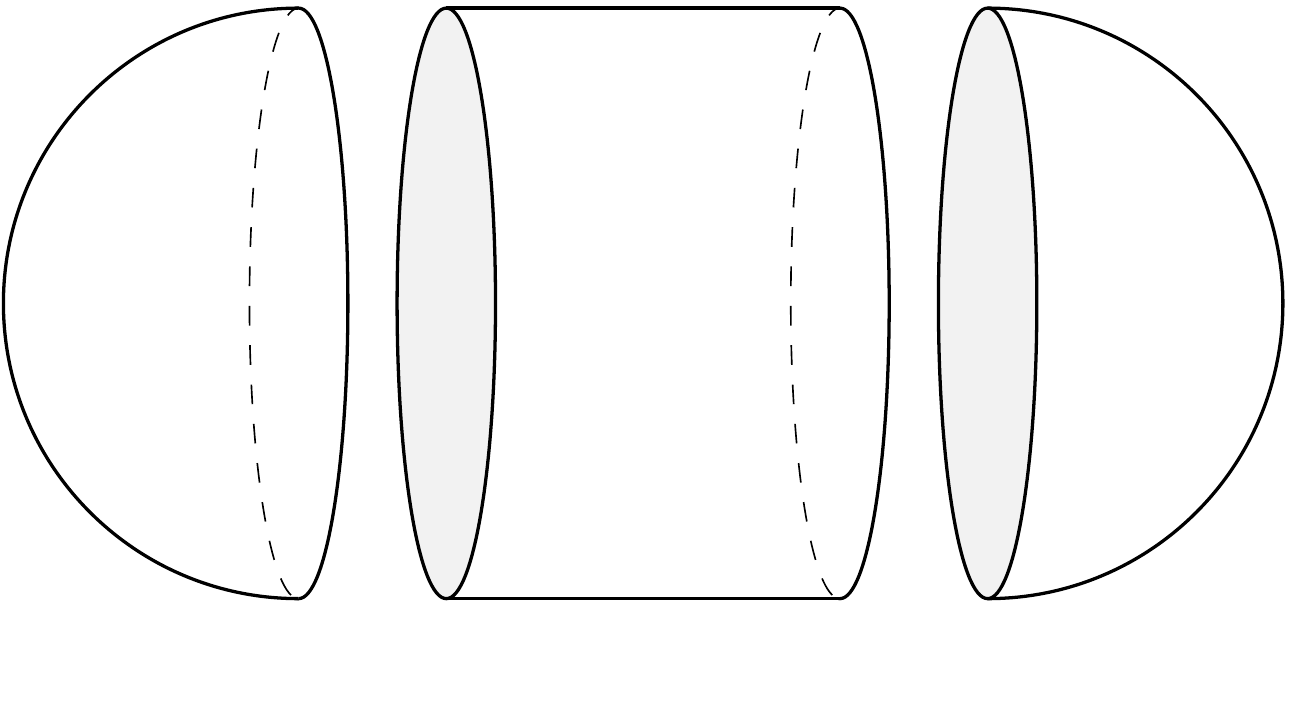_t}}
\caption{a $C^{1,1}$ Riemannian metric with width $4\pi$}\label{fig:fig1}
\end{figure}
This defines a $C^{1,1}$ Riemannian metric $\bar g$ on $\S^3$ whose sectional 
curvature is bounded above by $1$ in any reasonable weak sense. Its Simon-Smith 
width is $4\pi$ so this implies that the above result is false for a weak sense 
of sectional curvature. Actually the above example is exactly the type of 
situation we have to consider in the proof of the above theorem: we prove that 
the Simon-Smith width is realized by a minimal $2$-sphere which is not almost 
stable. Moreover $\bar g$ can be smoothed to produce 
sequences $(g_n)$ of smooth Riemannian metrics with sectional curvature bounded 
above by $1$, $W(\S^3,g_n)\to 4\pi$ and $g_n\to \bar g$. So the $g_n$ are far 
of the round metric of $\S^3_1$. This implies that the above rigidity result is 
not stable.

Let us also notice studying the Simon-Smith width for reversed curvature 
inequalities has been done by F.~C.~Marques and A.~Neves. This time just a 
control on the Ricci and scalar curvature is 
assumed.
\begin{thm*}[Marques, Neves \cite{MaNe3}]
Let $(\S^3,g)$ be a Riemannian $3$-sphere with positive Ricci curvature and 
scalar curvature $R\ge 6$. Then $W(\S^3,g)\le 4\pi$ and, if $W(\S^3,g)=4\pi$, 
then $(\S^3,g)$ is isometric to $\S_1^3$.
\end{thm*}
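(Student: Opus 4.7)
The plan is to combine Simon-Smith min-max theory with a Hersch-type balancing argument and the Gauss equation in a $3$-manifold, following the template developed by Marques and Neves in their rigidity work. Min-max theory realizes $W(\S^3,g)$ as the mass of a stationary integral varifold $\sum_i n_i\Sigma_i$ supported on finitely many smooth embedded minimal $2$-spheres with total index at most $1$. The hypothesis $\ric>0$ forbids stable minimal surfaces (test $f\equiv 1$ in the second variation: the inequality $-\int(|A|^2+\ric(\nu))\ge 0$ is impossible), so exactly one $\Sigma_i$ appears, of index exactly $1$, and two-sidedness in the simply-connected ambient gives multiplicity $n_i=1$. Call this sphere $\Sigma$, so that $W(\S^3,g)=|\Sigma|$.

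Next I would estimate $|\Sigma|$. Parametrize $\Sigma$ conformally by $\phi:\S_1^2\to\Sigma$ and, after pre-composing with a M\"obius transformation supplied by Hersch's lemma, arrange each coordinate $f_i=x_i\circ\phi$ to be $L^2(\Sigma)$-orthogonal to the first (negative) eigenfunction of the Jacobi operator $J=\Delta+|A|^2+\ric(\nu)$. Since $\Sigma$ has index $1$, this gives $Q(f_i)\ge 0$ for $i=1,2,3$, and summing, using the conformal invariance of the Dirichlet energy in dimension two together with $\sum_i x_i^2\equiv 1$, yields
\[
\int_\Sigma\bigl(|A|^2+\ric(\nu)\bigr)\le 8\pi.
\]
The Gauss equation for a minimal surface in a $3$-manifold reads $K_\Sigma=K_M(T\Sigma)-|A|^2/2$, and the scalar curvature decomposition reads $K_M(T\Sigma)=R/2-\ric(\nu)$. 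Integrating and invoking Gauss-Bonnet,
\[
\int_\Sigma \frac{R}{2} \;=\; 4\pi + \int_\Sigma\Bigl(\ric(\nu)+\tfrac{|A|^2}{2}\Bigr) \;\le\; 12\pi.
\]
Combined with $R\ge 6$ this forces $3|\Sigma|\le 12\pi$, hence $|\Sigma|\le 4\pi$ and $W(\S^3,g)\le 4\pi$.

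For the rigidity, assume $W(\S^3,g)=4\pi$. Every inequality saturates: $R\equiv 6$ on $\Sigma$, $|A|\equiv 0$ (so $\Sigma$ is totally geodesic), and each $f_i$ lies in $\ker J$. From $\sum_i f_i^2\equiv 1$ one reads off $\ric(\nu)=\sum_i|\nabla f_i|^2$, and comparing $|\Sigma|=4\pi=|\S_1^2|$ forces $\phi$ to be an isometry, $\ric(\nu)\equiv 2$ on $\Sigma$, and the ambient sectional curvature along $T\Sigma$ to equal $1$. To upgrade this to a global isometry, I would foliate each connected component of $\S^3\setminus\Sigma$ by the equidistant surfaces $\Sigma_t$ and examine the Riccati equation satisfied by their second fundamental form. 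The bounds $R\ge 6$ and $\ric\ge 0$ force each $\Sigma_t$ to be a totally umbilical round sphere with the principal curvature of the model; integrating out to the focal point reconstructs one hemisphere of $\S_1^3$, and symmetry gives the full round $3$-sphere.

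The main obstacle is this last globalization step. The Hersch/Gauss-Bonnet argument pins down the geometry only along $\Sigma$, and propagating the saturation through the equidistant foliation---in particular at the focal points of $\Sigma$, where the foliation degenerates and the two sides must glue smoothly---is where the delicate analysis lies.
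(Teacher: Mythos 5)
This theorem is not proved in the paper you are reviewing: it is quoted as a known result of Marques and Neves \cite{MaNe3}, stated only to contrast with the paper's own reversed-inequality rigidity theorem for $K\le 1$. There is therefore no in-paper proof to compare your proposal against; the comparison would have to be made with the original reference.

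For what it is worth, your outline captures the estimates one finds in the literature, and the arithmetic is sound: $\ric>0$ rules out stable minimal surfaces; the Hersch balancing against the first Jacobi eigenfunction together with conformal invariance and $\sum_i f_i^2\equiv 1$ gives $\int_\Sigma(|A|^2+\ric(\nu))\le 8\pi$ for an index-one sphere; and the Gauss equation, the trace identity $K_M(T\Sigma)=R/2-\ric(\nu)$, and Gauss--Bonnet then yield $|\Sigma|\le 4\pi$ under $R\ge 6$. The rigidity globalization, which you correctly flag as the hard step, is where the sketch has a genuine gap. The equality analysis only pins down $R$, $\ric(\nu)$, $|A|$, and the sectional curvature of $M$ \emph{along} $\Sigma$; it gives no control off $\Sigma$, so a Riccati comparison along equidistant surfaces cannot run --- you would need ambient curvature information along each $\Sigma_t$, which $\ric>0$ and $R\ge 6$ alone do not supply. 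What the saturated Hersch inequality actually buys is that the balanced coordinate functions $f_i$ become non-trivial elements of $\ker J$, i.e.\ Jacobi fields; the argument in \cite{MaNe3} then has to integrate those infinitesimal deformations into a genuine family of minimal spheres before the round metric can be reconstructed, which is a different and more delicate mechanism than the equidistant-tube heuristic. To make your proposal complete you should replace that last paragraph with such a Jacobi-field integration argument and consult \cite{MaNe3} for the precise route.
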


The paper is organized as follows. In Section~\ref{sec:geometry}, we recall 
some basic formulas and definitions of submanifold geometry.
Section~\ref{sec:mcf} is devoted to the construction of ancient solutions of 
the mean curvature flow (Theorem~\ref{thm:ancientmcf}). In 
Section~\ref{sec:rigidity}, we prove our first rigidity result 
(Theorem~\ref{th:rigidity}) and its Corollary concerning manifolds containing 
an equator of $\S_1^n$. Section~\ref{sec:minmax} is devoted to the study of the 
Simon-Smith width and the proof of the second rigidity result 
(Theorem~\ref{th:rigidityminmax}). In Appendix~\ref{app:schauder}, we prove a 
Schauder type estimate used in the proof of Theorem~\ref{thm:ancientmcf}.

\bigskip

The author would like to thank C. Mantoulidis for discussions about his result 
and A.~Petrunin for pointing him out reference \cite{PaPe}.

\section{Geometry of submanifolds}\label{sec:geometry}

In this section we recall some classical notations and formulas concerning the 
geometry of submanifolds.

Let $(M,g)$ be a Riemannian manifold of dimension $n$ and $\Sigma$ a manifold 
of dimension $m$. If $F_0:\Sigma\to 
M$ is an immersion, we can consider the induced Riemannian metric $g_0=F_0^*g$ 
on 
$\Sigma$ making $F_0$ a local isometry. In the paper, we often identify 
$\Sigma$ 
with its image $\Sigma_0=F_0(\Sigma)$ at least locally where $F_0$ is an 
embedding: for example, we often identify $T_p\Sigma$ with 
$(F_0)_*(T_p\Sigma)\subset 
T_{F_0(p)}M$.   

If $\nabla$ and $\nabla^0$ are respectively the Levi-Civita connections on $M$ 
and $\Sigma$, we can define the second fundamental form on $\Sigma$ by 
\[
B_p(v,w)=\nabla_vw-\nabla^0_vw \in N_p\Sigma
\]
where $v,w\in T_p\Sigma$ and $N_p\Sigma$ is the normal subspace to $\Sigma$ at 
$p$. 

The mean curvature vector of $\Sigma$ is then 
\[
\vec H(p)=\frac1m \tr_{T_p\Sigma}B_p\in N_p\Sigma
\]
where $\tr_P$ denotes the trace operator on the subspace $P$.
We define $\inter B_p=B_p-\vec H(p) g_0$ the traceless part of the second 
fundamental form. We recall that the normal bundle $N\Sigma$ inherits from $g$ 
and $\nabla$ a 
normal connection $\nabla^\perp$. 

Let $(F_t)_t$ be a smooth family of immersion of $\Sigma$ and define the 
vectorfield $X=\frac{d}{dt}{\phi_t}_{|t=0}$ along $F_0(\Sigma)$ and let 
$\Sigma_t=F_t(\Sigma)$. We have a 
family of metrics $g_t=F_t^*g$ defined on $\Sigma$ with associated 
volume 
measure 
$d\sigma_t$. If, $X$ is orthogonal to $\Sigma$, it is well 
known that, for any function $f$ on $\Sigma$:
\[
\frac{d}{dt}_{|t=0}\int_\Sigma f d\sigma_{g_t}=-\int_\Sigma (X,m\vec H)f 
d\sigma_0
\]
So if $\Sigma$ is critical with respect to the $m$-volume functional $\boA$, we 
have 
$\vec 
H=0$ along $\Sigma$: $\Sigma$ is minimal.

We are interested in understanding how the mean curvature vector $\vec H$ is 
deformed along the family $F_t$. So let us denote by $\vec H_t(p)$ the mean 
curvature vector of $\Sigma_t$ at $F_t(p)$.  

\begin{lem}
If $X$ is normal to $\Sigma$, we have
\[
\frac{D}{dt}{m\vec H_t}_{|t=0}=\Delta^\perp X+(R(e_i,X)e_i)^\perp + 
(X,B(e_i,e_j))B(e_i,e_j)-(m\vec H_0, \nabla_{e_i}X)e_i
\]
with the convention that summations are made over repeated indices, 
$(e_1,\dots,e_m)$ is an orthonormal frame of $\Sigma_0$, $R$ is the Riemann 
curvature tensor associated to $g$ and $\Delta^\perp$ 
denotes the Laplacian operator acting on normal sections: $\Delta^\perp 
X=\tr {\nabla^\perp}^2 X$.  
\end{lem}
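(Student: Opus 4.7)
My plan is a direct computation that splits $\frac{D}{dt}(m\vec H_t)|_0$ into its tangential and normal components. For the tangential part, choose a smoothly varying orthonormal normal frame $(n^\alpha_t)$ along $\Sigma_t$ and write $m\vec H_t=h^\alpha_t n^\alpha_t$. Differentiating $g(n^\alpha_t,e_i^t)=0$ in $t$ and using $[X,\partial_i]=0$ (where $\partial_i=(F_t)_*\partial_{x^i}$ for local coordinates and $\partial_i|_0=e_i$ at the evaluation point) gives the rotation identity $g(\nabla_X n^\alpha_t,e_i)|_0=-g(n^\alpha_0,\nabla_{e_i}X)$. Multiplying by $h^\alpha_0$ and summing over $\alpha$ immediately yields the tangential contribution $-(m\vec H_0,\nabla_{e_i}X)e_i$ of the statement.

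For the normal part, keep the same coordinate frame $\partial_i=(F_t)_*\partial_{x^i}$ so that $\nabla_X\partial_i=\nabla_{\partial_i}X$. Write
\[
m\vec H_t=g_t^{ij}(\nabla_{\partial_i}\partial_j)^{\perp_t}
\]
and arrange at the evaluation point that $(\partial_1,\dots,\partial_m)|_0=(e_1,\dots,e_m)$ is orthonormal with vanishing $g_0$-Christoffel symbols. Applying $\nabla_X$ produces three contributions: the variation of the inverse metric $\partial_t g_t^{ij}|_0$, computed from $\partial_t g_t(\partial_i,\partial_j)=-2(X,B(e_i,e_j))$ (using $X\perp\Sigma$) and contributing $2(X,B(e_i,e_j))B(e_i,e_j)$; the variation of the ambient Hessian, handled by the Ricci commutation $\nabla_X\nabla_{\partial_i}\partial_j=\nabla_{\partial_i}\nabla_{\partial_j}X+R(X,\partial_i)\partial_j$; and the variation of the projection $\perp_t$ itself, coming from $(\nabla_{\partial_i}\partial_j)^{\perp_t}=\nabla_{\partial_i}\partial_j-\Gamma^k_{ij}(g_t)\partial_k$, whose time derivative absorbs the Christoffel-symbol corrections that would otherwise spoil the intrinsic normal connection Laplacian.

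Using the Gauss-Weingarten decomposition $\nabla_{\partial_j}X=\nabla^\perp_{\partial_j}X-A_X\partial_j$ with $g(A_X\partial_j,\partial_k)=(X,B(\partial_j,\partial_k))$, the normal part of $g^{ij}\nabla_{\partial_i}\nabla_{\partial_j}X$ splits as $\Delta^\perp X-(X,B(e_i,e_j))B(e_i,e_j)$, and the curvature piece projects to $(R(e_i,X)e_i)^\perp$ in the paper's convention. Summing these with the metric variation contribution $+2(X,B(e_i,e_j))B(e_i,e_j)$ produces $\Delta^\perp X+(R(e_i,X)e_i)^\perp+(X,B(e_i,e_j))B(e_i,e_j)$, matching the normal part of the statement. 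The main obstacle is pure bookkeeping: separating normal and tangential pieces at each step, controlling the $t$-dependence of $\Gamma^k_{ij}(g_t)$ that enters through the moving projection $\perp_t$, and verifying that the two shape-operator contractions combine to give exactly the single coefficient $(X,B(e_i,e_j))B(e_i,e_j)$ claimed. No identity beyond the Ricci commutation and the Gauss-Weingarten decomposition of $\nabla X$ along $\Sigma$ is required.
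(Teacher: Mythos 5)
Your proposal is correct and, at its core, follows the same route as the paper: both compute $\frac{D}{dt}\big[g^{ij}(\nabla_{e_i}e_j)^\perp\big]$ by splitting into the variation of $g^{ij}$, the Ricci commutation of $\nabla_X$ past $\nabla_{e_i}$, and the variation of the normal projection, and both use the Gauss--Weingarten splitting of $\nabla_{e_i}X$ to identify $\Delta^\perp X - (X,B(e_i,e_j))B(e_i,e_j)$. The only genuine variation is organizational: you peel off the tangential component first via the rotation identity $g(\nabla_X n^\alpha,e_i)=-g(n^\alpha,\nabla_{e_i}X)$ for a moving orthonormal normal frame, whereas the paper extracts the tangential correction directly from the time-derivative of the projection formula $Y^\perp=Y-g^{ij}(Y,e_j)e_i$; this is a minor bookkeeping preference rather than a different argument.
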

\begin{proof}
Let $E_1,\dots,E_m$ be an orthonormal frame on $(\Sigma,g_0)$ and consider 
along $F_t(p)$ the frame $e_i=(F_t)_*(E_i)$. We assume that 
$\nabla^0_{e_i}e_j=0$ for any $i,j$ at $\bar p$ where the computation is made. 
Let us denote $g_{ij}=(e_i,e_j)$ and 
$(g^{ij})$ the inverse matrix. We have
\[
mH_t= g^{ij}(\nabla_{e_i}e_j)^\perp
\]
where $Y^\perp$ denotes the orthogonal projection to $N\Sigma_t$. At $t=0$, we 
have $\frac{D}{dt}e_i=\nabla_{e_i}X$.

Notice that ${g_{ij}}_{|t=0}=\delta_i^j$, so at $t=0$:
\begin{align*}
\frac{d}{dt}g^{ij}=-\frac{d}{dt}g_{ij}&=-(\frac{D}{dt}e_i,e_j)- 
(e_i,\frac{D}{dt}e_j)\\
&=-(\nabla_{e_i}X,e_j)-(e_i,\nabla_{e_j}X)
=2(X,B(e_i,e_j))
\end{align*}

If $Y$ is a vector field along $F_t(p)$. We have 
$Y^\perp=Y-g^{ij}(Y,e_j)e_i$, 
so at $t=0$:
\begin{align*}
\frac{D}{dt}Y^\perp&=\left(\frac{D}{dt}Y\right)^\perp +((\nabla_{e_i}X,e_j)+ 
(e_i,\nabla_{e_j}X)) (Y,e_j)e_i\\
&\qquad-(Y,\frac{D}{dt}e_i)e_i-(Y,e_i)\frac{D}{dt}e_i\\
&=\left(\frac{D}{dt}Y\right)^\perp 
+(\nabla_{e_i}X,Y^\top)e_i+(Y,e_j)(\nabla_{e_j}X)^\top\\
&\qquad-(Y,\nabla_{e_i} X)e_i-(Y,e_i)\nabla_{e_i}X\\
&=\left(\frac{D}{dt}Y\right)^\perp-\sum_i(Y^\perp,\nabla_{e_i}X)e_i-(Y,e_i)(\nabla_{e_i}X)^\perp
\end{align*}
where $Z^\top$ denotes the tangential part of $Z$.

We also have
\begin{align*}
\frac{D}{dt}\nabla_{e_i}e_i&=R(e_i,X)e_i+\nabla_{e_i}\frac{D}{dt}e_i\\
&=R(e_i,X)e_i+\nabla_{e_i}\nabla_{e_i}X\\
\end{align*}
So combining all the above computations at $\bar p$, we obtain
\[
\frac{D}{dt}mH_t=2(X,B(e_i,e_j))B(e_i,e_j)+(R(e_i,X)e_i)^\perp+ 
(\nabla_{e_i}\nabla_{e_i}X)^\perp-(mH_0,\nabla_{e_i}X)e_i
\]
Since $ (\nabla_{e_i}\nabla_{e_i}X)^\perp= 
\nabla_{e_i}^\perp\nabla_{e_i}^\perp X-(X,B(e_i,e_j))B(e_i,e_j)$ we 
finally have
\[
\frac{D}{dt}mH_t=\Delta^\perp 
X+(X,B(e_i,e_j))B(e_i,e_j)+(R(e_i,X)e_i)^\perp-(mH_0,\nabla_{e_i}X)e_i
\]
\end{proof}

As a consequence, if $\Sigma_0$ is minimal, the second derivative of the 
$m$-volume of $\Sigma_t=F_t(\Sigma)$ is given by
\begin{align*}
\frac{d^2}{dt^2}\boA(\Sigma_t)_{|t=0}&=-\int_\Sigma(X,\Delta^\perp 
X+(X,B(e_i,e_j))B(e_i,e_j)+(R(e_i,X)e_i))d\sigma_0\\
&=\int_\Sigma\|\nabla^\perp X\|^2-(R(e_i,X)e_i,X)-(X,B(e_i,e_j)^2)d\sigma_0^2\\
&=Q_\Sigma(X,X)
\end{align*}
So $Q_\Sigma$ is a quadratic form acting on sections of the normal bundle 
$N\Sigma$. It is attached to the Jacobi operator acting on normal sections:
\[
LX=\Delta^\perp X +(R(e_i,X)e_i)^\perp+(X,B(e_i,e_j))B(e_i,e_j)
\]

This operator is elliptic and self-adjoint. It has a spectrum $\lambda_0\le 
\lambda_1\le \cdots$. If $\lambda_0<0$, $\Sigma$ is called unstable. The index 
of $L$ (the number of negative eigenvalues) is called the index of $\Sigma$.

\section{Ancient solutions of the mean curvature flow}\label{sec:mcf}

\subsection{The mean curvature flow}
First let us recall some basics of the mean curvature flow and state our main 
result. For a good introduction to the high co-dimension case, one can have a 
look to Smoczyk's paper \cite{Smo}.

Let $(M,g)$ be a Riemannian manifold and $\Sigma$ a $m$-manifold.  Let $F: 
\Sigma\times I\to M$ ($I$ an interval) be a smooth map such that 
$F_t=F(\cdot,t)$ is an immersion for any $t$. We say that $F_t(\Sigma)$ evolve 
by mean curvature flow if for any $p\in \Sigma$ and $t\in I$
\[\label{eq:mcf}\tag{MCF}
\frac{dF}{dt}(p,t)=m\vec{H}(p,t)
\]
where $\vec{H}\in T_{F_t(p)}M$ is the mean curvature vector of $F_t(\Sigma)$ at 
$F_t(p)$. 

For example, if $F_0(\Sigma)$ is a minimal submanifold, then $F_t=F_0$ for 
$t\in I$ is a solution of the mean curvature flow: minimal submanifolds are 
fixed points of the mean curvature flow.

Our aim is to produce solutions that flow out of a minimal 
surface. More precisely, we construct non constant ancient solutions of the 
mean 
curvature flow (\textit{i.e.} defined on a time interval $(-\infty,b)$) such 
that, as 
$t\to-\infty$, $F_t(\Sigma)$ converges to a minimal surface.

It is well known that one difficulty in the solvability of \eqref{eq:mcf} is 
the invariance under the diffeomorphism group which causes a lack of 
parabolicity of the system. One solution to this difficulty consists in adding 
a tangential component to the time derivative of $F$ which has no impact on the 
geometric evolution.

Let us explain such a solution. Let $\Sigma$ be an immersed closed 
submanifold in $M$. Let $N\Sigma$ denote 
the normal bundle to $\Sigma$. Then we can consider the map 
\[
\Phi : \begin{array}{ccc}
N\Sigma &\longrightarrow& M\\
(p,v)&\longmapsto& \exp_p(v)
\end{array}
\]
For $\eps>0$, let us denote $N\Sigma^\eps=\{(p,v)\in N\Sigma\mid \|v\|<\eps\}$. 
If $\eps$ is small enough, the restriction of $\Phi$ to $N\Sigma^\eps$ is an 
immersion so the metric $g$ can be lifted to $h=\Phi^* g$ on $N\Sigma^\eps$. 
Now studying immersed submanifolds close to $\Sigma$ consists in looking at 
sections of $N\Sigma^\eps$ close to $0$. Actually one can extend the Riemannian 
metric $h$ to the whole $N\Sigma$ and just look at sections close to $0$. 

So the general setting we have to consider is the following. Let $E$ be a 
vector bundle over a closed manifold $\Sigma$ and consider $g$ a 
Riemannian metric on the manifold $E$. We say that $E$ is a normal bundle if 
the fibers are orthogonal to $\Sigma_0$ the image of the $0$ 
section. If $E$ is a normal bundle there is a natural identification between 
$E$ and the normal bundle to $\Sigma_0$. So as a normal bundle, $E$ inherits a 
bundle metric $g^\perp$ and a connection $\nabla^\perp$.

If $U$ is a section 
of $E$, $U(\Sigma)$ is a submanifold in $E$. Then sections are a 
particular way to parametrize submanifolds in $E$. Let $U$ be a section of 
$E$ and 
$p\in\Sigma$. Since $U$ is a section, the tangent space $T_{U(p)}E$ 
splits as $T_pU\oplus T_{U(p)}E_p$ where $E_p$ is the fiber of 
$E$ over $p$. Moreover there is a natural identification of 
$T_{U(p)}E_p$ with $E_p$. So for any $Y\in T_{U(p)}E$, one 
can define $Y^\sharp$ the projection of $Y$ to $E_p$ parallel to $T_pU$.

With this type of notation, we can define the bundle mean 
curvature flow in the following way: let $U:\Sigma\times I\to E$ a 
smooth map such that $U_t=U(\cdot,t)$ is a section of $E$, we say 
that $(U_t)_{t\in I}$ evolves by bundle mean curvature flow if for any $p\in 
\Sigma$ and $t\in I$ 
\begin{equation}\label{eq:bmcf}
\frac{dU}{dt}(p,t)=(m\vec H(U_t,p))^\sharp
\end{equation}
where $\vec{H}\in T_{U_t(p)}E$ is the mean curvature vector of the 
graph of $U_t$ at $U_t(p)$.

$(\vec H(U_t,p))^\sharp$ is equal to $\vec 
H(U_t,p)$ plus a tangent vector to $U_t(\Sigma)$. So solutions to 
\eqref{eq:bmcf} give rise to 
solutions to the mean curvature flow \eqref{eq:mcf} after a reparametrization.

Let us define the operator $\Hbf: \Gamma (E)\to \Gamma (E)$ by 
$\Hbf(U)(p)=(m\vec H(U_t,p))^\sharp$. $\Hbf$ is a smooth quasilinear elliptic 
differential operator of order $2$.

Let us assume $E$ is a normal bundle and $\Sigma_0$ is minimal, 
\textit{i.e.} $\Hbf(0)=0$. We can 
compute the differential of $\Hbf$ with respect to $U$ at $0$, this gives
\[
D\Hbf(0)(V)=\Lbf(V)=\Delta^\perp V + (R(e_i,V)e_i)^\perp+ 
(V,B(e_i,e_j))B(e_i,e_j)
\]
which is an elliptic self-adjoint operator on $\Gamma(E)$. So $\Lbf$ has 
a discrete spectrum $\lambda_0\le \lambda_1 \le \cdots$. Let us notice that 
$\Sigma_0$ is unstable if $\lambda_0<0$. So the main theorem of the section is 
the following

\begin{thm}\label{thm:ancientmcf}
Let $E\to\Sigma$ be as above. Assume that the first eigenvalue 
$\lambda_0$ of $\Lbf$ is negative. Then for any section $V$ in the first 
eigenspace, there is $U$ an ancient solution of \eqref{eq:bmcf} defined on 
$(-\infty, b)$ such that 
\[
\lim_{t\to-\infty}e^{\lambda_0 t}U_t=V
\]
\end{thm}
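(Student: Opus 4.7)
The plan is a perturbative construction in the spirit of Choi--Mantoulidis~\cite{ChMa}. Writing $\Hbf(U) = \Lbf U + N(U)$, where $N$ is a quasilinear second-order nonlinearity vanishing to second order at $U=0$, equation \eqref{eq:bmcf} takes the form
\[
(\partial_t - \Lbf) U = N(U).
\]
The explicit linearized ancient solution $\psi(t)$ in the direction of $V$ decays exponentially to $0$ as $t \to -\infty$ at the rate prescribed by the theorem's conclusion, and solves the homogeneous equation $(\partial_t - \Lbf)\psi = 0$. I would seek the true solution as $U = \psi + \phi$ with a higher-order correction $\phi$ obeying
\[
(\partial_t - \Lbf)\phi = N(\psi + \phi),
\]
expecting heuristically $|\phi| = O(|\psi|^2)$ since $N$ is quadratic.

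To produce $\phi$, I would fix $T$ very negative and set up a Banach fixed point on a weighted parabolic Hölder space of sections on $(-\infty, T]$, with norm
\[
\|\phi\|_* := \sup_{t \le T} e^{-\alpha t}\, \|\phi(\cdot, t)\|_{C^{2,\gamma}(\Sigma)},
\]
the weight $\alpha$ being chosen strictly between the decay rates of $\psi$ and of $\psi^2$. A linear solution operator $\mathcal T$ for $\partial_t - \Lbf$ in this weighted setting is constructed by combining Duhamel's principle with the spectral decomposition of $\Lbf$: one integrates the finite-dimensional unstable spectral modes from $-\infty$, killing the component along $V$ to avoid re-introducing the leading order, and the remaining modes from $T$ with zero initial data. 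The weighted Schauder estimate of Appendix~\ref{app:schauder} then bounds $\mathcal T$ between appropriate weighted Hölder spaces by a constant independent of $T$. Since $N$ is quadratic, the equation $\phi = \mathcal T(N(\psi+\phi))$ becomes a contraction on a small ball once $T$ is sufficiently negative, producing the desired correction $\phi$.

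By construction $U = \psi + \phi$ is an ancient solution of \eqref{eq:bmcf} on $(-\infty, T]$ with the asymptotic $e^{\lambda_0 t} U_t \to V$; a standard short-time existence argument for the quasilinear parabolic system \eqref{eq:bmcf} extends $U$ to an interval $(-\infty, b)$. The main technical obstacle is the weighted Schauder estimate on the unbounded interval $(-\infty, T]$: off-the-shelf parabolic Schauder bounds apply only on bounded time windows, so one must patch interior estimates on translated windows using time-translation invariance, and combine them mode-by-mode via the spectral decomposition of $\Lbf$ to produce the correct exponential control. Choosing the weight $\alpha$ and a spectral decomposition that is compatible with the quasilinear iteration is the most delicate step of the construction.
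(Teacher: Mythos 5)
Your proposal is correct and would yield a proof, but it organizes the construction differently from the paper. The paper runs an a posteriori compactness argument: for each very negative $a$ it solves the forward initial value problem for \eqref{eq:bmcf} with data $U^{(a)}(\cdot,a)=e^{-\lambda_0 a}V$, sets $Z^{(a)}=U^{(a)}-e^{-\lambda_0 t}V$, and proves (Lemma~\ref{lem:estim}) that $U^{(a)}$ persists on a time interval $[a,b_0]$ with $b_0$ independent of $a$, together with the weighted $L^2$ bound $\|Z^{(a)}\|_{L^2(E_{[a,b]})}\le e^{-3\lambda_0 b/2}$; the time-uniform Schauder estimate (Proposition~\ref{prop:schauder}) upgrades this to $C^{2,\alpha}$ bounds, and Arzel\`a--Ascoli extracts a subsequential limit as $a\to-\infty$. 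Crucially, the paper needs only the crude Duhamel bound $\|e^{\tau\Lbf}\|_{L^2\to L^2}\le e^{-\lambda_0\tau}$ (because $Z^{(a)}$ vanishes at $t=a$, Duhamel starts at the finite time $a$, and no projection or spectral splitting is required). Your approach is an a priori fixed-point construction directly on $(-\infty,T]$, closer to Choi--Mantoulidis: decompose $U=\psi+\phi$, build a weighted solution operator $\mathcal T$ for $\partial_t-\Lbf$ on the semi-infinite interval by Duhamel plus a spectral splitting, and run a contraction for $\phi$. This buys conceptual clarity and uniqueness in the weighted class for free, and avoids the diagonal extraction, but it requires constructing $\mathcal T$ on an unbounded time domain with mode-by-mode control, which is precisely the machinery the paper's finite-time/compactness scheme is designed to sidestep. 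Both rely on the same scaling heuristic --- the remainder weight is chosen strictly between the decay rates of $\psi$ and $\psi^2$ (the paper fixes $3\lambda_0/2$; you leave $\alpha$ free in that window) --- and both use the time-independent Schauder estimate from Appendix~\ref{app:schauder} as the analytic backbone. One small remark: since all eigenvalues of $\Lbf$ are $\le -\lambda_0$ and your weight $\alpha$ exceeds $-\lambda_0$, the Duhamel integral from $-\infty$ actually converges on every mode, so the ``kill the $V$-component'' normalization is not needed for convergence --- it only serves to pin down the ancient solution within the weighted class (the leading asymptotic is already $\psi$ because $\phi$ decays strictly faster). Your identified technical obstacle (patching the Schauder estimate over translated windows to get a $T$-independent constant) is genuine and is exactly what Proposition~\ref{prop:schauder} provides, so your route is sound.
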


One can compare this result with \cite[Theorem 1.6 and Theorem 3.3]{ChMa} by 
Choi and Mantoulidis. The main difference is that here the ancient solution is 
parametrized by its asymptotic as $t\to-\infty$ while Choi and Mantoulidis 
parametrized it by its value at time $t=0$.

\subsection{The functional spaces}

In order to prove the above result we need to introduce some functional spaces.
Following Solonnikov \cite{Slk}, we recall the definition of the H\"older 
spaces.

Let $\Ome\subset \R^m$ be a smooth domain and $P=\Ome\times[a,b]$. Then for $u: 
P\to \R^N$ and $\beta\in(0,1)$, we define the H\"older semi-norms
\begin{gather*}
[u]_{\beta,P,x}=\sup_{(x,t)\neq(y,t)\in P}\frac{|u(x,t)-u(y,t)|}{|x-y|^\beta}\\
[u]_{\beta,P,t}=\sup_{(x,t)\neq(x,s)\in P}\frac{|u(x,t)-u(x,s)|}{|t-s|^\beta}
\end{gather*}
and the uniform norm
\[
\|u\|_{0,P}=\sup_{X\in P}|u(X)|
\]
For $\alpha\in(0,1)$ we define the combined H\"older semi-norms
\begin{gather*}
[u]_{2,\alpha,P,x}=[\pr_x^2u]_{\alpha,P,x}+[\pr_tu]_{\alpha,P,x}\\
[u]_{2,\alpha,P,t}=[\pr_xu]_{(1+\alpha)/2,P,t}+[\pr_x^2u]_{\alpha/2,P,t}
 +[\pr_t u]_{\alpha/2,P,t}
\end{gather*}
Finally we have the H\"older norms
\begin{gather*}
\|u\|_{0,\alpha,P}=\|u\|_{0,P}+[u]_{\alpha,P,x}+[u]_{\alpha/2,P,t}\\
\|u\|_{2,\alpha,P}=\sum_{i=0}^2\|\pr_x^iu\|_{0,P}+ \|\pr_t u\|_{0,P}+ 
[u]_{2,\alpha,P,x} 
+[u]_{2,\alpha,P,t}
\end{gather*}

When $u$ is defined on $\Ome$, $u$ does not depend on $t$ so all the terms 
corresponding to the $t$ variable disappear and we have the specific notations:
\begin{gather*}
|u|_{0,\alpha,\Ome}=\|u\|_{0,\Ome}+[u]_{\alpha,\Ome,x}\\
|u|_{2,\alpha,\Ome}= %\|u\|_{2,1,\alpha,\Ome_a}= 
\sum_{i=0}^2\|\pr_x^iu\|_{0,\Ome}+  
[u]_{2,\alpha,\Ome,x} 
\end{gather*}
We then have the associated H\"older spaces $C^{0,\alpha}(P)$, 
$C^{2,\alpha}(P)$, $C^{0,\alpha}(\Ome)$, $C^{2,\alpha}(\Ome)$ made of 
applications $u$ such that the above norms are well defined and finite.

This H\"older spaces can be analogously defined on a closed Riemannian manifold 
$(\Sigma,g)$ 
and for sections of a vector bundle $E$ over $\Sigma$ where $E$ is 
equipped with a bundle metric $h$ and a metric connection $\nb$. The 
vector bundle $E$ can 
be extended as a vector bundle (still denoted by $E$) over $\Sigma\times \R$. 
So if $P=\Sigma\times[a,b]$ and $U:P\to E$ is a section, we can define the 
H\"older semi-norms
\begin{gather*}
[U]_{\beta,E_{[a,b]},x}=\sup_{\substack{(x,t)\neq (y,t)\in 
P\\d_g(x,y)<i_g}}\frac{|U(x,t)-P_{y,x}U(y,t)|}{|x-y|^\beta}\\
[U]_{\beta,E_{[a,b]},t}=\sup_{(x,t)\neq(x,s)\in 
P}\frac{|U(x,t)-U(x,s)|}{|t-s|^\beta}
\end{gather*}
where $i_g$ denotes the injectivity radius of $\Sigma$ and $P_{y,x}$ is the 
parallel transport operator from $y$ to $x$. Once this is defined we can 
construct the H\"older norms similarly to the Euclidean case. The uniform norm:
\[
\|U\|_{0,E_{[a,b]}}=\sup_{X\in P}|U(X)|
\]
For $\alpha\in(0,1)$ we define the combined H\"older semi-norms
\begin{gather*}
[U]_{2,\alpha,E_{[a,b]},x}=[\nb_x^2U]_{\alpha,E_{[a,b]},x} 
+[\pr_tU]_{\alpha,E_{[a,b]},x}\\
[U]_{2,\alpha,E_{[a,b]},t}=[\nb_xU]_{(1+\alpha)/2,E_{[a,b]},t} 
+[\nb_x^2U]_{\alpha/2,E_{[a,b]},t} +[\pr_t U]_{\alpha/2,E_{[a,b]},t}
\end{gather*}
Finally we have the H\"older norms
\begin{gather*}
\|U\|_{0,\alpha,E_{[a,b]}}=\|U\|_{0,E_{[a,b]}}+[U]_{\alpha,E_{[a,b]},x} 
+[U]_{\alpha/2,E_{[a,b]},t}\\
\|U\|_{2,\alpha,E_{[a,b]}}=\sum_{i=0}^2\|\nb_x^iU\|_{0,E_{[a,b]}}+ \|\pr_t 
U\|_{0,E_{[a,b]}}+ [U]_{2,\alpha,E_{[a,b]},x} +[U]_{2,\alpha,E_{[a,b]},t}
\end{gather*}

When $U$ is defined on $\Sigma$,  we have the specific notations:
\begin{gather*}
|U|_{0,\alpha,E}=\|U\|_{0,\Sigma}+[U]_{\alpha,E,x}\\
|U|_{2,\alpha,E}= %\|u\|_{2,1,\alpha,\Ome_a}= 
\sum_{i=0}^2\|\pr_x^iU\|_{0,E}+  
[U]_{2,\alpha,E,x} 
\end{gather*}
We then have the associated H\"older spaces $C^{0,\alpha}(E_{[a,b]})$, 
$C^{2,\alpha}(E_{[a,b]})$, $C^{0,\alpha}(E)$, $C^{2,\alpha}(E)$. In 
the sequel we will also use the $L^2$ norms $\|\cdot\|_{L^2(E_{[a,b]})}$ and 
$|\cdot|_{L^2(E)}$. For  a section $U$ defined over $\Sigma\times \R$, we 
denote 
$U_t(\cdot)=U(\cdot,t)$.

\subsection{Linear operators}

If the fiber of $E$ has dimension $k$, sections of can locally be written has 
maps: $u=(u^a)_{1\le a\le k}:\Ome \to \R^k$. In the sequel, we consider 
families of linear 
differential operators of order $2$ acting on 
sections of $E$ which in coordinates takes the form
\begin{equation}\label{eq:operator}
(L_tu)^a=\sum_{|I|\le 2,b\le k}A_b^{aI}(x,t)\pr_Iu^b
\end{equation}
where $I$ denote a multi-index and $\pr_I$ is the partial derivative associated 
to $I$. $L_t$ will be elliptic in the following sense: there is a  
constant $\lambda>0$ such that for any $\xi=(\xi_1,\dots,\xi_n)$ and 
$v=(v_1,\dots,v_k)$ we have
\[
\sum_{i,j=1}^2A_b^{aij}\xi_i\xi_jv_bv_a\ge \lambda |\xi|^2|v|^2
\]

Moreover we say that $L_t$ has $C^\alpha$ coefficients if the functions 
$A_b^{aI}$ are in $C^{0,\alpha}$. We denote by $\Lambda$ the maximum of the 
$C^{0,\alpha}$ 
norms of these coefficients. 

An important result for us is the following Schauder estimate for solutions of 
parabolic systems associated to such operators $L_t$

\begin{thm}[{\cite[Theorem 4.11]{Slk}}]
Let $\Ome'\subset \Ome\subset \R^n$ be smooth bounded domains with 
$\barre{\Ome'}\subset \Ome$. Let $P=\Ome\times [0,T]$ and $P'=\Ome'\times 
[0,T]$. Let $L_t$ be elliptic differential operators of order $2$ as in 
\eqref{eq:operator} with $C^\alpha$ coefficients in $\barre \Ome$. Then 
there 
is a constant $C$ depending on $\Ome$, $\Ome'$, $\lambda$, $\Lambda$, $\alpha$ 
and $T$ 
such that for any $u\in C^{2,\alpha}(\barre\Omega,\R^k)$ and $f\in 
C^{0,\alpha}(\barre P,\R^k)$ satisfying $\pr_tu-L_tu=f$ we have
\[
\|u\|_{2,\alpha,P'}\le 
C(\|f\|_{0,\alpha,P}+|u_0|_{2,\alpha,\Ome}+\|u\|_{L^2(P)})
\]
where $u_0(\cdot)=u(\cdot,0)$.
\end{thm}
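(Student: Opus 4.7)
This is a parabolic Schauder estimate for a linear elliptic system with $C^{\alpha}$ coefficients, together with control of the initial trace, and the only feature beyond the standard Schauder estimate is that the tail norm on the right-hand side is $L^2$ rather than $C^0$. My plan is to first establish the analogous estimate with $\|u\|_{0,P}$ on the right, and then upgrade $C^0$ to $L^2$ by interpolation on a chain of intermediate domains.

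For the first step, the method is the classical freezing-coefficients argument. Cover $\barre{P'}$ by parabolic cylinders $Q_\rho(X_0) = B_\rho(x_0) \times (t_0-\rho^2,t_0)$ of small radius $\rho$ contained in $P$. On such a cylinder, the operator $L_t$ differs from its frozen counterpart $L^{X_0}$ (coefficients evaluated at $X_0$) by a second-order operator whose top-order coefficients have $C^0$-norm $\lesssim \rho^\alpha \Lambda$. For the constant-coefficient problem $\pr_t v - L^{X_0} v = g$, sharp $C^{2,\alpha}$ bounds follow from the matrix-valued fundamental solution of the parabolic system (whose Gaussian bounds rely only on $\lambda$ and on the dimension $k$ of the fibre) together with Calder\'on--Zygmund-type estimates for its derivatives. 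This yields a local bound of the form
\[
\|u\|_{2,\alpha,Q_{\rho/2}} \le C\bigl(\|f\|_{0,\alpha,Q_\rho} + \rho^\alpha\|u\|_{2,\alpha,Q_\rho} + \rho^{-2-\alpha}\|u\|_{0,Q_\rho}\bigr),
\]
and taking $\rho$ small absorbs the second term into the left-hand side. Patching through a finite covering of $\barre{P'}$ by such cylinders, and handling the initial slab $\{t\approx 0\}$ by subtracting off a heat-type extension of $u_0$ whose $C^{2,\alpha}$ norm is controlled by $|u_0|_{2,\alpha,\Ome}$, one arrives at the intermediate estimate
\[
\|u\|_{2,\alpha,P'} \le C\bigl(\|f\|_{0,\alpha,P} + |u_0|_{2,\alpha,\Ome} + \|u\|_{0,P}\bigr).
\]

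The remaining step is to replace $\|u\|_{0,P}$ by $\|u\|_{L^2(P)}$. The plan is to fix two intermediate domains $\Ome' \Subset \Ome'' \Subset \Ome$ and apply the previous inequality on $P'' = \Ome'' \times [0,T]$ inside $P$. Then a parabolic $L^\infty$--$L^2$ estimate, combined with the usual Schauder interpolation
\[
\|u\|_{0,P''} \le \eps\,[u]_{2,\alpha,P''} + C(\eps)\|u\|_{L^2(P)},
\]
allows one to absorb $\eps [u]_{2,\alpha}$ into the left-hand side and close the estimate. The main obstacle is the vector-valued, variable-coefficient nature of $L_t$: the constant-coefficient Calder\'on--Zygmund bounds for parabolic \emph{systems} (whose fundamental solutions are matrix-valued and admit no maximum principle) must be established with constants controlled only by $\lambda$, $\Lambda$ and $\alpha$, so that the covering and freezing steps close uniformly in $X_0 \in \barre{P'}$. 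Once that ingredient is in place, the rest of the argument is a careful but routine bookkeeping of norms.
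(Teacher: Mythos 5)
The paper cites this estimate directly from Solonnikov and does not reprove it, so there is no in-paper argument to compare against; your freezing-coefficients strategy is the standard one and parallels what the paper does in Appendix~\ref{app:schauder} for the related Lemma~\ref{lem:schauder}. The substantive gap is in the last absorption step. Your intermediate estimate carries $\|u\|_{0,P}$ on the right with $P$ the \emph{outer} domain, while the $C^{2,\alpha}$ control on the left lives only on the strictly smaller $P'$. The interpolation you invoke trades $\|u\|_{0,P''}$ for $\varepsilon\,[u]_{2,\alpha,P''}$, but this semi-norm sits on a domain strictly larger than $P'$ and therefore cannot simply be absorbed into the left-hand side; applying the first estimate again with $P''$ as the inner domain only pushes the $C^0$-tail out to yet another, larger $P'''$, and so on. Closing this chain genuinely requires either a distance-weighted H\"older semi-norm (weight a power of $\mathrm{dist}(x,\partial\Omega)$) so that both sides of the inequality live on the same domain, or an explicit iteration lemma along a shrinking sequence of nested cylinders. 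Notice the paper sidesteps precisely this issue in the appendix by working on all of $\mathbb{R}^n$ with compactly supported sections, where no domain mismatch arises; for the interior statement that device is unavailable, and the iteration cannot be waved off as routine bookkeeping.

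A second caution: the phrase ``parabolic $L^\infty$--$L^2$ estimate'' is dangerous in the system setting. If you mean a De Giorgi--Nash--Moser type bound that passes from $L^2$ to $L^\infty$ using only ellipticity of the principal symbol, that is false for systems with merely bounded coefficients. With $C^\alpha$ coefficients a parametrix does yield the pointwise bound, but that is the Schauder machinery itself, not an independent input, so invoking it as a black box makes the argument circular. It would be cleaner to drop the appeal to an $L^\infty$--$L^2$ estimate entirely and carry out the nested-domain iteration directly.
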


Similar estimates can also be found in Friedman's paper~\cite{Fri}.

Using finitely many local charts for a vector bundle $E\to \Sigma$ ($\Sigma$ is 
closed), we can obtain an equivalent version for operators acting on sections 
of $E$.

\begin{thm}[{\cite[Theorem 4.11]{Slk}}]\label{thm:schauder}
Let $E$ be a vector bundle over a closed manifold $\Sigma$. Let $L_t$ be 
elliptic differential operators of order $2$ as in 
\eqref{eq:operator} in any local charts with $C^\alpha$ coefficients. Then 
there is a 
constant $C$ such that, for any $U\in C^{2,\alpha}(E_{[0,T]})$ and $F\in 
C^{0,\alpha}(E_{[0,T]})$ satisfying $\pr_tU-L_tU=F$, we have
\[
\|U\|_{2,\alpha,E_{[0,T]}}\le 
C(\|F\|_{0,\alpha,E_{[0,T]}}+|U_0|_{2,\alpha,E}+\|U\|_{L^2(E_{[0,T]})})
\]
\end{thm}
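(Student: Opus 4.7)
The plan is to reduce the bundle statement to the Euclidean parabolic Schauder estimate already quoted from Solonnikov by a standard finite-covering argument. First I would cover the closed manifold $\Sigma$ by finitely many open sets $\Omega_1,\dots,\Omega_N$, each equipped with a smooth coordinate chart $\phi_i:\Omega_i\to\tilde\Omega_i\subset\R^n$ and a smooth local trivialization $\Psi_i:E|_{\Omega_i}\to\Omega_i\times\R^k$, and then pick precompact open subsets $\Omega_i'\Subset\Omega_i$ whose union still covers $\Sigma$. In each chart a section $U$ of $E_{[0,T]}$ is represented by $u^{(i)}=\Psi_i\circ U\circ\phi_i^{-1}:\tilde\Omega_i\times[0,T]\to\R^k$, and similarly for $F$ and $U_0$. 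Expressing $\nb$ and $L_t$ in the local frame only introduces zero- and first-order terms built from Christoffel symbols of $\nb$ and derivatives of the frame, so the equation $\pr_tU-L_tU=F$ becomes a system $\pr_tu^{(i)}-\tilde L_t^{(i)}u^{(i)}=f^{(i)}$ which is still uniformly parabolic with $C^\alpha$ coefficients; the ellipticity constant and coefficient bound depend only on the fixed geometry of $(\Sigma,g)$ and of $(E,h,\nb)$.

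Next I would apply the Euclidean Schauder estimate on each pair $(\phi_i(\Omega_i'),\tilde\Omega_i)$ to obtain
\[
\|u^{(i)}\|_{2,\alpha,\phi_i(\Omega_i')\times[0,T]}\le C_i\bigl(\|f^{(i)}\|_{0,\alpha,\tilde\Omega_i\times[0,T]}+|u^{(i)}_0|_{2,\alpha,\tilde\Omega_i}+\|u^{(i)}\|_{L^2(\tilde\Omega_i\times[0,T])}\bigr)
\]
for each $i$. Since the $\Omega_i'$ cover $\Sigma$, summing (or taking the maximum) over $i=1,\dots,N$ produces a global control of the coordinate-wise Hölder norms of $U$ on $\Sigma\times[0,T]$ in terms of the coordinate norms of $F$, $U_0$ and the $L^2$ norm of $U$.

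Finally I would verify the equivalence between the coordinate Hölder norms produced by the Euclidean estimate and the intrinsic Hölder norms defined via the metric and parallel transport $P_{y,x}$. In a fixed trivialization $\Psi_i$, parallel transport differs from the identity of the local frame by a smooth $GL_k$-valued map agreeing with $\id$ on the diagonal, so for $x,y$ close in $\Omega_i$ the quantity $|U(x)-P_{y,x}U(y)|$ is comparable to $|u^{(i)}(x)-u^{(i)}(y)|$, with constants depending only on the fixed geometry. The analogous comparisons for $\nb_xU$, $\nb_x^2U$ and the $t$-Hölder semi-norms follow from expressing covariant derivatives in the local frame: the difference with coordinate partial derivatives consists of strictly lower-order terms whose $C^\alpha$ norms are uniformly bounded. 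Absorbing these lower-order contributions into the $L^2$ and initial-data terms (at the price of enlarging $C$) yields the claimed global inequality. The main obstacle I expect is precisely this last bookkeeping step: one has to check that the intrinsic semi-norms $[\nb_x^2U]_{\alpha,E,x}$, $[\nb_xU]_{(1+\alpha)/2,E,t}$ and $[\pr_tU]_{\alpha/2,E,t}$ are controlled by their coordinate analogues together with genuinely lower-order quantities, so that nothing spoils the $C^{2,\alpha}$ scaling and the covering estimate closes cleanly; once this lemma is in hand, the rest is a routine partition-of-unity reduction to the Euclidean theorem.
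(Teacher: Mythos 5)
Your proposal is correct and follows exactly the route the paper indicates: the paper dispatches this theorem in a single sentence ("Using finitely many local charts for a vector bundle $E\to\Sigma$ ... we can obtain an equivalent version"), and your chart-by-chart reduction with the comparison of intrinsic and coordinate H\"older norms is the standard way to fill in that sentence. Nothing in your outline deviates from or adds to the paper's intended argument.
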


A consequence is the following solution to the Cauchy problem

\begin{thm}\label{thm:cauchy}
Let $E$ be a vector bundle over a closed manifold $\Sigma$. Let $L_t$ be 
elliptic differential operators of order $2$ as in 
\eqref{eq:operator}  in any local charts with $C^\alpha$ coefficients. Then, 
for any $U_0\in C^{2,\alpha}(E)$ and $F\in C^{0,\alpha}(E_{[0,T]})$, there is a 
unique $U\in C^{2,\alpha}(E_{[0,T]})$ such that
\[
\begin{cases}
\pr_t U-L_tU(\cdot,t)=F(\cdot,t)\\
U(\cdot,0)=U_0
\end{cases}
\]
\end{thm}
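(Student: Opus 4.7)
The plan is to deduce existence from the Schauder estimate (Theorem~\ref{thm:schauder}) via the method of continuity, and uniqueness from a standard parabolic energy estimate.

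For uniqueness, set $W := U_1 - U_2$ where $U_1, U_2$ are two solutions with the same data; then $W$ satisfies the homogeneous equation with $W(\cdot,0) \equiv 0$. Testing $\pr_t W - L_t W = 0$ against $W$ in $L^2(E)$ and integrating by parts on the principal part of $L_t$, the strong ellipticity of \eqref{eq:operator} (which is the Legendre--Hadamard condition) yields G\r{a}rding's inequality on the closed manifold $\Sigma$, and together with the $C^\alpha$ bounds on the lower-order coefficients one obtains
\[
\frac{d}{dt}\|W_t\|_{L^2}^2 \le C\,\|W_t\|_{L^2}^2.
\]
Gronwall's lemma then forces $W \equiv 0$.

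For existence, I would first reduce to $U_0 = 0$ by subtracting a time-independent $C^{2,\alpha}$ extension of the initial datum and absorbing the correction into the source term. Next, fix a reference operator $L_t^0$ of the form \eqref{eq:operator}, independent of $t$ and self-adjoint on $L^2(E)$ -- for instance the negative rough Laplacian $-\nb^*\nb$ of the metric connection on $E$ -- for which the Cauchy problem for $\pr_t - L_t^0$ with zero initial data is solvable in $C^{2,\alpha}(E_{[0,T]})$ via the classical heat semigroup and Duhamel's formula. Consider the homotopy $L_t^s := (1-s)L_t^0 + sL_t$, which stays of the form \eqref{eq:operator} with ellipticity and $C^\alpha$ coefficient bounds uniform in $s\in[0,1]$. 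Let $S \subset [0,1]$ be the set of $s$ for which the Cauchy problem with $L_t^s$ is solvable in the closed subspace
\[
C^{2,\alpha}_0(E_{[0,T]}) := \{U \in C^{2,\alpha}(E_{[0,T]}) : U(\cdot,0) = 0\}
\]
for every $F \in C^{0,\alpha}(E_{[0,T]})$. Then $0 \in S$. Openness of $S$ is a Neumann-series argument: whenever $s_0 \in S$, Theorem~\ref{thm:schauder} combined with the $L^2$-bound supplied by the energy estimate above shows that the linear map $\mathcal{P}^{s_0} : U \mapsto \pr_t U - L_t^{s_0} U$ has a bounded inverse $C^{0,\alpha} \to C^{2,\alpha}_0$, so the perturbation $\mathcal{P}^s - \mathcal{P}^{s_0} = (s_0 - s)(L_t^0 - L_t)$ is a small bounded operator for $s$ near $s_0$ and $\mathcal{P}^s$ remains invertible. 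Closedness follows because, for $s_n \to s$ with solutions $U^n$ for the same $F$, the uniform Schauder bound from Theorem~\ref{thm:schauder} and Arzel\`a--Ascoli furnish a $C^{2,\alpha'}$ subsequential limit $U$ ($\alpha' < \alpha$) solving the equation at $s$; the $C^{2,\alpha}$ regularity of $U$ is then recovered from Theorem~\ref{thm:schauder} applied at $s$. Hence $1 \in S$, which gives the desired solution.

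The main obstacle is the uniformity of the Schauder constant along the homotopy: this reduces to a uniform lower bound on the ellipticity constant and a uniform $C^\alpha$-norm bound for the coefficients of $L_t^s$, both of which are immediate from the convexity of the admissible class defined by \eqref{eq:operator}. A secondary technical point is the step from the pointwise Legendre--Hadamard condition in \eqref{eq:operator} to the integrated coercivity needed in the energy estimate, which is handled by G\r{a}rding's inequality on the closed Riemannian manifold $\Sigma$.
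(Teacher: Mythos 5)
The paper does not prove this theorem; it simply cites Huang \cite{Hua}, whose argument (following Solonnikov) constructs a fundamental solution by the parametrix method. Your proposal — method of continuity plus the Schauder a priori estimate of Theorem~\ref{thm:schauder} — is therefore a genuinely different route, and a perfectly standard one in principle. The existence skeleton (reduction to $U_0=0$, solvability for a model operator via the heat semigroup and Duhamel, openness by a Neumann series, closedness by Arzel\`a--Ascoli plus regularity bootstrap) is sound provided the a priori estimate can be freed of the $\|U\|_{L^2}$ term on the right of Theorem~\ref{thm:schauder}.

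The genuine gap is precisely in the step you invoke to supply that $L^2$ control and to obtain uniqueness. You write the energy identity by ``testing $\pr_t W - L_t W = 0$ against $W$ in $L^2$ and integrating by parts on the principal part of $L_t$.'' The operator $L_t$ is in non-divergence form, $A_b^{aij}(x,t)\pr_i\pr_j W^b$, and integrating by parts moves a derivative onto the coefficients $A_b^{aij}$. By hypothesis these are only $C^\alpha$ in $x$; that derivative does not exist, so the identity you write and the subsequent appeal to G\r{a}rding's inequality (which is an estimate for the divergence-form bilinear form $\int A^{ij}_{ab}\pr_iu^a\pr_ju^b$) do not apply directly. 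For scalar equations one escapes this through the maximum principle, but for systems there is no such substitute, so the obstruction is real. Two ways to repair it: (i) mollify the leading coefficients, so that for the regularized operator $L^{(\eps)}$ the energy inequality is legitimate, and observe that for a fixed $W\in C^{2,\alpha}$ the commutator contribution $\int W\cdot(L_t-L_t^{(\eps)})W$ is bounded by $\|A-A^{(\eps)}\|_{0}\,\|W\|_{L^2}\,\|\pr_x^2W\|_{L^2}\to 0$, so Gronwall still yields $W\equiv 0$; or (ii) bypass the energy estimate entirely and absorb the $L^2$ term in Theorem~\ref{thm:schauder} on a short time interval, using $\|U\|_{L^2(E_{[0,T_0]})}\le C\sqrt{T_0}\,\|U\|_{2,\alpha,E_{[0,T_0]}}$ for $T_0$ small, and then iterate in time slices $[kT_0,(k+1)T_0]$, feeding the endpoint value into the initial-data term of the next slice. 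Either fix is routine, but as written the key inequality is asserted in a setting where it does not hold.

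A minor remark: in the openness step your perturbation reads $\mathcal{P}^s-\mathcal{P}^{s_0}=(s_0-s)(L_t^0-L_t)$; since $L_t^s=(1-s)L_t^0+sL_t$ one has $\mathcal{P}^s-\mathcal{P}^{s_0}=(s_0-s)(L_t-L_t^0)$. The sign is immaterial for the Neumann-series argument, but worth correcting.
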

For a proof see \cite[Theorem 2.4]{Hua}.

In Theorem~\ref{thm:schauder}, the constant $C$ depends on the length $T$ of 
the time interval: actually it is uniformly bounded as $T\to 0$ but not as 
$T\to \infty$. However the proof can be adapted in order to obtain the 
following result where the constant is time independent. This is important for 
 our following arguments.

\begin{prop}\label{prop:schauder}
Let $E$ be a vector bundle over a closed manifold $\Sigma$. Let $L$ be 
a time independent elliptic differential operator of order $2$ as in 
\eqref{eq:operator} in any local charts with $C^\alpha$ coefficients . Then 
there is a 
constant $C$ (independent of $T$) such that for any $U\in 
C^{2,\alpha}(E_{[0,T]})$ and $F\in 
C^{0,\alpha}(E_{[0,T]})$ satisfying $\pr_tU-LU=F$ we have
\[
\|U\|_{2,\alpha,E_{[0,T]}}\le 
C(\|F\|_{0,\alpha,E_{[0,T]}}+|U_0|_{2,\alpha,E}+\|U\|_{L^2(E_{[0,T]})})
\]
\end{prop}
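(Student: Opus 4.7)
Since $L$ is time-independent, Theorem \ref{thm:schauder} applied on an interval $[a, b]$ yields a constant depending only on $b - a$: translation in $t$ changes neither the operator nor the Hölder norms. The plan is to apply Theorem \ref{thm:schauder} to suitable cutoffs of $U$ on overlapping intervals $[k-1, k+1]$ of fixed length $2$, obtaining estimates on $[k, k+1]$ with a constant uniform in $k$, and then combining these local estimates into a global bound.

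We may assume $T > 1$, since otherwise the conclusion is direct from Theorem \ref{thm:schauder}. Fix a smooth cutoff $\chi: \R \to [0, 1]$ with $\chi = 0$ on $(-\infty, 0]$ and $\chi = 1$ on $[1, \infty)$. For each integer $k \geq 1$ with $[k-1, k+1] \subset [0, T]$, set $V(x, t) = \chi(t - k + 1) U(x, t)$. Since $L$ is purely spatial, $V(\cdot, k-1) = 0$ and
$$\pr_t V - L V = \chi(\cdot - k + 1) F + \chi'(\cdot - k + 1) U.$$
Applying Theorem \ref{thm:schauder} on $[k-1, k+1]$ (with a constant $C_0$ independent of $k$ by translation invariance) and using $V = U$ on $[k, k+1]$ gives
$$\|U\|_{2, \alpha, E_{[k, k+1]}} \leq C_0 \bigl( \|F\|_{0, \alpha, E_{[k-1, k+1]}} + C_1 \|U\|_{0, \alpha, E_{[k-1, k+1]}} + \|U\|_{L^2(E_{[k-1, k+1]})} \bigr),$$
where $C_1$ depends only on $\chi$. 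For $k = 0$, one applies Theorem \ref{thm:schauder} directly on $[0, 1]$ with initial data $U_0$, picking up an additional $|U_0|_{2, \alpha, E}$ term.

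The main obstacle is the $\|U\|_{0, \alpha}$ term introduced by $\chi' U$: we absorb it via the standard interpolation inequality
$$\|U\|_{0, \alpha, E_{[k-1, k+1]}} \leq \eps \|U\|_{2, \alpha, E_{[k-1, k+1]}} + C_\eps \|U\|_{L^2(E_{[k-1, k+1]})},$$
with constants independent of $k$. Using subadditivity of $x \mapsto x^r$ for $r \in (0, 1)$ to control time Hölder semi-norms across $t = k$, one gets $\|U\|_{2, \alpha, E_{[k-1, k+1]}} \leq C_2 (A_{k-1} + A_k)$ with $A_k := \|U\|_{2, \alpha, E_{[k, k+1]}}$. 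Writing $B := \|F\|_{0, \alpha, E_{[0, T]}} + |U_0|_{2, \alpha, E} + \|U\|_{L^2(E_{[0, T]})}$, these combine into the recursion $A_k \leq K \eps (A_{k-1} + A_k) + C' B$ with $K$ independent of $k$ and $T$. Choosing $\eps$ small absorbs the $A_k$ on the right, giving $A_k \leq \tfrac{1}{3} A_{k-1} + C'' B$, which iterates to a uniform bound $\sup_k A_k \leq C''' B$. The global $\|U\|_{2, \alpha, E_{[0, T]}}$ is then controlled by $\sup_k A_k$: the $C^0$ and spatial Hölder components are manifestly sups over $k$; time Hölder semi-norms of exponent $r \in (0, 1)$ with $|t - s| < 1$ lie in some $[k-1, k+1]$ and are bounded by $A_{k-1} + A_k$ via the same subadditivity trick, while those with $|t - s| \geq 1$ are bounded by $2 \|U\|_{C^0}$ since $|t - s|^r \geq 1$.
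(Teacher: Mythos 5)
Your proof is correct, but it takes a genuinely different route from the paper's. The paper proves a $T$-independent whole-space Schauder estimate for sections vanishing at $t=0$ (Lemma~\ref{lem:schauder}, built from Solonnikov's constant-coefficient estimate on $\R^n\times[0,T]$, which is $T$-independent by Theorem~4.1 and Eq.~(4.43) of \cite{Slk}), then reduces to that case by splitting $U=\eta U+(1-\eta)U$ with a cutoff near $t=0$ and applying a spatial partition of unity. You instead avoid any new $T$-independent ingredient: you chop $[0,T]$ into overlapping windows $[k-1,k+1]$ of fixed length, apply the $T$-dependent Theorem~\ref{thm:schauder} on each (with a $k$-uniform constant since $L$ is time-independent, so translating $t$ leaves everything invariant), absorb the $\chi'U$ error by interpolation, and close a contractive recursion $A_k\le \frac13 A_{k-1}+CB$ so the bound does not grow with the number of windows. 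That gets $T$-independence for free from the recursion structure rather than from a sharper estimate, at the cost of the bookkeeping of patching time-Hölder seminorms across the window boundaries (which you handle correctly by noting $|t-k|,\,|k-s|\le|t-s|$, so the seminorm on the union is at most the sum of the seminorms on the halves; and the $|t-s|\ge1$ case is trivially controlled by the $C^0$ norm). Both arguments are valid; the paper's is arguably cleaner conceptually, yours is more elementary in its inputs.

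Two small points you might tighten if this were written out: (i) the parabolic interpolation inequality you invoke needs its constant to be uniform over all translates of a fixed-length time interval, which holds by the same translation invariance but deserves a sentence; (ii) near the endpoints of $[0,T]$ the windows $[k-1,k+1]$ should be clipped to $[0,T]$ (and $T$ need not be an integer), which doesn't affect the argument but should be said.
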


See the proof in Appendix~\ref{app:schauder}

\subsection{The ancient flow}

In this section we prove Theorem~\ref{thm:ancientmcf}. So $E\to \Sigma$ is a 
vector bundle as in Theorem~\ref{thm:ancientmcf} and we use the notations 
introduced in the preceding sections. We start by giving a 
result that ensures the existence of solutions to \eqref{eq:bmcf}.

\begin{thm}\label{thm:solmcf}
Let $E\to \Sigma$ as above. There is $\delta_0$ such that for any 
$\delta<\delta_0$ there is $\eps>0$ such that, for any $W\in 
C^{2,\alpha}(E)$ with $|W|_{2,\alpha,E}\le\eps$, there is a unique solution
$U\in 
C^{2,\alpha}(E_{[0,1]})$ of
\[
\begin{cases}
\pr_t U=\Hbf(U)\\
U(\cdot,0)=W
\end{cases}
\]
with $\|U\|_{2,\alpha,E_{[0,1]}}<\delta$.
\end{thm}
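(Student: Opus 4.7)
My plan is to prove short-time existence by a contraction mapping argument applied to a linearized iteration scheme built from Theorems~\ref{thm:cauchy} and~\ref{thm:schauder}.

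First I would decompose the mean curvature operator near the zero section. Since $\Sigma_0$ is minimal, $\Hbf(0)=0$, and the linearization $D\Hbf(0)=\Lbf$ is the Jacobi operator, a time-independent second-order elliptic operator on sections of $E$ with $C^\infty$ coefficients. In local charts $\Hbf$ is quasilinear of the form $a^{ij}(x,U,\nabla U)\,\pr_{ij}U + b(x,U,\nabla U)$, so a Taylor expansion around $U=0$ gives
\[
\Hbf(U)=\Lbf(U)+\boQ(U),
\]
where $\boQ$ satisfies $\boQ(0)=0$ and $D\boQ(0)=0$. Using the smoothness of $\Hbf$ and standard composition estimates in H\"older spaces (since $C^{0,\alpha}$ is an algebra on $P=\Sigma\times[0,1]$), one obtains, for some $\delta_1>0$ and $C_0>0$, the quadratic bounds
\[
\|\boQ(V)\|_{0,\alpha,E_{[0,1]}}\le C_0\|V\|_{2,\alpha,E_{[0,1]}}^2,\qquad \|\boQ(V_1)-\boQ(V_2)\|_{0,\alpha,E_{[0,1]}}\le C_0\bigl(\|V_1\|_{2,\alpha}+\|V_2\|_{2,\alpha}\bigr)\|V_1-V_2\|_{2,\alpha}
\]
for all $V,V_1,V_2$ with $\|\cdot\|_{2,\alpha,E_{[0,1]}}\le \delta_1$.

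Next I would set up the iteration. Given $W\in C^{2,\alpha}(E)$ with $|W|_{2,\alpha,E}\le\eps$, define $T\colon \boB_\delta\to C^{2,\alpha}(E_{[0,1]})$, where $\boB_\delta$ is the closed ball of radius $\delta$ around $0$, by letting $T(V)=U$ be the unique solution of the \emph{linear} Cauchy problem
\[
\pr_tU-\Lbf U=\boQ(V),\qquad U(\cdot,0)=W,
\]
which exists by Theorem~\ref{thm:cauchy}. Applying the Schauder estimate (I will use Theorem~\ref{thm:schauder} since $T$ here is fixed equal to $1$, but the time-uniform version Proposition~\ref{prop:schauder} would work equally well) gives
\[
\|T(V)\|_{2,\alpha,E_{[0,1]}}\le C_1\bigl(\|\boQ(V)\|_{0,\alpha,E_{[0,1]}}+|W|_{2,\alpha,E}+\|T(V)\|_{L^2(E_{[0,1]})}\bigr).
\]
The weak $L^2$ term is controlled by the usual energy estimate for the linear parabolic system $\pr_tU-\Lbf U=F$: integrating against $U$ and using the bounded spectrum of $\Lbf$ yields $\|U\|_{L^2(E_{[0,1]})}\le C_2(\|F\|_{L^2(E_{[0,1]})}+|W|_{L^2(E)})$, hence $\|T(V)\|_{L^2}\le C_2'(\|\boQ(V)\|_{0,\alpha}+|W|_{2,\alpha})$, and we obtain a clean bound
\[
\|T(V)\|_{2,\alpha,E_{[0,1]}}\le C_3\bigl(\|\boQ(V)\|_{0,\alpha,E_{[0,1]}}+|W|_{2,\alpha,E}\bigr).
\]

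Finally I would close the fixed-point argument. For $V\in\boB_\delta$ with $\delta\le\delta_1$, the quadratic estimate gives $\|T(V)\|_{2,\alpha}\le C_3(C_0\delta^2+\eps)$, which is bounded by $\delta$ as soon as $\delta_0:=\min\{\delta_1,1/(2C_0C_3)\}$ and $\eps\le \delta/(2C_3)$ are chosen. Similarly, the Lipschitz bound on $\boQ$ yields
\[
\|T(V_1)-T(V_2)\|_{2,\alpha,E_{[0,1]}}\le C_3C_0\cdot 2\delta\cdot\|V_1-V_2\|_{2,\alpha,E_{[0,1]}},
\]
so $T$ is a $\tfrac12$-contraction once $\delta\le 1/(4C_0C_3)$, and Banach's theorem delivers a unique $U\in\boB_\delta$ with $U=T(U)$, i.e.\ a solution of $\pr_tU=\Hbf(U)$ with $U(\cdot,0)=W$. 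Uniqueness in the class $\|U\|_{2,\alpha}<\delta$ follows from the same contraction estimate applied to two candidate solutions. The main technical point is verifying the algebra/composition estimates underlying the quadratic and Lipschitz bounds on $\boQ$ in the parabolic H\"older norms, especially keeping track of the mixed space--time seminorms; this is a routine but slightly tedious chain-rule computation in local trivializations of $E$.
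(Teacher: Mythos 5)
Your argument is correct, but it takes a genuinely different route from the paper. The paper's proof is a one-step application of the implicit function theorem: it considers the map
\[
F(W,U)=\bigl(U(\cdot,0)-W,\ \pr_tU-\Hbf(U)\bigr)
\]
from $C^{2,\alpha}(E)\times C^{2,\alpha}(E_{[0,1]})$ to $C^{2,\alpha}(E)\times C^{0,\alpha}(E_{[0,1]})$, notes that $F(0,0)=(0,0)$ and that the partial derivative $D_UF(0,0)\colon Z\mapsto \bigl(Z(\cdot,0),\ \pr_tZ-\Lbf Z\bigr)$ is a Banach isomorphism (this is precisely Theorem~\ref{thm:cauchy} together with the Schauder estimate, or the open mapping theorem), and concludes. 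Your contraction-mapping proof is morally the same argument unwound by hand: the Schauder and $L^2$ energy estimates you invoke are exactly what make $D_UF(0,0)^{-1}$ bounded, and the quadratic remainder bound on $\boQ$ is exactly what makes $F$ a $C^1$ map near $(0,0)$. What each approach buys: the implicit function theorem route is more compact and automatically yields smooth dependence of $U$ on the initial datum $W$; your Picard-iteration route is more elementary, makes explicit which analytic ingredients are needed, and shows directly how $\delta_0$ and $\eps$ depend on the Schauder and composition constants. You correctly flag the one piece of work that the paper packages into the assertion "$F$ is $C^1$" --- namely, the verification of the quadratic and Lipschitz bounds on $\boQ$ in the parabolic H\"older norms on $E_{[0,1]}$ --- and this must indeed be carried out (in local trivializations, using the algebra property of $C^{0,\alpha}$ and the smoothness of $\Hbf$) to close your version of the argument.
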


\begin{proof}
Let us consider the map
\[
F:\begin{array}{ccc}
C^{2,\alpha}(E)\times C^{2,\alpha}(E_{[0,1]})&\to 
&C^{2,\alpha}(E)\times 
C^{0,\alpha}(E_{[0,1]}) \\
(W,U)&\mapsto&(U(\cdot,0)-W,\pr_tU-\Hbf(U))
\end{array}
\]
$F$ is a $C^1$ map and $F(0,0)=(0,0)$ since $\Sigma_0$ is minimal. If we 
compute the differential of $F$ with respect to $U$ at $(0,0)$ we have
\[
D_UF(0,0):
\begin{array}{ccc}
C^{2,\alpha}(E_{[0,1]})&\to &C^{2,\alpha}(E)\times 
C^{0,\alpha}(E_{[0,1]}) \\
Z&\mapsto &(Z(\cdot,0),\pr_tZ-\Lbf Z)
\end{array}
\]
So the invertibility of this differential is given by the solution to the 
Cauchy problem (Theorem~\ref{thm:cauchy}). Hence the implicit function theorem 
solves $F(W,U)=(0,0)$ for any $W$ with $|W|_{2,\alpha,E}$ small.
\end{proof}

The above theorem produces solutions to the bundle MCF \eqref{eq:bmcf}. Let 
$\eps(\delta)$ be given by Theorem~\ref{thm:solmcf} for $\delta<\delta_0$. 
Actually it allows you to extend a solution $U$ as long as 
$|U_t|_{2,\alpha,\Sigma}<\eps(\delta)$.
\begin{prop}\label{prop:prolong}
Let $\delta<\delta_0$. Let $U$ be a solution of the bundle MCF defined on 
$\Sigma\times[a,b]$ with 
$\|X\|_{2,\alpha,E_{[a,b]}}\le \delta$. Let $\bar t\in(b-1,b)$ and assume that 
$|U_{\bar t}|_{2,\alpha,\Sigma}\le \eps(\delta)$ then $U$ can be extended as a 
solution of the bundle MCF defined on $\Sigma\times[a,\bar t+1]$
% with $\|U\|_{2,\alpha,E_{[a,\bar t+1]}}\le \delta$.
\end{prop}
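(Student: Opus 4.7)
The idea is simply to restart the flow at time $\bar t$ using the short-time existence result of Theorem~\ref{thm:solmcf} and patch the new solution onto the existing one by uniqueness.

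Since $|U_{\bar t}|_{2,\alpha,E}\le \eps(\delta)$, Theorem~\ref{thm:solmcf}, applied with initial data $U_{\bar t}$ and after a time translation to the interval $[\bar t,\bar t+1]$ (permitted because \eqref{eq:bmcf} is autonomous in $t$), produces a solution $\tilde U\in C^{2,\alpha}(E_{[\bar t,\bar t+1]})$ of the bundle MCF with $\tilde U(\cdot,\bar t)=U_{\bar t}$ and $\|\tilde U\|_{2,\alpha,E_{[\bar t,\bar t+1]}}<\delta$.

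Next I would compare $\tilde U$ with $U$ on the overlap $[\bar t,b]$, which is a nonempty interval of length strictly less than $1$ by the hypothesis $\bar t\in(b-1,b)$. Both sections solve \eqref{eq:bmcf} with common initial value $U_{\bar t}$ at $t=\bar t$, and both have $C^{2,\alpha}$ norm at most $\delta$ on this overlap: the bound on $\tilde U$ holds by construction, while the bound on $U|_{[\bar t,b]}$ is inherited from the assumption $\|U\|_{2,\alpha,E_{[a,b]}}\le\delta$. Invoking the uniqueness part of Theorem~\ref{thm:solmcf} then forces $\tilde U\equiv U$ on $[\bar t,b]$. Consequently the piecewise definition
\[
\hat U(x,t)=\begin{cases} U(x,t)& t\in[a,b],\\ \tilde U(x,t)& t\in[b,\bar t+1],\end{cases}
\]
is unambiguous, lies in $C^{2,\alpha}(E_{[a,\bar t+1]})$, and provides a solution of the bundle MCF on $[a,\bar t+1]$, as required.

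\textbf{Main difficulty.} There is no real conceptual obstacle; the only point requiring a moment of care is that the uniqueness statement in Theorem~\ref{thm:solmcf} is formulated on the fixed interval of length $1$, while the overlap $[\bar t,b]$ is strictly shorter. This is harmless, since $\tilde U$ and $U$ can each be regarded as solutions on $[\bar t,\bar t+1]$ of length $1$ (extending $U$ beyond $b$ is unnecessary for the comparison argument: it suffices to compare on $[\bar t,b]$ and apply uniqueness by restricting the uniqueness on the full interval, or equivalently by re-running the implicit function theorem argument of Theorem~\ref{thm:solmcf} on any shorter interval, which only improves the estimates).
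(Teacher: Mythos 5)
Your overall strategy — restart the flow at time $\bar t$ using the short-time existence of Theorem~\ref{thm:solmcf}, identify the restarted solution with the original one on the overlap $[\bar t, b]$, and splice — is exactly the paper's. The point where you diverge from the paper, and where your argument has a genuine gap, is the identification on the overlap. The uniqueness clause in Theorem~\ref{thm:solmcf} concerns solutions defined on the \emph{whole} interval of length $1$ with $C^{2,\alpha}$-norm below $\delta$. On the overlap $[\bar t, b]$, of length $b-\bar t<1$, you have two candidates, $\tilde U|_{[\bar t,b]}$ and $U|_{[\bar t,b]}$, but $U$ is not defined past $b$, so $U|_{[\bar t,b]}$ does not live in the space where Theorem~\ref{thm:solmcf} asserts uniqueness. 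Your parenthetical remarks do not close this: one cannot "regard $U$ as a solution on $[\bar t,\bar t+1]$" (it simply is not defined there); "restricting the uniqueness on the full interval" is not a valid deduction, because uniqueness among solutions on a larger interval does not yield uniqueness among solutions defined only on a subinterval unless you can extend both candidates to the larger interval with the same norm bound, which is precisely what you do not know; and re-running the implicit function theorem on the shorter interval does produce \emph{some} uniqueness radius, but it is not automatic that it is at least $\delta$ — the radius supplied by the IFT depends on quantities (inverse norm of the linearization, modulus of continuity of $D\Hbf$, etc.) that are not obviously monotone in the interval length, so "only improves the estimates" is an assertion, not an argument.

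The paper avoids this altogether with a direct uniqueness argument for the quasilinear problem that works on any interval. Denote by $Z$ the restarted solution given by Theorem~\ref{thm:solmcf}. Since $\Hbf$ is smooth, one writes
\[
\pr_t(Z-U)=\Hbf(Z_t)-\Hbf(U_t)=\int_0^1 D\Hbf\bigl(sZ_t+(1-s)U_t\bigr)(Z_t-U_t)\,ds=L_t(Z_t-U_t),
\]
so the difference $Z-U$ solves a \emph{linear} parabolic system $\pr_tW-L_tW=0$ whose coefficients $L_t$ are second-order elliptic with $C^\alpha$ coefficients, because $Z$ and $U$ are bounded in $C^{2,\alpha}$. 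As $(Z-U)(\cdot,\bar t)=0$, the uniqueness part of Theorem~\ref{thm:cauchy} for the linear Cauchy problem forces $Z\equiv U$ on $[\bar t,b]$, with no constraint on the overlap length. This linearization-of-the-difference step is the standard route to uniqueness for quasilinear parabolic flows; substituting it for your appeal to the uniqueness clause of Theorem~\ref{thm:solmcf} completes the proof.
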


\begin{proof}
Let $Z$ be the solution of \eqref{eq:bmcf} defined on $\Sigma\times[\bar t,\bar 
t+1]$ with $Z(\cdot,\bar t)=U(\cdot,\bar t)$ given by Theorem~\ref{thm:solmcf}. 
It suffices to prove that $Z=U$ on $\Sigma\times[\bar t,b]$ to conclude. This 
uniqueness is given by the following remark: we have
\begin{align*}
\pr_t(Z-U)=\Hbf(Z_t)-\Hbf(U_t)&=\int_0^1\frac{d}{ds}\Hbf(sZ_t+(1-s)U_t)ds\\
&=\int_0^1D\Hbf(sZ_t+(1-s)U_t)(Z_t-U_t)ds\\
&=L_t(Z_t-U_t)
\end{align*}
where $L_t$ are elliptic linear differential operators of order $2$ acting on 
sections of $E$ with coefficient in $C^\alpha$. Then by the uniqueness part of 
Theorem~\ref{thm:cauchy} and since 
$(Z-U)_{\bar t}=0$ we have $Z-U=0$ on $\Sigma\times[\bar t,b]$.
\end{proof}

To prove Theorem~\ref{thm:ancientmcf}, we consider $V$ an eigen-section 
associated to $\lambda_0$. We chose $\delta>0$ as in 
Theorem~\ref{thm:solmcf}. Let $a_\delta$ be such that $e^{-\lambda_0 
a_\delta}|V|_{2,\alpha,E}=\eps=\eps(\delta)$. Then for any $a<a_\delta$, 
$e^{-\lambda_0 a}|V|_{2,\alpha,E}<\eps$ so we can consider the section 
$U^{(a)}$ solution to the problem
\[
\begin{cases}
\pr_t U=\Hbf(U)\\
U(\cdot,a)=e^{-\lambda_0 a}V
\end{cases}
\]
on $\Sigma\times[a,b]$ where $b$ is chosen the largest possible such that 
$\|U^{(a)}\|_{2,\alpha,E_{[a,b]}}\le\delta$, $|U^{(a)}_t|_{2,\alpha,\Sigma}\le 
\eps$ and $\|e^{-\lambda_0t}V\|_{2,\alpha,E_{[a,b]}}\le\delta$ . So the proof 
consists in estimating the 
norm of $U^{(a)}$ in order to control $b$ and prove that, as $a\to -\infty$,
$U^{(a)}$ 
converges to the desired solutions of \eqref{eq:bmcf}.

Let us introduce $Z^{(a)}=U^{(a)}-e^{-\lambda_0 t}V$ for $t\in[a,b]$. We 
have the following result.
\begin{lem}\label{lem:estim}
There is $\delta>0$ and $b_0\in \R$ such that for any $a<\min(a_\delta,b_0)$, 
$U^{(a)}$ is defined on $[a,b_0]$. Moreover for any $a\le b\le b_0$
\[
\|Z^{(a)}\|_{L^2(E_{[a,b]})}\le e^{-3\lambda_0 b/2}
\]
\end{lem}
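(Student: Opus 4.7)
The plan is an energy estimate on the error $\Za := U^{(a)} - \tilde V$ where $\tilde V_t := e^{-\lambda_0 t}V$, coupled with a continuity/bootstrap to propagate the $C^{2,\alpha}$ smallness required by Proposition~\ref{prop:prolong}. Since $V$ lies in the first eigenspace, $\tilde V$ satisfies the linearized bundle MCF $\pr_t \tilde V = \Lbf \tilde V$; subtracting from $\pr_t U^{(a)} = \Hbf(U^{(a)})$ gives (with $\Za(\cdot,a)=0$)
\[
\pr_t \Za - \Lbf \Za \;=\; \boN(\tilde V + \Za), \qquad \boN(U) := \Hbf(U) - \Lbf U.
\]
Since $\Hbf$ is smooth with $\Hbf(0)=0$ and $D\Hbf(0)=\Lbf$, a Taylor expansion yields the pointwise estimate
\[
|\boN(\tilde V+\Za)| \;\le\; C\bigl(|\tilde V|_{C^2}^2 + |\tilde V|_{C^2}|\Za|_{C^2} + |\Za|_{C^2}^2\bigr)
\]
on the region where $|U|_{C^2}\le \delta_0$, the $C^2$ norms appearing because $\Hbf$ is second order.

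For the energy estimate, put $\phi(t) := |\Za_t|_{L^2(E)}^2$ and pair the equation with $\Za_t$ in $L^2(E)$. Using the spectral bound $\la Z,\Lbf Z\ra_{L^2}\le -\lambda_0|Z|_{L^2}^2$ (with $-\lambda_0$ the top of the spectrum of $\Lbf$), the pointwise bound on $\boN$ together with $|\tilde V_t|_{C^2}\le Ce^{-\lambda_0 t}$, the a priori smallness $\|U^{(a)}\|_{2,\alpha,E_{[a,b]}}\le\delta$, and Young's inequality with parameter $\eps=-\lambda_0$, one derives an ODI of the form
\[
\phi'(t) \;\le\; \bigl(-3\lambda_0 + O(\delta)\bigr)\,\phi(t) + C_1\,e^{-4\lambda_0 t}.
\]
Duhamel's formula with $\phi(a)=0$ (the boundary term at $s=a$ vanishes as $a\to -\infty$ since $\lambda_0 + O(\delta)<0$) gives the pointwise-in-time bound $|\Za_t|_{L^2(E)}\le C_2 e^{-2\lambda_0 t}$, and integrating in $t$ yields $\|\Za\|_{L^2(E_{[a,b]})}\le C_3 e^{-2\lambda_0 b}$. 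Since $-2\lambda_0>-3\lambda_0/2$, choosing $b_0$ negative enough that $C_3\le e^{\lambda_0 b_0/2}$ upgrades this to $\|\Za\|_{L^2(E_{[a,b]})}\le e^{-3\lambda_0 b/2}$ uniformly for $b\le b_0$.

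To ensure $U^{(a)}$ is actually defined on all of $[a,b_0]$ the bootstrap hypotheses must be propagated. Let $b^\ast\in(a,b_0]$ be maximal with $\|U^{(a)}\|_{2,\alpha,E_{[a,b^\ast]}}\le\delta$, $|U^{(a)}_t|_{2,\alpha,E}\le \eps(\delta)$ on $[a,b^\ast]$, and $\|\tilde V\|_{2,\alpha,E_{[a,b^\ast]}}\le\delta$; Proposition~\ref{prop:prolong} then guarantees $U^{(a)}$ exists up to $b^\ast$. Applying Proposition~\ref{prop:schauder} to the linear equation $\pr_t U^{(a)} - \Lbf U^{(a)} = \boN(U^{(a)})$ bounds $\|U^{(a)}\|_{2,\alpha,E_{[a,b^\ast]}}$ by the $C^{0,\alpha}$ norm of $\boN(U^{(a)})$ (quadratically small under the bootstrap), the initial norm $|U^{(a)}_a|_{2,\alpha,E}=e^{-\lambda_0 a}|V|_{2,\alpha,E}$ (small for $a$ sufficiently negative), and $\|U^{(a)}\|_{L^2(E_{[a,b^\ast]})}$ (controlled by $\|\tilde V\|_{L^2}$ together with the $L^2$ bound on $\Za$ just obtained). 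For $b_0$ sufficiently negative this total is strictly less than $\delta$, contradicting maximality unless $b^\ast=b_0$. The main obstacle is precisely running these estimates in tandem: because the pointwise control of $\boN$ involves $C^2$ norms of $\Za$, the $L^2$ energy estimate and the Schauder upgrade must close together, and the specific exponent $-3\lambda_0/2$ is the natural slack between the linear rate $-\lambda_0$ at which $\tilde V$ grows and the quadratic rate $-2\lambda_0$ of the source, into which the accumulated constants get absorbed.
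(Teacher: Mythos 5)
Your high-level plan (split $U^{(a)}=\tilde V+\Za$, estimate $\Za$ in $L^2$ via the spectral gap, then bootstrap the $C^{2,\alpha}$ smallness via the time-uniform Schauder estimate of Proposition~\ref{prop:schauder} and the continuation criterion of Proposition~\ref{prop:prolong}) is the same as the paper's, with the $L^2$ energy pairing playing the role of the paper's Duhamel formula $\Za_t=\int_a^t e^{(t-s)\Lbf}G(\Za_s+\tilde V_s)\,ds$. But the ODI you claim,
\[
\phi'(t)\le(-3\lambda_0+O(\delta))\phi(t)+C_1e^{-4\lambda_0 t},
\]
does not follow from the listed ingredients (spectral bound, pointwise quadratic bound on $\boN$, the a priori smallness $\|U^{(a)}\|_{2,\alpha}\le\delta$, and Young). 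The trouble is that $\boN$ is quadratic in $(U,\nabla U,\nabla^2 U)$, so the pairing term is controlled by
\[
\bigl|\langle\Za_t,\boN\rangle_{L^2}\bigr|\lesssim|\Za_t|_{L^2}\bigl(\|\tilde V_t\|_{C^2}^2+\|\tilde V_t\|_{C^2}\|\Za_t\|_{C^2}+\|\Za_t\|_{C^2}^2\bigr),
\]
and the bootstrap assumption $\|\Za_t\|_{C^2}\le 2\delta$ gives only a \emph{constant-in-$t$} bound for the last two factors. After Young this yields a source of size $O(\delta)e^{-2\lambda_0 t}+O(\delta^2)$ rather than $C_1e^{-4\lambda_0 t}$, and the Gronwall/Duhamel integral of a constant (or of $e^{-2\lambda_0 s}$) against the kernel $e^{-\lambda_0(t-s)}$ picks up a factor $e^{\lambda_0 a}\to\infty$ as $a\to-\infty$: the estimate does not close uniformly in $a$. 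You flag this coupling as ``the main obstacle'' in your final sentence, but you do not carry it out, and as stated the ODI is not justified.

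The paper's proof closes exactly this loop in a specific way you omit: it makes the $L^2$ bound itself the quantity propagated by a continuity argument (``let $c_1$ be the first time where $\|\Za\|_{L^2(E_{[a,c]})}\le e^{-3\lambda_0 c/2}$ fails''), and inside the Duhamel integral it converts $|\Za_s|_{2,\alpha,E}^2$ into a decaying quantity by invoking the Schauder estimate at intermediate times $s$, i.e.\ inequality~\eqref{eq:schauder} gives
\[
|\Za_s|_{2,\alpha,E}\le\|\Za\|_{2,\alpha,E_{[a,s]}}\le C\bigl(\|\Za\|_{L^2(E_{[a,s]})}+e^{-2\lambda_0 s}\bigr)\le C e^{-3\lambda_0 s/2},
\]
so the nonlinear source becomes $\lesssim e^{-3\lambda_0 s}+e^{-2\lambda_0 s}$ and the integral produces $|\Za_t|_{L^2}\lesssim e^{-2\lambda_0 t}$, beating the bootstrap threshold $e^{-3\lambda_0 t/2}$ once $t$ is negative enough. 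In your version, the Schauder estimate only appears in the second paragraph to control the $C^{2,\alpha}$ norm of $U^{(a)}$ itself; it is not fed back into the energy estimate to give a \emph{decaying} bound on $\|\Za_s\|_{C^2}$, which is what the argument needs. Without that, the source term in the ODI has the wrong rate and the conclusion does not follow.
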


\begin{proof}
Let us choose $\delta\in(0,\delta_0)$ as in Theorem~\ref{thm:solmcf} that will 
be fixed below. Let us write $\Hbf=\Lbf+G$ where $G$ satisfies 
$|G(U)|_{\alpha,E}\le C |U|_{2,\alpha,E}^2$ for any section $U$ of $E$ with 
$|U|_{2,\alpha,E}\le \delta_0$. Actually $G$ satisfies 
$\|G(U)\|_{\alpha,E_{[a,b]}}\le C \|U\|_{2,\alpha,E_{[a,b]}}^2$ for any section 
$U$ of $E_{[a,b]}$ with $\|U\|_{2,\alpha,E_{[a,b]}}\le \delta_0$ and $C$ 
independent of $a,b$. In the computation below, the constant $C$ will change 
line to line but independently of $a$.

Let $b_\delta$ be such that 
$\|e^{-\lambda_0t}V\|_{2,\alpha,E_{(-\infty,b_\delta]}}\le\delta$. 
Then for $a<\min(a_\delta,b_\delta)$, we consider the solution $U^{(a)}$ 
defined on $[a,b]$, 
then $Z^{(a)}$ satisfies 
\begin{equation}\label{eq:dtZ}
\pr_t Z^{(a)}=\pr_t 
U^{(a)}+\lambda_0e^{-\lambda_0t}V=\Lbf(Z^{(a)})+G(U^{(a)})=\Lbf(Z^{(a)})+G(Z^{(a)}
 +e^{-\lambda_0t}V)
\end{equation}
Since $\|Z^{(a)}\|_{2,\alpha,E_{[a,b]}}\le 2\delta$, for $c\in[a,b]$, 
Solonnikov's estimate (Proposition~\ref{prop:schauder}) gives:
\begin{align*}
\|Z^{(a)}\|_{2,\alpha,E_{[a,c]}}&\le 
C(\|Z^{(a)}\|_{L^2(E_{[a,c]})}+\|G(Z^{(a)} 
+e^{-\lambda_0t}V)\|_{\alpha,E_{[a,c]}})\\
&\le C(\|Z^{(a)}\|_{L^2(E_{[a,c]})} +C(\|Z^{(a)}\|_{2,\alpha,E_{[a,c]}}^2 
+e^{-2\lambda_0 c}))\\
&\le 
C(\|Z^{(a)}\|_{L^2(E_{[a,c]})} +C(\delta\|Z^{(a)}\|_{2,\alpha,E_{[a,c]}} 
+e^{-2\lambda_0 c}))
\end{align*}
So we can choose and fix $\delta$ small enough such that $C\delta<1$ to obtain:
\begin{equation}\label{eq:schauder}
\|Z^{(a)}\|_{2,\alpha,E_{[a,c]}}\le C(\|Z^{(a)}\|_{L^2(E_{[a,c]})} 
+e^{-2\lambda_0c})
\end{equation}

So if $\|Z^{(a)}\|_{L^2(E_{[a,c]})}\le e^{-3\lambda_0c/2}$, we obtain 
$\|Z^{(a)}\|_{2,\alpha,E_{[a,c]}}\le Ce^{-3\lambda_0 c/2}$ and 
$\|U^{(a)}\|_{2,\alpha,E_{[a,c]}}\le Ce^{-\lambda_0 c}\le \min(\delta,\eps)$ if 
$c$ is less 
than some $\bar c$ (we restrict the definition of $U^{(a)}$ to $(-\infty,\bar 
c]$). So as long as the estimate $\|Z^{(a)}\|_{L^2(E_{[a,c]})}\le 
e^{-3\lambda_0 c/2}$ is true the solution $U^{(a)}$ is well defined. Let us now 
prove the estimate.

Since $Z^{(a)}(\cdot,a)=0$ the estimate is true at $c=a$. So let $c_1$ denote 
the 
first 
time where the estimate is not true. Because of \eqref{eq:dtZ}, we have the 
expression 
\[
Z^{(a)}_t=\int_a^t e^{(t-s)\Lbf}G(\Za_s+e^{-\lambda_0s}V)ds
\]
Since $\lambda_0$ is the first eigenvalue of $\Lbf$ we have
\begin{align*}
|\Za_t|_{L^2(E)}&\le 
\int_a^te^{-\lambda_0(t-s)}|G(\Za_s+e^{-\lambda_0s}V)|_{L^2(E)}ds\\
&\le\int_a^te^{-\lambda_0(t-s)}C|G(\Za_s+e^{-\lambda_0s}V)|_{0,E}ds\\
&\le\int_a^te^{-\lambda_0(t-s)}C(|\Za_s|_{2,\alpha,E}^2+e^{-2\lambda_0s})ds\\
&\le\int_a^te^{-\lambda_0(t-s)}C(\|\Za\|_{L^2(E_{[a,s]})}^2+e^{-2\lambda_0s})ds\\
&\le\int_a^te^{-\lambda_0(t-s)}C(e^{-3\lambda_0s}+e^{-2\lambda_0s})ds\\
&\le Ce^{-\lambda_0t}\int_a^te^{-\lambda_0s}ds\le Ce^{-2\lambda_0t}
\end{align*}
Then $\|\Za\|_{L^2(E_{[a,c_1]})}\le Ce^{-2\lambda c_1}$. So we see that $c_1$ 
must satisfies $Ce^{-\lambda_0 c_1/2}\ge 1$; \textit{i.e.} $c_1$ is bounded 
below by some universal constant $c_0$. So the Lemma is proved with 
$b_0=\min(c_0,\bar c,b_\delta)$.
\end{proof}

Let $b_0$ be given by Lemma~\ref{lem:estim}. By \eqref{eq:schauder}, we have 
$\|\Za\|_{2,\alpha,E_{[a,b_0]}}\le Ce^{-3\lambda_0 b_0/2}$. So by Arzela-Ascoli 
theorem, there is $Z\in C^{2,\alpha}(E_{(-\infty,b_0]})$, such that $\Za$ 
subconverge in $C^2$ to $Z$. Moreover, $Z$ satisfies 
$\pr_tZ=\Lbf Z+G(Z+e^{-\lambda_0 t}V)$, \textit{i.e.} $U=Z+e^{-\lambda_0 t}V$ 
is 
a 
solution of \eqref{eq:bmcf}. Since $\|\Za\|_{2,\alpha,E_{[a,t]}}\le 
Ce^{-3\lambda_0 t/2}$ for $t\le b_0$, we have 
$\|Z\|_{2,\alpha,E_{(-\infty,t]}}\le Ce^{-3\lambda_0 t/2}$ and then 
$\lim_{t\to-\infty}e^{\lambda_0 t}X_t=V$ in $C^{2,\alpha}$.

\section{The rigidity result}\label{sec:rigidity}

In this section we prove a rigidity result concerning 
$\S_1^n=\{p\in\R^{n+1}\mid \|p\|=1\}$ endowed with the induced metric 
$g_{\S_1^n}$. 
For $0\le k \le n-1$, let us consider the map:
\[
\Psi:\begin{array}{ccc}
\S_1^k\times\R\times \S_1^{n-k-1}&\longrightarrow &\S_1^n\\
(p,s,q)&\longmapsto&((\cos s) p,(\sin s) q)
\end{array}
\]
We notice that $\Psi(\S_1^k\times [0,\frac\pi2]\times \S_1^{n-k-1})=\S_1^n$, 
$\Psi$ 
is injective on $\S_1^k\times (0,\frac\pi2)\times \S_1^{n-k-1}$, 
$\Psi(p,0,q)=(p,0)$ and $\Psi(p,\frac\pi2,q)=(0,q)$. So $\S_1^n$ can be seen as 
the 
joint of $\S_1^k$ and $\S_1^{n-k-1}$. Moreover $\Psi^*(g_{\S_1^n})= \cos^2s 
g_{\S_1^k}+ds^2+\sin^2s g_{\S_1^{n-k-1}}$. The curves $s\mapsto \Psi(p,s,q)$ 
are geodesics of $\S_1^n$.

For $k=2$, we see that $\Psi(\S_1^2,0,q)$ is an immersed minimal sphere in 
$\S_1^n$ which is isometric to $\S_1^2$. Actually it is a totally geodesic 
equatorial $2$-sphere in $\S_1^n$. As a minimal surface its index is $n-2$. Our 
rigidity result looks at such an immersed sphere in a Riemannian manifold.

\begin{thm}\label{th:rigidity}
Let $M$ be a Riemannian $n\ge 3$-manifold whose sectional curvature is bounded 
above by $1$. Let us assume that $M$ contains an immersed minimal $2$-sphere of 
area $4\pi$ which is 
\begin{itemize}
\item either of index at least $n-2$
\item or unstable in any parallel directions.
\end{itemize}
Then the universal cover of $M$ is isometric to the sphere $\S_1^n$.
\end{thm}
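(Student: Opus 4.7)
The plan is to analyze the functional $F(S)=\boA(S)+\int_S\|\vec H\|^2\,d\sigma-4\pi$ defined on immersed $2$-spheres in $M$. Combining the Gauss equation (which in codimension $n-2$ gives $K^\Sigma=K^M(T\Sigma)+\|\vec H\|^2-\tfrac12\|\inter B\|^2$) with the Gauss--Bonnet formula $\int_S K^\Sigma\,d\sigma=4\pi$ yields
\[
F(S)=\int_S\bigl(1-K^M(T_pS)\bigr)\,d\sigma+\tfrac12\int_S\|\inter B\|^2\,d\sigma.
\]
Under the curvature bound $K^M\le 1$ this is non-negative, with equality iff $S$ is totally umbilical and $K^M\equiv 1$ on every tangent $2$-plane of $S$. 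Applied to the given minimal sphere $S_0$ of area $4\pi$, we get $F(S_0)=0$, so $S_0$ is totally geodesic and $K^M\equiv 1$ on all its tangent $2$-planes.

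Either form of the hypothesis provides a negative eigenvalue $\lambda_0<0$ of the Jacobi operator $L$ of $S_0$ together with an eigensection $V$. Theorem~\ref{thm:ancientmcf} then yields an ancient mean curvature flow $\{S_t\}_{t\in(-\infty,b)}$ with $e^{-\lambda_0 t}(S_t-S_0)\to V$ in $C^{2,\alpha}$ as $t\to-\infty$. Because mean curvature flow is the negative $L^2$-gradient of area, $\boA(S_t)$ is non-increasing and tends to $4\pi$, while $\int_{S_t}\|\vec H\|^2\,d\sigma\to 0$; hence $F(S_t)\ge 0$ for every $t$, with $F(S_t)\to 0$ as $t\to-\infty$.

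The heart of the proof is to upgrade this limit to the identity $F(S_t)\equiv 0$. I would compute $\tfrac{d}{dt}F(S_t)$ by combining $\tfrac{d\boA}{dt}=-4\int\|\vec H\|^2\,d\sigma$ with the formula for $\tfrac{D}{dt}(m\vec H)$ from Section~\ref{sec:geometry} applied to the variation field $X=2\vec H$, and by differentiating the representation $F=\int(1-K^M)+\tfrac12\int\|\inter B\|^2$. The curvature bound $K^M\le 1$ together with the pointwise attainment $K^M\equiv 1$ along $S_0$ (so $\nabla K^M$ vanishes in directions tangent to $S_0$ at points of attainment) should let one absorb the curvature-derivative terms that appear, yielding a differential inequality of the form $\tfrac{d}{dt}F(S_t)\le C\,F(S_t)$ with $C$ uniform. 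Integrating backward from any $t$ to $s\to-\infty$ then forces $F(S_t)\le\lim_{s\to-\infty}e^{C(t-s)}F(S_s)=0$, and combined with $F(S_t)\ge 0$ this forces $F\equiv 0$. Establishing this monotonicity is the main technical obstacle.

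Once $F(S_t)\equiv 0$ is known, every $S_t$ is totally umbilical with $K^M\equiv 1$ on its tangent $2$-planes. Varying $V$ within the first eigenspace of $L$ (which, under either hypothesis, is rich enough to span all normal directions exhibiting instability of $S_0$) and running the ancient flow in each direction produces a family of totally umbilical $2$-spheres sweeping out a neighborhood $U$ of $S_0$. At every point $p\in U$ the tangent $2$-plane of the sphere through $p$ has $K^M(p)=1$, and combined with $K^M\le 1$ on \emph{every} $2$-plane this forces $K^M\equiv 1$ on all $2$-planes of $U$, i.e.\ $(U,g)$ has constant sectional curvature $1$. A propagation argument along geodesics emanating from $S_0$, in the spirit of the Anderson--Howard rigidity cited in the introduction, then extends this to all of $M$, so the universal cover is isometric to $\S_1^n$.
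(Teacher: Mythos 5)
You correctly identify the paper's overall strategy (the functional $F$, the ancient mean curvature flow out of $S_0$ from Theorem~\ref{thm:ancientmcf}, and a Gronwall argument forcing $F\equiv 0$), but there are two concrete gaps that break the argument.

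First, your Gronwall step does not close: from $\tfrac{d}{dt}F(S_t)\le C\,F(S_t)$ with a \emph{uniform} constant $C>0$ you would only get $F(S_t)\le e^{C(t-s)}F(S_s)$, and as $s\to-\infty$ the factor $e^{C(t-s)}\to\infty$ while $F(S_s)\to 0$; the product is indeterminate, and the asserted limit $0$ is not justified. What actually saves the argument in the paper is that the Gronwall rate is time-dependent and \emph{integrable} at $-\infty$: the derivative computation gives $\tfrac{d}{dt}F(\Sigma_t)\le 4\sup_{\Sigma_t}\|\vec H\|^2\,F(\Sigma_t)$, and the ancient flow construction gives $\sup_{\Sigma_t}\|\vec H\|^2\le Ce^{4t}$ near $-\infty$, so $\int_{-\infty}^t$ of the rate is finite and the backward Gronwall integration genuinely forces $F(\Sigma_t)\le 0$. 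You need this exponential decay of $\|\vec H\|$, not a uniform constant. (Relatedly, you skip Lemma~\ref{lem:eigen}, which shows $\lambda_0=-2$, that the first eigensections are parallel, and that $NX$ is parallelizable; these facts feed directly into the decay rate and the later structural analysis.)

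Second, your final step is simply false: knowing that at every point $p$ in a neighborhood \emph{one} tangent $2$-plane has $K^M(p)=1$, together with $K^M\le 1$ everywhere, does \emph{not} force $K^M\equiv 1$ on all $2$-planes. The product $\S_1^2\times\R$ is a counterexample: every point carries a $2$-plane of curvature $1$ and $K\le 1$, but mixed $2$-planes have $K=0$. The paper instead extracts much more from $F\equiv 0$: that $\vec H$ is parallel and $(R(e,\vec H)e,\vec H)=\|\vec H\|^2$, that the sectional-curvature function attains its maximum $1$ on the planes $(e,\nu)$ (giving vanishing first-derivative identities for $R$), and then an explicit analysis of the geodesics $s\mapsto\exp_{X(p)}(s\nu)$, the equation $H_s=\tan s$, the Jacobi fields $\partial_i$ and $\partial_j$, the focal point at $s=\pi/2$, the reflection $\Psi(p,s+\pi/2,q)=\Psi(-p,\pi/2-s,q)$ giving a conjugate point at $\pi$, and Rauch comparison. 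Only after all of this does the pull-back metric reduce to $\cos^2 s\,g_{\S_1^2}+ds^2+\sin^2 s\,g_{\S_1^{n-3}}$ and yield the isometry. Your ``propagation argument along geodesics emanating from $S_0$, in the spirit of Anderson--Howard'' is not a substitute for this analysis, and the intermediate claim that the neighborhood has constant curvature $1$ is the wrong thing to try to establish directly.
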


\begin{proof}
Let $S$  be an immersed $2$-sphere in $M$, using Gauss and Gauss-Bonnet 
formulas 
we have
\begin{align*}
4\pi=\int_S K_S&=\int_S K_{TS}+(B(e_1,e_1),B(e_2,e_2))-\|B(e_1,e_2)\|^2\\
&=\int_S K_{TS}+\|\vec H\|^2-\|\inter B(e_1,e_1)\|^2-\|B(e_1,e_2)\|^2\\
&\le \boA(S)+\int_S\|\vec H\|^2-\frac12\|\inter B\|^2
\end{align*}
where $(e_1,e_2)$ is an orthonormal basis of $TS$, $K_S$ denotes the sectional 
curvature of $S$, $K_{TS}$ the sectional curvature of $M$ on the $2$-plane $TS$ 
and $\inter B$ is the 
traceless part of $B$. As a consequence
\[
F(S)=\boA(S)+\int_S\|\vec H\|^2-4\pi \ge \int_S \frac12\|\inter 
B\|^2\ge 0
\]
Hence $F(S)=0$ implies that $S$ is totally umbilic, $K_{TS}=1$ and 
$K_S=1+\|\vec 
H\|^2$. 

The immersed $2$-sphere in $M$ lifts to its universal cover with the same 
instability property. So we assume that $M$ is simply connected and $X:\S^2\to 
M$ 
an immersed minimal $2$-sphere as in the statement of the theorem. Let us 
notice 
that since 
$\Sigma=X(\S^2)$ is a minimal surface of area $4\pi$, 
$F(\Sigma)=0$ and $\Sigma$ is totally geodesic and $K_\Sigma=1$ so $\Sigma$ is 
isometric to $\S_1^2$: we can choose $X$ such that $X$ is 
an isometry between $\S_1^2$ and $\Sigma=X(\S^2)$.
Let us denote by $NX$ the normal vector bundle $\{(p,v)\in X^*TM\mid v\in 
T_pX^\perp\}$ and consider the map  
\[
\Phi : \begin{array}{ccc}
N X &\to& M\\
(p,v)&\mapsto& \exp_{X(p)}(v)
\end{array}
\]
We want to study the pull-back metric $h=\Phi^*g$ on $NX$ in order to 
control when $\Phi$ is an immersion.

The first step of the proof is
\begin{lem}\label{lem:eigen}
The normal bundle $NX$ is parallelizable. Moreover for any $p\in\S_1^2$ and 
unit 
vectors $e\in T_pX$ and $v\in T_pX^\perp$, $(R(e,v)e,v)=1$.
\end{lem}

\begin{proof}[Proof of Lemma~\ref{lem:eigen}]
By hypothesis $\Sigma$ is unstable so the Jacobi operator $L$ has a negative 
eigenvalue $\lambda$. Let us prove that $\lambda=-2$ and the associated 
eigen-section $V$ is a parallel section of $NX$.

We have $LV=-\lambda V$. For small $t$, we consider the immersed 
sphere $\Sigma_t=\{\Phi(p,tV(p)); p\in\S^2\}$. We then have $F(\Sigma_t)\ge 
0$ for any $t$. $F(\Sigma_0)=0$, so the first derivative of $t\mapsto 
F(\Sigma_t)$ must vanish at $t=0$: it is confirmed by the computation
\[
\frac{d}{dt}F(\Sigma_t)_{|t=0}=\int_\Sigma(-2\vec H,V)+\int_\Sigma\|\vec 
H\|^2(-2\vec H,V)+\int_\Sigma(LV,\vec H)=0
\]
Now the second derivative has to be non-negative and we have
\begin{align*}
\frac{d^2}{dt^2}F(\Sigma_t)_{|t=0}&=\int_\Sigma-(LV,V)+\int_\Sigma\frac12(LV,LV)\\
&=\int_\Sigma(\lambda+\frac12\lambda^2)\|V\|^2
\end{align*}
So $\lambda^2+2\lambda\ge 0$: $\lambda\le -2$. Since $\Sigma$ is totally 
geodesic, we also have
\begin{align*}
\lambda\int_\Sigma\|V\|^2&=\int_\Sigma(-LV,V)\\
&=\int_\Sigma\|\nabla^\perp V\|^2-(R(e_i,V)e_i,V)- (V,B(e_i,e_j))^2\\
&=\int_\Sigma\|\nabla^\perp V\|^2-(R(e_i,V)e_i,V)\ge \int_\Sigma -2\|V\|^2
\end{align*}
So $\lambda\ge -2$. This gives $\lambda=-2$. 

The above computation shows also that $V$ is a parallel normal vector 
field to $\Sigma$ and $(R(e,V)e,V)=1$ for any vector $e\in T\Sigma$.

Let $V_1,\dots,V_d$ be a basis of the eigenspace associated to the 
eigen-value $-2$. Let $\boB$ be the sub-bundle of $NX$ generated by 
$V_1,\dots,V_d$: $\boB=\{(p,v)\in NX \mid v\in \Span (V_1(p),\dots, 
V_d(p))\}$. $\boB$ is parallelizable and, on $\boB$, the stability operator is 
$L=-\Delta^\perp-2$. So the index of $L$ restricted to $\boB$ is precisely $d$. 
If $d<n-2$, both hypotheses on $\Sigma$ implies that the restriction of $L$ 
to $\boB^\perp$ must have a negative eigenvalue. Thus there is an eigensection 
of eigenvalue $-2$ in $\boB^\perp$ contradicting the definition of $\boB$. So 
$d=n-2$ and $\boB=NX$ which ends the proof.
\end{proof}

The sequel of the proof is a generalization of the above argument. 

Let us fix $V$ an eigen-section of $L$
associated to the eigenvalue $-2$. By Theorem~\ref{thm:ancientmcf}, let 
$(\Sigma_t)_{t\in(-\infty,b)}$ be the ancient solution of the mean curvature 
flow flowing out of $\Sigma$ in the direction $V$. $(-\infty,b)$ is a maximal 
time interval of existence.

We look at the evolution of $F(\Sigma_t)$. We know that $\lim_{t\to-\infty} 
F(\Sigma_t)=0$. Computing its derivative, we obtain
\begin{align*}
\frac{d}{dt} F(\Sigma_t)&=\int_{\Sigma_t}-4\|\vec H\|^2+\int_{\Sigma_t}-4\|\vec 
H\|^4+\int_{\Sigma_t}-2\|\nabla^\perp\vec H\|^2\\
&\qquad\qquad2(R(e_i,\vec H)e_i,\vec H)+2(\vec 
H,B(e_i,e_j))^2\\
&\le \int_{\Sigma_t} 2[(R(e_i,\vec H)e_i,\vec H)-2\|\vec H\|^2)] +
\int_{\Sigma_t} 2(\vec H,\inter B(e_i,e_j))^2\\
&\le \int_{\Sigma_t}2\|\vec H\|^2 \|\inter B\|^2\le 4\sup_{\Sigma_t} \|\vec 
H\|^2 F(\Sigma_t)
\end{align*}

By construction of $\Sigma_t$, we know that close to $-\infty$, 
$\sup_{\Sigma_t} 
\|\vec H\|^2\le Ce^{4t}$. Using this and $\lim_{t\to-\infty} F(\Sigma_t)=0$, a 
Gronwall type argument gives $F(\Sigma_t)\le 0$ for any $t$ and then $F(t)=0$ 
for any $t$. So 
$\inter B=0$ on $\Sigma_t$, $K_{T\Sigma_t}=1$.

This also implies that the derivative of $F(\Sigma_t)$ vanishes: $\vec H$ is a 
parallel normal vectorfield along $\Sigma_t$ and $(R(e,\vec H)e,\vec H)=\|\vec 
H\|^2$ for any unit vector $e\in T\Sigma_t$. Since $\vec H$ is parallel, we 
have $\|H\|$ is constant along $\Sigma_t$ (notice that $\|\vec H\|\neq 0$ by 
construction). So we 
can write $\vec H=H_t\nu$ ($H_t=\|\vec H\|(t)$) where $\nu$ is a unit normal 
vector 
field to $\Sigma_t$. Moreover $\nu$ is parallel along $\Sigma_t$.

Let us define a new time parameter $s=s(t)=\int_{-\infty}^tH_udu$, so that 
$\frac{ds}{dt}=H_t$. Hence the derivative of $\Sigma_s$ with respect to $s$ is 
given by $\nu$.

If $q\in \Sigma_s$, the map $(a,b)\mapsto(R(a,b)a,b)$ defined for unit vectors 
$a$, $b\in T_qM$ is bounded above by $1$ and reaches its maximum at 
$(a,b)=(f,\nu)$ where $f\in T_q\Sigma_s$. So we have $(R(f,\nu)f,v)=0$ for any 
$v\in 
\nu^\perp$ and $(R(f,\nu)v,\nu)=0$ for any $v\in f^\perp$.

We can compute $\frac{\barre D}{ds}2\vec H_s$ in two ways:

\begin{align*}
&\qquad\frac{\barre D}{ds}2\vec H_s=2\frac{d}{ds}H_s \nu+2H_s\frac{\barre 
D}{ds}\nu 
\qquad \text{and}\\
\frac{\barre D}{ds}2\vec H_s&=\Delta^\perp 
\nu+(R(e_i,\nu)e_i)^\perp+(\nu,B(e_i,e_j))B(e_i,e_j)-(\vec 
H,\nabla_{e_i}\nu)e_i\\
&=2\nu+2H_s^2\nu
\end{align*}
Hence $\frac{\barre D}{ds}\nu=0$; the evolution follows geodesics 
and $\Sigma_s=\{\Phi(p,sV(p));p\in\Sigma\}$. Besides $\frac{d}{ds}H_s=1+H_s^2$, 
so  $H_s=\tan s$. 

Let $\gamma_p$ be the geodesic $s\mapsto \Phi(p,sV(p))$. We are going to study 
some Jacobi fields along $\gamma_p$. Let $f_1,\dots,f_{n-1}$ be parallel 
orthonormal fields along $\gamma_p$ such that $f_1,\dots,f_{n-1},\nu$ is 
orthonormal and, at $s=0$, $f_1, f_2$ is a basis of $T_pX$. For $i\in\{1,2\}$ 
we define $\pr_i$ the Jacobi field along $\gamma_p$ such that $\pr_i(0)=f_i$ 
and $\frac{\barre D}{ds}{\pr_i}(0)=0$. Actually if $e_i\in T_p\S^2$ is 
such that $X_*(e_i)=f_i$, we have $\pr_i=D_{e_i}(\Phi(\cdot,sV(\cdot))$ so 
$\pr_i(s)$ is tangent to $\Sigma_s$ as long as $\Sigma_s$ is well defined. 
$\pr_i$ is a Jacobi field 
so because of the above computations of the Riemann tensor
\[
0=\frac{\barre D^2}{ds^2}\pr_i+R(\nu,\pr_i)\nu=\frac{\barre 
D^2}{ds^2}\pr_i+\pr_i
\]
Decomposing this equation in $(f_1, \cdots,f_{n-1})$,
 we obtain that $\pr_i=\cos s f_i$. 
We have $(\pr_1,\pr_2)$ is an orthogonal basis of $T\Sigma_s$: $\Sigma_s$ is an 
immersion for $s\in[0,\pi/2)$. 

As a consequence the orthogonal of $T\Sigma_s$ is generated by 
$(\nu,f_3,\cdots, 
f_{n-1})$. Let $\pr_j$ ($j\ge 3$) be the Jacobi fields along 
$\gamma_p$ with $\pr_j(0)=0$ and $\frac{\barre D}{ds}\pr_j(0)=f_j$. We have 
$(R(\nu,\pr_j)\nu,f_i)=(\pr_j,f_i)$ for $i\in\{1,2\}$ and $s\in[0,\frac\pi2]$. 
So 
$(\pr_j,f_i)$ is solution of the ODE $y''+y=0$ with vanishing initial value and 
derivative. Thus $(\pr_j,f_i)=0$ and $\pr_j$ belongs to 
$\Span(f_3,\cdots,f_{n-1})$. Moreover Rauch comparison theorem implies that 
$\pr_j$, $j\in\{3,\dots,n-1\}$, are non vanishing on $(0,\pi)$.

We know that $NX$ is parallelizable so we can fix an isometric 
parametrization by $NX\simeq\S_1^2\times\R^{n-2}$ and we have a map 
\[
\Phi:\S_1^2\times\R^{n-2}\to M
\]
Using a polar decomposition of $\R^{n-2}$ as $\R_+\times \S^{n-3}$ and 
coordinates $(p,s,q)\in \S^2\times\R_+\times \S^{n-3}$, this gives a map $\Psi: 
\S^2\times\R_+\times \S^{n-3}\to M$ defined by $\Psi(p,s,q)=\Phi(p,sq)$. The 
above study of the Jacobi fields along the geodesic gives that the lift of the 
metric is given by
\[
\Psi^* g=\cos^2s g_{\S_1^2}+ds^2+g_{p,s}
\]
for $s\in[0,\pi/2]$ and $g_{p,s}$ is a smooth family of metrics on $\S^{n-3}$ 
depending on $(p,s)\in \S^2\times(0,\pi/2)$. Let us notice that 
$g_{p,s}=s^2g_{\S_1^{n-3}}+o_0(s^2)$ and $g_{\frac\pi2,p}$ is a well defined 
metric on $\S^{n-3}$.

As a consequence $\Psi(p,\frac\pi2,q)$ is a point $Q\in M$ that does not depend 
on $p$. If $\bar p$ is fixed $\Sigma'=\Psi(\bar p,\frac\pi2,\S^{n-3})$ is then 
an immersed submanifold of $M$. Let us study the geometry near $\Sigma'$. The 
geodesics $s\mapsto\Psi(p,s,q)$ arrive orthogonally to $\Sigma'$ when 
$s=\frac\pi2$. Let us fix $\bar q\in \S^{n-3}$ and define a map
\[
G:
\begin{array}{ccc}
\S^2&\to &U_{\bar q}X'^\perp\\
p&\mapsto&\frac d{ds}\Psi(p,s,\bar q)_{|s=\frac\pi2}
\end{array}
\]
where $U_{\bar q}X'^\perp$ is the unit sphere in the normal bundle $TX'^\perp$ 
at 
$\bar q$.

For $r\in[0,\pi/2]$ let us define $F_r:\S^2\to M, p\mapsto \exp_{\barre 
Q}(-rG(p))$. We have $F_r(p)=\Psi(\bar p,\frac\pi2-r,\bar q)$ so ${F_r}^*g=
\sin^2r g_{\S_1^2}$. So $G^*g_{\barre Q}=\lim_{r\to 
0}\frac1{r^2}{F_r}^*g=g_{\S_1^2}$. So $G$ is a linear isometry between $\S^2$ 
and $U_{\bar q}X'^\perp$. As a consequence $G(-p)=-G(p)$. Thus $\Psi( 
p,s+\frac\pi2,\bar q)=\exp_{\barre Q}(sG(p))=\exp_{\barre 
Q}(-sG(-p))=\Psi(-p,\frac\pi2-s,\bar q)$ for $s\in[0,\frac\pi2]$. So 
$\Psi(p,\pi,q)=\Psi(-p,0,q)=X(-p)$. This implies that $\pi$ is a conjugate time 
for the Jacobi fields $\partial_j$ ($3\le j\le n-1$). Then by Rauch comparison 
theorem, $|\pr_j(s)|=|\sin s|$ and $(R(\pr_j,\nu)\pr_j,\nu)=\sin^2 s$. Thus 
$R(f_j,\nu)f_j,\nu)=1$ for any $j$. So $\Psi^*g=\cos^2sg_{\S_1^2}+ds^2+\sin^2s 
g_{\S_1^{n-3}}$. As a consequence, $\Psi$ generates a local isometry $\Psi'$ 
from the joint of $\S^2$ with $\S^{n-3}$ endowed with the metric 
$\cos^2sg_{\S_1^2}+ds^2+\sin^2s 
g_{\S_1^{n-3}}$ to $M$. So $\Psi'$ is a local isometry and thus a covering 
map from $\S_1^n$ to $M$. Since $M$ is simply connected $\Psi'$ is a global 
isometry.
\end{proof}

The above result has a corollary concerning manifold containing an "equator". 
This an infinitesimal version of \cite[Corollary 5.11]{AnHo}.

\begin{cor}
Let $M$ be a Riemannian $n\ge 3$-manifold whose sectional curvature is bounded 
above by $1$. Let us assume that there is a totally geodesic isometric 
immersion $f : \S_1^{n-1}\to M$. Moreover, assume that $f$ is unstable as 
a minimal hypersurface. Then the universal cover of $M$ is isometric to 
$\S_1^n$.
\end{cor}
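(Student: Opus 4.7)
The plan is to apply Theorem~\ref{th:rigidity} after exhibiting, inside $f(\S_1^{n-1})$, an equatorial $2$-sphere whose index as a minimal surface in $M$ is at least $n-2$. Pick any equatorial $S_0\subset\S_1^{n-1}$; since $f$ is totally geodesic, $S=f(S_0)$ is a totally geodesic immersed $2$-sphere of area $4\pi$ in $M$. A unit normal $\nu$ to $f(\S_1^{n-1})$ satisfies $\na^M_X\nu=-A(X)=0$, so $\nu$ is parallel along $f(\S_1^{n-1})$, and the normal bundle $NS$ splits orthogonally as $N_1\oplus N_2$, where $N_1$ is the subbundle of vectors tangent to $f(\S_1^{n-1})$ (rank $n-3$) and $N_2=\R\nu$. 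The Codazzi identity for $f$ gives $(R^M(X,Y)Z,\nu)=0$ for $X,Y,Z$ tangent to $f(\S_1^{n-1})$, which implies that for $W\in N_1$ the vector $R(e_k,W)e_k$ has no $\nu$-component, while the same symmetry forces $R(e_k,\nu)e_k=K(e_k,\nu)\nu$. Consequently $L$ preserves $N_1\oplus N_2$ and $Q_S$ is block-diagonal, so the index of $S$ in $M$ splits as the sum of the indices of $Q_S|_{N_1}$ and $Q_S|_{N_2}$.

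For the $N_1$ block, the Gauss equation on the totally geodesic $f(\S_1^{n-1})$ gives $R(e_k,W)e_k=W$ for $W\in N_1$ orthogonal to $e_k$; hence the $n-3$ parallel sections $W_1,\dots,W_{n-3}$ trivializing $N_1$ satisfy $LW_j=2W_j$, and the Gram matrix $Q_S(W_i,W_j)=-8\pi\delta_{ij}$ is negative definite, contributing $n-3$ to the index. For the $N_2$ block the Jacobi form reduces to the scalar
\[
Q_S(\phi\nu,\phi\nu)=\int_S\bigl(|\na_S\phi|^2-\ric_S(\nu,\nu)\phi^2\bigr)d\sigma_S,
\]
where $\ric_S(\nu,\nu)=K(e_1,\nu)+K(e_2,\nu)$ is the partial Ricci trace along $T_pS$. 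The instability of $f$ provides $\phi:\S_1^{n-1}\to\R$ with $Q_f(\phi,\phi)=\int_{\S_1^{n-1}}(|\na\phi|^2-\ric_M(\nu,\nu)\phi^2)d\mathrm{vol}<0$. My plan is to average $Q_S(\phi|_S\nu,\phi|_S\nu)$ over the Grassmannian $\mathcal{E}$ of equatorial $2$-spheres in $\S_1^{n-1}$. At each point $p$, equators through $p$ correspond to $2$-planes $P\subset T_p\S_1^{n-1}$, and by $SO(n-1)$-invariance one has the pointwise identity $\mathrm{avg}_P \tr_P q=\frac{2}{n-1}\tr q$ for any quadratic form $q$ on $T_p\S_1^{n-1}$. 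Applied to $q(v)=(\pr_v\phi)^2$ and $q(v)=K(v,\nu)\phi^2$, and combined with a Fubini computation over the incidence variety $\{(p,S):p\in S\}$, this yields
\[
\int_\mathcal{E}\int_S\bigl(|\na_S\phi|^2-\ric_S(\nu,\nu)\phi^2\bigr)d\sigma_S\,d\mu(S)=\frac{2C}{n-1}Q_f(\phi,\phi)<0
\]
for a positive constant $C$. Hence some equator $S$ satisfies $Q_S(\phi|_S\nu,\phi|_S\nu)<0$, providing the missing negative direction in $N_2$. In total the index of $S$ in $M$ is at least $(n-3)+1=n-2$, and Theorem~\ref{th:rigidity} concludes that the universal cover of $M$ is isometric to $\S_1^n$.

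The main obstacle is the averaging step: translating the instability of $f$, which lives on all of $\S_1^{n-1}$, into a destabilizing direction on a specific equatorial $2$-sphere. This is a Crofton-type identity combining the local invariance average $\mathrm{avg}_P\tr_P=\frac{2}{n-1}\tr$ with a double-fibration Fubini. The block-decomposition of $L$ and the contribution from $N_1$ are comparatively routine once the Codazzi relation $(R^M(X,Y)Z,\nu)=0$ for tangent $X,Y,Z$ is in hand.
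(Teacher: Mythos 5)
Your argument is correct and follows essentially the same route as the paper: both contribute $n-3$ negative directions from the parallel normal fields tangent to $f(\S_1^{n-1})$, and both find the remaining direction in $\R\nu$ by integrating the form $\tr_{TS}\bigl(du\otimes du - u^2\,R(\nu,\cdot,\nu,\cdot)\bigr)$ over the Grassmannian of equatorial $2$-spheres and invoking a Crofton/Fubini identity to reduce to the instability $Q_{\S_1^{n-1}}(u,u)<0$. The only difference is cosmetic: you make explicit, via Codazzi, the block-diagonality of the Jacobi operator on $N_1\oplus\R\nu$ that the paper uses implicitly to justify adding the indices (with the caveat that $R(e_k,\nu)e_k = K(e_k,\nu)\nu$ should read $\bigl(R(e_k,\nu)e_k\bigr)^{\perp} = K(e_k,\nu)\nu$, since the left-hand side may have components tangent to $S$).
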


\begin{proof}
Let $G(2,n-1)$ be the set of totally geodesic $2$ spheres in $\S_1^{n-1}$: the 
intersections of $\S_1^{n-1}$ with any $3$-plane in $\R^n$. For any $S$ in 
$G(2,n-1)$, $f(S)$ is totally geodesic in $M$. Moreover $f(S)$ has index at 
least $n-3$. So it is enough to prove that one of these $S$ has index at least 
$n-2$ to conclude by Theorem~\ref{th:rigidity}. For any $S\in G(2,n-1)$, let 
$Q_S$ be the quadratic form 
associated to the Jacobi operator on $f(S)$.

Let $\nu$ be the unit normal to $f(\S_1^{n-1})$. For $S\in G(2,n-1)$, 
$\Span(\nu)$ is a parallel normal bundle along $f(S)$. Let $\lambda_0<0$ be the 
first eigenvalue of the Jacobi operator on $f(\S_1^{n-1})$ and $u$ a first 
eigenfunction. On $\S_1^{n-1}$, we can define a quadratic form $q=du\otimes du 
- (R(\nu,\cdot)\nu,\cdot)$. Then the quadratic form $Q_{\S_1^{n-1}}$ associated 
to 
the Jacobi operator on $f(\S_1^{n-1})$ satisfies to 
\[
Q_{\S_1^{n-1}}(u,u)=\int_{\S_1^{n-1}}\tr_{T\S_1^{n-1}}q
=\lambda_0\int_{S_1^{n-1}}u^2<0
\]

Now there is a dimensional constant $c_{n}$ such that
\begin{align*}
c_n\int_{\S_1^{n-1}}\tr_{T\S_1^{n-1}}q&=\int_{G(2,n-1)}dS\int_S\tr_{TS}q\\
&=\int_{G(2,n-1)}Q_S(u\nu,u\nu) dS
\end{align*}
where $\tr_P$ denotes the trace operator on the subspace $P$ and $dS$ is the 
Haar measure on $G(2,n-1)$ coming from the Haar measure on the Grassmannian of 
$3$-planes in $\R^{n+1}$.
Thus  there is $S$ such that $Q_S(u\nu,u\nu)<0$. So the restriction to 
$\Span(\nu)$ of the stability operator for $S$ is unstable: $S$ has index at 
least $n-2$.
\end{proof}

\section{Rigidity of Simon-Smith width}\label{sec:minmax}

Here we state Smith theorem concerning the existence of a minimal 
sphere in any Riemannian $3$-sphere $(\S^3,g)$ and our second rigidity result.

We start with the standard sweep-out of the sphere $\S^3$ given by 
horizontal spheres
\[
S_t=\{(x_1,x_2,x_3,x_4)\in \S^3\mid x_4=t\} \text{ for } t\in[-1,1]
\]
If $F_t:\S^3\to \S^3$ ($t\in[-1,1]$) is a smooth family of diffeomorphisms 
isotopic to the identity map, we can defined a general sweep-out of $\S^3$ by 
spheres as the family $\sigma_t=F_t(S_t)$ ($t\in[-1,1]$). Let $\Lambda$ denote 
the set of all general sweep-outs of $\S^3$ by spheres. The Simon-Smith width 
of a Riemannian $3$-sphere $(\S^3,g)$ is then defined by
\[
W(\S^3,g)= \inf_{\{\sigma_t\}\in \Lambda}(\max_{t\in[-1,1]} \boA(\sigma_t))
\]
For $\sigma\in \Lambda$, we denote $L(\sigma)=\max_{t\in[-1,1]}\boA(\sigma_t)$. 
So a 
minimizing sequence for the Simon-Smith width $W$ is a sequence 
$(\sigma^n)$ in $\Lambda$ such that $\lim L(\sigma^n)=W$. For such a minimizing 
sequence, one can consider a sequence $(t_n)\in[-1,1]$ such that $\lim 
\boA(\sigma_{t_n}^n)=W$. Such a sequence is called a min-max sequence.
The main result in \cite{Smi} is

\begin{thm}[Smith]\label{th:simonsmith}
There is a min-max sequence  that converges in the sense 
of varifolds to a disjoint union of embedded minimal spheres (possibly with 
multiplicity) whose area is $W(\S^3,g)$.
\end{thm}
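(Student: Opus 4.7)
The plan is to implement the Simon--Smith min-max procedure, which is an adaptation of the Almgren--Pitts construction tailored to sweep-outs by $2$-spheres in $\S^3$. The starting point is any minimizing sequence $(\sigma^n) \subset \Lambda$ with $L(\sigma^n) \to W(\S^3,g)$. The first step would be to apply a \emph{pull-tight} argument: by flowing the slices of $\sigma^n$ via a carefully chosen ambient vector field supported away from stationary varifolds, one can replace $(\sigma^n)$ by a new minimizing sequence, still in $\Lambda$, for which every subsequential varifold limit of the slices $\sigma^n_{t_n}$ with $\boA(\sigma^n_{t_n})\to W$ is stationary. Compactness of Radon measures with bounded mass then produces a min-max sequence $\sigma^n_{t_n}$ whose varifold limit $V$ is stationary with mass $W(\S^3,g)$.

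The second step is to upgrade stationarity to regularity. Here one invokes the \emph{combinatorial argument} of Almgren--Pitts and Smith, which shows that by further modifying the sequence inside small balls one may assume each slice is \emph{almost minimizing} in sufficiently many annuli. The almost-minimizing property implies that in every such annulus the limit is a stable minimal surface, and gluing these local stable replacements produces a smooth embedded minimal surface in $\S^3$, so $V$ is supported on a disjoint union of smooth embedded minimal surfaces with positive integer multiplicities. This regularity step is the main obstacle and the technical heart of \cite{Smi}; it requires a delicate interpolation argument to perform local replacements while staying in the class $\Lambda$ of sweep-outs by $2$-spheres.

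The final step is to control the topology of the components. Since the slices $\sigma^n_t$ are images of $2$-spheres under ambient diffeomorphisms, they are embedded $2$-spheres. Smith's key genus-bound argument (using that in a one-parameter family the genus cannot jump downward under smooth convergence away from finitely many points, together with an analysis of the possible singular necks that could form) shows that each connected component of the limit surface has genus $0$. Therefore the limit varifold is supported on a disjoint union of embedded minimal $2$-spheres, counted with multiplicity, whose total area equals $W(\S^3,g)$, which is precisely the claim. The main difficulty throughout is coordinating the local replacement procedure with the global requirement that the deformed slices remain topological spheres within a sweep-out isotopic to the standard one.
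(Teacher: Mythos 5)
The paper does not prove Theorem~\ref{th:simonsmith}; it is cited as a black box from Smith's thesis \cite{Smi}, and Section~5.1 merely recalls the structure of that proof following Colding--De Lellis \cite{CoDeL}. Your outline---pull-tight to obtain a stationary varifold limit, the almost-minimizing property in small annuli, local stable replacements to get smooth embeddedness, and a genus argument to conclude the components are spheres---is precisely the standard Simon--Smith route, so you have identified the right proof. One correction worth making: in the genus step you assert the genus ``cannot jump downward'' under the near-smooth convergence, but the inequality needed runs the other way. What Smith (and, in sharpened form, later work of Ketover) proves is that the genus of the limit surface is bounded \emph{above} by the genus of the sweepout slices; since the slices are spheres and genus is nonnegative, this pins the limit components to genus $0$. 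As stated, your version of the genus bound would be vacuous and would not yield the sphere conclusion.
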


Moreover the quantity $W(S^3,g)$ is positive so the collection of minimal 
$2$-spheres is not empty.

If the sectional curvature of $(\S^3,g)$ is bounded above by $1$ the area of 
each sphere in the collection is at least $4\pi$ so $W(\S^3,g)\ge 4\pi$. We 
notice that for the Euclidean sphere $\S_1^3$ we have $W(\S_1^3)=4\pi$. The 
main result of this section is a rigidity result for the equality case.

\begin{thm}\label{th:rigidityminmax}
Let $(\S^3,g)$ be a Riemannian $3$-sphere whose sectional curvature is bounded 
above by $1$. Then $W(\S^3,g)\ge 4\pi$ and, if $W(\S^3,g)=4\pi$, then 
$(\S^3,g)$ is isometric to $\S_1^3$.
\end{thm}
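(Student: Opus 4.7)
The strategy is to combine Smith's min-max theorem (Theorem~\ref{th:simonsmith}) with the first rigidity result (Theorem~\ref{th:rigidity}). The lower bound is immediate; the equality case reduces to producing a min-max realization of positive index so that Theorem~\ref{th:rigidity} applies with $n-2=1$.

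First, by Theorem~\ref{th:simonsmith}, $W(\S^3,g)$ is the total area, counted with multiplicity, of a finite collection of embedded minimal $2$-spheres. The Gauss/Gauss--Bonnet computation at the start of the proof of Theorem~\ref{th:rigidity} shows that each such component has area at least $4\pi$, so $W(\S^3,g)\ge 4\pi$. If $W(\S^3,g)=4\pi$, the collection reduces to a single embedded minimal $2$-sphere $\Sigma$ of area exactly $4\pi$ with multiplicity one, and $F(\Sigma)=0$ forces $\Sigma$ to be totally geodesic and isometric to $\S_1^2$. By the Marques--Neves index bound for one-parameter min-max quoted in the introduction, one can arrange a min-max realization of index at most $1$. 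If such a realization has index exactly $1$, Theorem~\ref{th:rigidity} applies and gives $(\S^3,g)\cong\S_1^3$.

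The delicate case is when every min-max sequence with width tending to $4\pi$ produces a \emph{stable} limit sphere. I would rule this out by a sweep-out modification. On a stable totally geodesic $\Sigma\cong\S_1^2$, the Jacobi operator acting on normal sections $u\nu$ reduces to $Lu=\Delta u+\ric(\nu,\nu)u$; the constant test function $u\equiv 1$ in the stability inequality gives $\int_\Sigma\ric(\nu,\nu)\le 0$, and if the first eigenvalue $\lambda_0$ vanishes the constant is itself an eigenfunction, forcing $\ric(\nu,\nu)\equiv 0$ along $\Sigma$. In this borderline case, an implicit function argument analogous to Theorem~\ref{thm:solmcf} (applied at the zero eigenvalue of $\Lbf$) integrates the Jacobi kernel to a local foliation of a neighborhood of $\Sigma$ by totally geodesic minimal $2$-spheres of area $4\pi$; concretely, an isometric embedding of a slab $\S_1^2\times(-\epsilon,\epsilon)$ into $M$. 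Inserting this slab into the standard sweep-out and composing with a tangential perturbation, I would construct a new sweep-out whose maximum area is strictly below $4\pi$, contradicting $W(\S^3,g)=4\pi$. The strict case $\lambda_0>0$ is easier: $\Sigma$ is then isolated and a direct perturbation of the sweep-out along its lowest positive-direction lowers the max.

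The main obstacle is exactly this sweep-out modification in the stable subcase, because the $C^{1,1}$ example of Figure~\ref{fig:fig1} shows the rigidity genuinely fails in the non-smooth regime with $W=4\pi$ realized by a stable sphere. Thus the argument must crucially use the \emph{smoothness} of $g$: the locally produced cylindrical foliation cannot close up smoothly into a capped cylinder without violating $K\le 1$ on the caps, and it is this global obstruction that powers the strict area decrease producing the contradiction. Quantifying the ``not almost stable'' condition announced in the introduction---so that a uniform negative eigenvalue can be propagated to the min-max limit---is where the essential new geometric work lies.
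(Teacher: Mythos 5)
Your reduction is structurally correct --- lower bound from Gauss--Bonnet, single multiplicity-one sphere at equality, and appeal to Theorem~\ref{th:rigidity} when the index is at least~$1$ --- and this matches the paper. But your treatment of the index-$0$ case contains genuine errors and omits the hard technical work.

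The eigenfunction claim is wrong: with $Lu=\Delta u+\ric(\nu,\nu)u$, the Rayleigh quotient of $u\equiv1$ equals $-\int_\Sigma\ric(\nu,\nu)/\boA(\Sigma)$, which stability only bounds below by $\lambda_0$; when $\lambda_0=0$ this gives $\int_\Sigma\ric(\nu,\nu)\le0$ but nothing forces equality, so the constant need not be the ground state and $\ric(\nu,\nu)\equiv0$ does not follow. (In fact the paper's final contradiction produces the opposite: $\ric(\nu)\equiv 2$ on the limiting leaf.) Even granting $\ric(\nu,\nu)\equiv0$, a vanishing first eigenvalue does not integrate the Jacobi kernel into an isometric slab $\S_1^2\times(-\eps,\eps)$; what one can get is a one-parameter canonical family with $H_{w_t}$ proportional to the ground state (the paper's Lemma~\ref{lem:foliat}), whose leaves need not be minimal, totally geodesic, or of area $4\pi$. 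Your perturbation-of-the-sweep-out arguments in both the $\lambda_0>0$ and $\lambda_0=0$ subcases also presume that the min-max sequence sits near $\Sigma$ as a graph; it is controlled only in the varifold topology. The paper precisely fills this gap: Proposition~\ref{prop:currentlimit} upgrades varifold convergence to flat convergence, the notion of \emph{almost stable} minimal hypersurfaces is introduced, and Propositions~\ref{prop:locmin1}--\ref{prop:neigh} produce a flat-metric neighborhood $\boN$ of the family $\boM$ of minimal leaves on whose boundary the mass is at least $\boA(\Sigma)+\delta$, forcing any sweep-out entering $\boN$ to have maximal area at least $4\pi+\delta$. That barrier rules out an almost stable realization; the remaining case (vanishing $\lambda_0$, not almost stable) is closed by a Gronwall-type argument on the $F$-functional along a modified canonical foliation near the extremal leaf $\barre S$, yielding $F\equiv0$ and hence $\ric(\nu)\equiv2$, contradicting $\lambda_0=0$. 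You correctly identified via the $C^{1,1}$ example that the stable subcase is the crux, but the mechanism you propose --- a constant Jacobi kernel producing a cylindrical slab and a sweep-out strictly below $4\pi$ --- neither follows from the hypotheses nor replaces the almost-stable barrier estimate and the $F$-functional computation that actually carry the proof.
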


In order to give the proof of this result we need to understand the index 
property of the minimal spheres that realize $W(\S^3,g)$. In the sequel we 
introduce de notion of almost stable minimal hypersurfaces. We will use concept 
from geometric measure theory, for the notations we refer to \cite{CoDeL} and 
\cite{MaNe6}.

\subsection{The Simon-Smith min-max surface}

In this section we recall some aspect of the min-max construction by Smith in 
\cite{Smi}. Actually we mainly refer to the statements contained in the paper 
by Colding and De Lellis \cite{CoDeL}.

If the varifold $V$ is a varifold limit of some min-max-sequence we say that 
$V$ is a 
min-max varifold. So the proof of Theorem~\ref{th:simonsmith} consists in 
finding a min-max varifold which is smooth and has the right topological type.

The first step in this proof is that there is a minimizing sequence 
$(\sigma^n)$ such that any min-max varifold coming from $(\sigma^n)$ is 
stationary (see \cite[Proposition~4.1]{CoDeL}).
The second step toward regularity use the notion of almost-minimizing surface.

\begin{defn}
Given $\eps>0$, an open set $U\in \S^3$ and a surface $\Sigma$. $\Sigma$ is 
$\eps$-almost minimizing in $U$ if there is no isotopy $\psi$ supported in $U$ 
such 
that
\begin{gather*}
\boA(\psi_t(\Sigma))\le \boA(\Sigma)+\eps/8 \text{ for all }t\\
\boA(\psi_1(\Sigma))\le \boA(\Sigma)-\eps
\end{gather*}
A  sequence $(\Sigma_n)$ is said to be almost minimizing in $U$ if $\Sigma_n$ 
is $\eps_n$-almost minimizing in $U$ for some sequence $\eps_n\to 0$
\end{defn}

For $p$ and $r>0$, we define the set of annuli centered at $p$ of outer radius 
less that $r$ $\boA\boN_r(p)=\{B_t(p)\setminus \barre B_s(p), 0<s<t<r\}$. Then 
one can select a positive function on $\S^3$ and a min-max sequence 
$(\sigma_{t_n}^n)$ which is almost minimizing in any small annuli: 
\textit{i.e.} in any $A\in \boA\boN_{r(p)}(p)$ 
for all $p\in \S^3$ (see \cite[Proposition~5.1]{CoDeL}). Then the author proves 
that if $\sigma_{t_n}^n\to V^*$ as varifold, $V^*$ has the 
expected properties. Concerning smoothness, the author introduce the notion of 
replacement.

\begin{defn}
Let $V$ be a stationary varifold and $U$ is an open subset of $\S^3$. A 
stationary varifold 
$V'$ is said to be a replacement of $V$ in $U$ if
\begin{itemize}
\item $V'=V$ on $G(M\setminus \barre U)$ and $\|V'\|(M)=\|V\|(M)$ 
\item $V'\llcorner U$ is supported by a stable minimal surface $\Sigma$ with 
$\barre \Sigma\setminus \Sigma\subset \partial U$.
\end{itemize}
\end{defn}

The proof of the smoothness $V^*$ consist basically in proving that $V^*$ has 
replacements in any small annuli and that they coincide with $V^*$ (see 
\cite[Theorem~7.1]{CoDeL}). 

Let $(\Sigma_n)=(\sigma_{t_n}^n)$ we know that $\Sigma_n\to V^*$ as varifold. 
Moreover, viewing $\Sigma_n$ as a flat cycle, we can assume that $\Sigma_n \to 
T\in \boZ_2(M,\Z_2)$ as current. So 
one can ask the relation between $T$ and $V^*$. Since $V^*$ is made of smooth 
surface with integer multiplicities, there is a corresponding element 
$[V^*]\in\boZ_2(M,\Z_2)$ by reducing each multiplicities mod $2$. So one can 
suspect that $T=[V^*]$. This is confirmed by the result below (we are inspired 
by a similar result for the Almgrenn-Pitts min-max theory by Marques ans Neves 
\cite[Proposition 4.10]{MaNe5}). Actually the 
support of $T$ has to be contained in $\spt V^*$. So if $\{S_i\}_{1\le i\le N}$ 
is the collection of connected 
components of $\spt V^*$, the Constancy Theorem implies that $T=n_1S_1+\cdots 
+n_NS_N$ for some $n_i\in\{0,1\}$.

\begin{prop}\label{prop:currentlimit}
Let $(\Sigma_n)$ be a min-max sequence which is almost-minimizing in any small 
annuli and such that $\Sigma_n\to V$ as varifold and
\[
V=m_1 S_1+\cdots +m_N S_N
\]
where each $S_i$ is a smooth embedded minimal surface. Let us assume that 
$\Sigma_n\to T\in\boZ_2(M,\Z_2)$ as current and 
\[
T=n_1 S_1+\cdots +n_N S_N
\]
with $n_i\in\{0,1\}$. Then $m_i=n_i\mod 2$.
\end{prop}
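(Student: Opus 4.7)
The plan is to localize both convergences at a generic smooth point of $S_i$ and read off the parity from a local sheeting statement for $\Sigma_n$.

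First, I would fix $i\in\{1,\dots,N\}$ and choose a point $p\in S_i$ lying on none of the other smooth components $S_j$; such points exist since the $S_j$'s are finitely many smooth surfaces meeting in lower-dimensional sets. Pick a small geodesic ball $B$ around $p$ such that $B\cap\spt(V)=B\cap S_i$ and $B\cap S_i$ is a smooth embedded disk. Restricting both modes of convergence to $B$ yields $\Sigma_n\cap B\to m_i[S_i\cap B]$ as varifolds and $[\Sigma_n\cap B]_2\to n_i[S_i\cap B]_2$ in the mod-$2$ flat topology.

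The main step is the following local sheeting claim: for $n$ large, $\Sigma_n\cap B'$ on a slightly smaller concentric ball $B'\subset B$ is a disjoint union of $m_i$ embedded smooth disks, each a normal graph of a function $u_n^j$ over $S_i\cap B'$ with $u_n^j\to 0$ in $C^1$. This is the delicate step and, in my view, the main obstacle of the proof: it upgrades varifold convergence at a smooth multiplicity-$m_i$ point of the limit to smooth multi-sheet convergence of an almost-minimizing sequence. I would obtain it from the Colding--De Lellis replacement machinery in \cite{CoDeL} used to prove smoothness of $V$: the almost-minimizing property of $(\Sigma_n)$ in every annulus contained in $B$ produces stable replacements that agree with $V$, so each sheet of $\Sigma_n$ is $C^1$-close to $S_i$, and the varifold mass constraint forces the number of sheets to be exactly $m_i$.

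Once the sheeting is in hand the conclusion is formal. Each graph $\mathrm{graph}(u_n^j)$ is an embedded orientable disk converging in $C^1$, hence in the mod-$2$ flat topology on $B'$, to $S_i\cap B'$. Additivity of mod-$2$ rectifiable cycles over a disjoint union then gives
\[
[\Sigma_n\cap B']_2=\sum_{j=1}^{m_i}[\mathrm{graph}(u_n^j)]_2\ \longrightarrow\ m_i[S_i\cap B']_2=(m_i\bmod 2)\,[S_i\cap B']_2
\]
in $B'$. Comparing with the limit $n_i[S_i\cap B']_2$ coming from the restriction of $\Sigma_n\to T$ to $B'$ and using uniqueness of mod-$2$ flat limits yields $m_i\equiv n_i\pmod 2$ for every $i$, as required.
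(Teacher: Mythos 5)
Your plan isolates the right conclusion, but the ``local sheeting claim'' that you identify as the delicate step is precisely where the proof breaks down, and it does not follow from the Colding--De Lellis replacement machinery in the way you suggest. The replacement theory establishes regularity of the \emph{varifold limit} $V$ by producing stable replacements inside annuli, and it is these replacements --- not the original surfaces $\Sigma_n$ --- that are controlled. An almost-minimizing sequence can perfectly well fail to be a union of $m_i$ disjoint normal graphs near a smooth multiplicity-$m_i$ point of $V$: thin necks joining nearby sheets, or small bubbles, have negligible area (hence are invisible to the varifold limit) and do not contradict the $\eps_n$-almost-minimizing property. Nothing in the regularity argument upgrades varifold convergence of $\Sigma_n$ to $C^1$ multi-sheet convergence of $\Sigma_n$ itself. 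So the main step of your argument is asserted, not proved, and it is in fact not available.

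The paper's proof takes a genuinely different route that avoids sheeting of $\Sigma_n$ altogether. It minimizes area among isotopies of $\Sigma_n$ supported in a small ball $B_\eps(q)$ inside an annulus $A$ where almost-minimality holds, and invokes the Meeks--Simon--Yau theorem (Theorem~\ref{th:msy}), which says that for such a minimizing sequence the varifold limit and the mod-$2$ flat limit agree on $B_\eps(q)$. This is the step that substitutes for your sheeting: the minimized surfaces, not $\Sigma_n$, converge graphically, and since isotopies supported in a ball preserve the mod-$2$ homology class there, the parity of the current is preserved. Passing to limits in $k$ and $n$ and using the replacement lemma (Lemma~\ref{lem:replacement}) identifies the limiting varifold with $V$ in $A$, and then the Constancy Theorem forces $T\llcorner A=[V\llcorner A]$, hence $n_i\equiv m_i\ (\mathrm{mod}\ 2)$. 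If you want to keep the shape of your argument, you must replace ``$\Sigma_n\cap B'$ sheets'' with ``an area-minimizing isotope of $\Sigma_n$ in $B'$ sheets and is homologous mod $2$ to $\Sigma_n\llcorner B'$,'' and then justify that the limit of these isotopes is still $V$ via replacements --- which is exactly the paper's argument.
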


One tool in the proof of the smoothness of $V^*$ is the following result of 
Meeks, Simon and Yau.
\begin{thm}[Meeks-Simon-Yau \cite{MeSiYa}]\label{th:msy}
Let $\Sigma$ be a surface in $M$ and $U$ an open subset of $M$.
Let $(\Sigma_k)$ be a minimizing sequence for the Problem $(\Sigma,\Iso(U))$, 
\textit{i.e.} $\Sigma_k=\psi(\Sigma)$ for some $\psi\in\Iso(U)$ and
\[
\boA(\Sigma_k)\to \inf_{\psi\in \Iso(U)}\boA(\psi(\Sigma)),
\] 
which converges to a varifold $V$. Then $V\llcorner U$ is an integer 
rectifiable varifold whose support is a stable minimal surface $\Gamma$ with 
$\barre \Gamma\setminus \Gamma\subset \partial U$. Moreover as current 
$\Sigma_k\llcorner U\to [V\llcorner U]$ in $\mathbf{I}_2(U,\Z_2)$.
\end{thm}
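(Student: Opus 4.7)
My plan is to establish Theorem~\ref{th:msy} by the standard Meeks--Simon--Yau strategy: first show stationarity of the limit varifold $V$ in $U$, then upgrade almost-minimality to smoothness and stability of the support via a local $\gamma$-reduction and replacement procedure, and finally control the mod~$2$ current limit. The philosophy is that the isotopy-minimization hypothesis provides enough local control to rule out the concentration and folding phenomena that could otherwise obstruct smoothness in three dimensions.

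First I would prove that $V$ is stationary in $U$: for any smooth vector field $X$ compactly supported in $U$ its time-$s$ flow $\varphi_s$ lies in $\Iso(U)$, so $\varphi_s\circ\psi_k(\Sigma)$ is another competitor for the problem $(\Sigma,\Iso(U))$; since $\Sigma_k$ is minimizing, $\frac{d}{ds}\big|_{s=0}\boA(\varphi_s(\Sigma_k))\to 0$, giving $\delta V(X)=0$. The same argument applied to isotopies supported inside a ball $B\Subset U$ shows that the sequence is almost minimizing in $B$: no isotopy supported in $B$ can decrease the area of $\Sigma_k$ by more than $o_k(1)$.

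Next I would replace $(\Sigma_k)$ on each such $B$ by an embedded sequence $(\widetilde\Sigma_k)$ obtained by iteratively applying Meeks--Simon--Yau $\gamma$-reductions: each reduction strictly decreases area by performing surgery on a small embedded disk, is supported in $B$, and preserves embeddedness. By the stability of the resulting almost minimizers and the curvature estimate for stable minimal surfaces in a $3$-manifold (established in MSY by combining a convex-hull barrier argument with the monotonicity formula), the second fundamental forms of $\widetilde\Sigma_k$ are uniformly bounded on compact subsets of $B$. Arzel\`a--Ascoli then furnishes a subsequence converging smoothly with integer multiplicities to a smooth stable embedded minimal surface $\Gamma_B\subset B$ with $\overline{\Gamma_B}\setminus\Gamma_B\subset \partial B$. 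Since $\boA(\widetilde\Sigma_k)\to\|V\|(B)$ by construction, $V\llcorner B$ must be the integer rectifiable varifold associated to $\Gamma_B$ with these multiplicities; patching over a countable covering of $U$ by such balls then globalizes to $V\llcorner U$ and produces the surface $\Gamma$ in the statement.

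Finally I would establish the mod~$2$ current convergence. Each $\widetilde\Sigma_k$, being embedded and closed, defines a class in $\mathbf{I}_2(M,\Z_2)$, and smooth graphical convergence to $\Gamma_B$ with integer multiplicity $m_B\in\N$ forces $[\widetilde\Sigma_k\llcorner B]\to [V\llcorner B]$ in $\mathbf{I}_2(B,\Z_2)$, where only the parity of $m_B$ contributes. The $\gamma$-reductions differ from identity isotopies only by disk-surgeries whose mod~$2$ contributions are boundaries, so $[\Sigma_k\llcorner U]$ and $[\widetilde\Sigma_k\llcorner U]$ agree asymptotically as mod~$2$ currents, yielding the claimed convergence. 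The main obstacle is the regularity step: producing the $\gamma$-reduced sequence that is simultaneously embedded, almost area-minimizing, and carries a stability inequality strong enough to deliver uniform curvature bounds, while ensuring that the reduction terminates with total mass close to $\|V\|(B)$. This is precisely where the three-dimensional, codimension-one setting together with the isotopy (rather than free) minimization is indispensable; once it is done, the remaining compactness and measure-theoretic patching are routine.
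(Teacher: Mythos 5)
The paper does not actually prove this statement; it cites the regularity portion to Meeks--Simon--Yau \cite{MeSiYa} and the mod~$2$ current convergence to Almgren--Simon \cite[Remark~5.20]{AlSi}. So any genuine comparison has to be with those sources. Your sketch captures the broad outline of the MSY strategy (stationarity via isotopies generated by vector fields, $\gamma$-reduction, curvature estimates, smooth subsequential convergence, parity tracking for the mod~$2$ current), and the stationarity and last steps are on target, which is what the paper itself emphasizes when invoking Almgren--Simon.

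However, there is a genuine gap in the regularity step as you state it. You write that ``by the stability of the resulting almost minimizers and the curvature estimate for stable minimal surfaces,'' the second fundamental forms of $\widetilde\Sigma_k$ are uniformly bounded. But the $\gamma$-reduced surfaces $\widetilde\Sigma_k$ are merely isotopic images of $\Sigma$ whose areas are close to the infimum; they are neither smooth minimal surfaces nor stable ones, so Schoen's curvature estimate does not apply to them. There is no a priori pointwise control of $\widetilde\Sigma_k$, and smooth Arzel\`a--Ascoli convergence is not available directly. The way MSY (and, in the exposition the paper actually follows, Colding--De Lellis \cite{CoDeL}) close this gap is through a \emph{replacement} argument: inside small balls one solves a genuine minimization problem to replace $\widetilde\Sigma_k$ by a \emph{stable embedded minimal} competitor of no larger area, applies the curvature estimate to these replacements (which \emph{are} stable minimal surfaces), takes their smooth limit, and then shows by a removable-singularity and uniqueness argument that this replacement limit coincides with the original varifold limit $V$ on the ball. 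Without this intermediate replacement step, the passage from ``almost minimizing'' to ``smooth with bounded curvature'' is unjustified, and the patching argument that produces $\Gamma$ does not get off the ground. The rest of your sketch — including the observation that disk surgeries in $\gamma$-reduction change the mod~$2$ class only by boundaries, and the reduction of current convergence to smooth graphical convergence with integer multiplicities — is consistent with how the paper assembles the statement from the literature.
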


The proof of the regularity of $V\llcorner U$ is local so is contained in 
\cite{MeSiYa}. As above this regularity implies the existence of flat chain mod 
$2$, $[V\llcorner U]\in \mathbf{I}_2(U,\Z_2)$. The proof of the convergence as 
current is a byproduct of the regularity proof as noticed by Almgren and Simon 
in \cite[Remark 5.20]{AlSi}.

As mentioned above the smoothness proof uses the fact that replacement of $V^*$ 
coincide with $V^*$. This is a natural property of stationary varifold with 
smooth support.

\begin{lem}\label{lem:replacement}
Let $V$ be a integer stationary varifold whose support is a smooth minimal 
surface $S$ and $p\in S$. Then there is $\rho>0$ such the following is true: 
if 
$V'$ is a stationary varifold such that $V'\llcorner B_\rho(p)^c=V\llcorner 
B_\rho(p)^c$, then $V'=V$.
\end{lem}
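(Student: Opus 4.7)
The plan is to proceed in three steps: reduce $V$ to $k\mathbf{v}(S)$ in a small ball via the Constancy Theorem, pin down the support of $V'$ to $S$ in that ball via a strong maximum principle, and then match multiplicities via Constancy again.

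\textbf{Set-up.} Choose $\rho>0$ small enough that $\overline{B_{2\rho}(p)}$ lies in a convex normal coordinate chart centered at $p$ and $\Sigma_0 := S \cap B_{2\rho}(p)$ is an embedded smooth minimal disk; in particular $\Sigma_0$ is connected. Since $V$ is an integer rectifiable stationary varifold whose support in $B_{2\rho}(p)$ is the connected smooth minimal surface $\Sigma_0$, the Constancy Theorem gives a positive integer $k$ with $V\llcorner B_{2\rho}(p) = k\,\mathbf{v}(\Sigma_0)$. The hypothesis then reads
\[
V'\llcorner B_\rho(p)^c \;=\; k\,\mathbf{v}(S\setminus B_\rho(p)).
\]

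\textbf{Support containment.} For one direction, $\spt V'$ contains the open piece $\Sigma_0 \setminus \overline{B_\rho(p)}$ of the connected smooth minimal surface $\Sigma_0$, so by the strong maximum principle of Solomon--White for stationary integer $k$-varifolds touching a smooth $k$-dimensional minimal submanifold, $\spt V' \supset \Sigma_0$.

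For the reverse inclusion, I argue by contradiction: suppose there is $q \in (\spt V' \cap \overline{B_\rho(p)})\setminus \Sigma_0$. Using the normal exponential map of $\Sigma_0$, define $\mathcal{T}_r = \{x \in B_{2\rho}(p) : \mathrm{dist}(x,\Sigma_0)<r\}$. For $r$ small enough (which is guaranteed by shrinking $\rho$) the level hypersurfaces $\partial\mathcal{T}_r$ are smooth and mean-convex with mean-curvature vector pointing into $\mathcal{T}_r$; this is the leading-order behavior of equidistant tubes around a minimal submanifold, which a direct Riccati computation confirms, the ambient curvature entering only at subleading order. Set
\[
r_\star \;=\; \max\bigl\{\mathrm{dist}(x,\Sigma_0)\,:\, x\in\spt V' \cap \overline{B_\rho(p)}\bigr\}\;>\;0,
\]
and let $q^\star$ attain this maximum; then $q^\star \in B_\rho(p)$ (since $\spt V' \setminus B_\rho(p)\subset\Sigma_0$) and $\spt V' \subset \overline{\mathcal{T}_{r_\star}}$ touches the smooth mean-convex barrier $\partial \mathcal{T}_{r_\star}$ tangentially at $q^\star$. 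White's strong maximum principle for stationary varifolds against mean-convex barriers (valid in arbitrary codimension) is violated, giving the contradiction.

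\textbf{Multiplicity.} With $\spt V' \cap B_\rho(p) \subset \Sigma_0$, the restriction $V'\llcorner B_\rho(p)$ is an integer stationary varifold supported on the connected smooth minimal surface $\Sigma_0 \cap B_\rho(p)$. The Constancy Theorem yields a nonnegative integer $m$ with $V'\llcorner B_\rho(p) = m\,\mathbf{v}(\Sigma_0\cap B_\rho(p))$. Continuity across $\partial B_\rho(p)\cap\Sigma_0$ together with the exterior identification $V'\llcorner B_\rho(p)^c = k\,\mathbf{v}(S\setminus B_\rho(p))$ forces $m=k$, hence $V'=V$ on $B_{2\rho}(p)$, and the exterior agreement extends this to all of $M$.

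\textbf{Main obstacle.} The delicate step is the reverse inclusion $\spt V' \subset \Sigma_0$. It reduces to producing, for all sufficiently small radii $r_\star$, a smooth mean-convex foliation of a tubular neighborhood of $\Sigma_0$ and invoking a strong maximum principle for stationary varifolds of codimension $n-2$ against a hypersurface barrier. The Riccati-type control of the mean curvature of the tubes $\partial\mathcal{T}_r$ as $r\to 0$ is the one quantitative input that has to be tracked and that fixes how small $\rho$ must be taken.
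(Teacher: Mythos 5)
Your overall architecture (localize by Constancy, trap $\spt V'$ inside $S$ with a mean-convex barrier and a strong maximum principle, then match multiplicities by Constancy again) is the same as the paper's, but the specific barrier you build does not work in the case the paper actually needs, and this is a genuine gap.

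You use the equidistant tubes $\partial\mathcal{T}_r$ around $\Sigma_0$ and assert they are mean-convex with mean curvature vector pointing into $\mathcal{T}_r$, ``the ambient curvature entering only at subleading order.'' That claim is true only when $\Sigma_0$ has codimension at least $2$ in $M$, because then the spherical fiber of the tube contributes a $\sim (k-1)/r$ term that dominates. In the setting of Lemma~\ref{lem:replacement}, however, $M$ is a $3$-manifold and $S$ is a $2$-sphere, so $\Sigma_0$ is a hypersurface. The ``tube'' is then just the pair of parallel surfaces at signed distance $\pm r$ from $\Sigma_0$, plus a half-tube near $\partial\Sigma_0$. On the parallel-surface part, the Riccati computation gives, to leading order in $r$,
\[
H(r)\;\approx\;-\bigl(\|A_{\Sigma_0}\|^2+\ric(\nu,\nu)\bigr)\,r,
\]
so the sign of the mean curvature vector is governed precisely by the Jacobi coefficient $\|A\|^2+\ric(\nu,\nu)$, which can have either sign; the ambient curvature enters at leading, not subleading, order. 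Your touching point $q^\star$ can lie over the interior of $\Sigma_0$, on exactly this part of the boundary, and then the maximum principle is not available. Shrinking $\rho$ does not help, since it does not change the sign of $\|A\|^2+\ric(\nu,\nu)$.

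The paper circumvents this by choosing $r$ small enough that $\Delta=S\cap B_r$ is a stable disk with boundary, and replacing your constant (distance) function by the first Dirichlet eigenfunction $u>0$ of the Jacobi operator on $\Delta$, with positive Dirichlet eigenvalue $\mu$. The normal graphs $\Delta_t=\{\exp_q(t\,u(q)\nu(q))\}$ then have mean curvature $\approx -\mu\, t\, u$, whose sign is under control: for all small $|t|$ the mean curvature vector of $\Delta_t$ points back toward $\Delta_0$, regardless of the sign of $\|A\|^2+\ric(\nu,\nu)$. Moreover, because $u$ vanishes on $\partial\Delta$, the leaves $\Delta_t$ all share the boundary $\partial\Delta$, giving a lens-shaped barrier region that traps $B_\rho(p)$. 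With these barriers one applies White's maximum principle \cite{Whi7} exactly as you do, and then concludes by Constancy as in your final step (your preliminary paragraph invoking Solomon--White to get $\spt V'\supset\Sigma_0$ is superfluous, since Constancy already forces the multiplicity to be $k$ throughout the connected $\Sigma_0$). So the fix is to use the stability eigenfunction rather than the distance function to generate the sweep of barriers.
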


\begin{proof}
Let $r>0$ be chosen such that $\Delta=S\cap B_r$ is connected, stable and with 
non-empty smooth boundary. Let $u$ be a non negative eigenfunction for the 
first eigenvalue of 
the Jacobi operator on $\Delta$ with $u=0$ on $\partial \Delta$. Let 
$\Delta_t=\{\exp_p(tu(p)\nu(p)), p\in \Delta\}$ where $\nu$ is the unit normal 
to $\Delta$. There is $\eps>0$ such that, for $|t|\le \eps$, $\Delta_t$ is a 
smooth surface with mean curvature pointing to $\Delta=\Delta_0$. Let $\rho$ 
such that $B_\rho(p)\subset \cup_{|t|<\eps}\Delta_t$.

Let $V'$ be as in the Lemma, we notice that $\spt V'\subset S\cup B_\rho(p)$. 
Using the surfaces $\Delta_t$ as barriers and the 
maximum principle~\cite{Whi7}, we have that $\spt V'\in S$. Then the Constancy 
theorem and $V'\llcorner B_\rho(p)^c=V\llcorner B_\rho(p)^c$ implies $V'=V$.
\end{proof}

We then have
\begin{proof}[Proof of Proposition~\ref{prop:currentlimit}]
Let $p\in S_1$ and $\rho$ given by Lemma~\ref{lem:replacement}. Let $A\subset 
B_\rho(p)$ be an annulus centered at $p$ such that $\Sigma_n$ is 
$\eps_n$-almost minimizing in $A$.

Let 
\[
\Iso_n(A)=\{\psi\in \Iso(A)\mid \boA(\psi_t(\Sigma_n))\le 
\boA(\Sigma_n)+\eps_n/8\}
\]
and $\psi^k\in\Iso_n(A)$ such that
\[
\boA(\psi_1^k(\Sigma_n))\to \inf_{\psi\in \Iso_n(A)}\boA(\psi_1(\Sigma_n))
\]

Let $q\in 
S_1 \cap A$ and $\eps>0$ such that $B_\eps(q)\subset A$. We first fix $n$ and 
$k$. Let 
$\phi^j\in\Iso(B_\eps(q))$ be a sequence such that 
\[
\boA(\phi_1^j(\Sigma_n^k))\to \inf_{\phi\in 
\Iso(B_\eps(q))}\boA(\phi_1(\Sigma_n^k))
\]

Let $W_n^k$ be the varifold limit of $\phi_1^j(\Sigma_n^k)$ and $R_n^k\in 
\boZ_2(M,\Z_2)$ be the limit of $\phi_1^j(\Sigma_n^k)$ for the flat 
convergence. By Theorem~\ref{th:msy}, 
$R_n^k\llcorner B_\eps(q)= [W_n^k\llcorner B_\eps(q)]\in 
\mathbf{I}_2(B_\eps(q),\Z_2)$. Besides we have 
$R_n^k\llcorner \barre B_\eps(q)^c = \Sigma_n^k\llcorner \barre B_\eps(q)^c$. 

Letting $k\to \infty$ we consider $W_n$ a varifold limit of $W_n^k$ and $R_n$ a 
limit of $R_n^k$ for the flat convergence. Since $\spt(W_n^k)\cap B_\eps(q)$ is 
a stable minimal surface, we have curvature estimates and the convergence of 
$\spt(W_n^k)\cap B_\eps(q)$ to $\spt(W_n)$ is locally smoothly graphical. So 
taking multiplicities of these graphical leaves into account,  varifold and 
flat convergences give $R_n\llcorner B_\eps(q)= 
[W_n\llcorner B_\eps(q)]$.

There is a subsequence $(\phi_1^{j(k)}(\Sigma_n^k))$ which 
converges to $W_n$ as varifold and $R_n$ as current. By \cite[Lemma 
7.6]{CoDeL} 
there is $\Phi^{j(k)}\in 
\Iso(B_\eps(q))$ such that  $\Phi_1^{j(k)}=\phi_1^{j(k)}$ and 
$\boA(\Phi_t^{j(k)}(\Sigma_n^k))\le 
\boA(\Sigma_n^k)+\eps_n/8$. This implies  that $\phi_1^{j(k)}(\Sigma_n^k)$ can 
be constructed from $\Sigma_n$ by an isotopy in  $\Iso_n(A)$.  Then 
$(\phi_1^{j(k)}(\Sigma_n^k))$ is a minimizing sequence in  $\Iso_n(A)$. So 
$\spt(W_n)\cap A$ is a stable minimal surface \cite[Lemma 7.4]{CoDeL}. Moreover 
a varifold limit $W$ of 
$W_n$ is a replacement for $V$ in $A$ \cite[Proposition 7.5]{CoDeL} and then 
$V=W$ because of Lemma~\ref{lem:replacement} and $A\subset B_\rho(p)$. Let $R$ 
be a current limit of $R_n$. 

As above since $\spt(W_n)\cap A$ is a stable minimal surface, the varifold and 
flat convergence give $R\llcorner A=[W\llcorner A]=[V\llcorner A]$. Actually 
$\spt R\subset \spt 
W=\spt V$ and $R\llcorner \barre A^c=T\llcorner \barre A^c$. So by the 
Constancy Theorem, 
$R=T$. So $T\llcorner A=[V\llcorner A]$, this implies that the multiplicity of 
$T$ on $S_1$ is equal to the one of $V$ mod $2$. Then $T=[V]$ by considering 
$p$ on other connected components.
\end{proof}

\subsection{Almost stable minimal hypersurfaces}

If $\Sigma$ is a stable minimal hypersurface in a Riemannian manifold $M$, then 
$\Sigma$ is a local minimum for the area functional (see \cite{Whi5}). In this 
section we introduce almost stable minimal hypersurfaces as hypersurfaces which 
have this local minimum property: one can give a sense to stability of a 
minimal hypersurface $\Sigma$ whose first Jacobi eigenvalue 
vanishes, \textit{i.e.} degenerate stable minimal hypersurface.

Actually we focus only on $2$-sided minimal hypersurfaces, the $1$-sided case 
can be considered similarly. So let $\Sigma$ be a $2$-sided minimal 
hypersurface and parameterize the $\eps$-neighborhood of $\Sigma$ by 
$\Phi:\Sigma\times(-\eps,\eps)\to M;(p,t)=\exp_p(t\nu(p))$. This defines a 
vectorfield $\partial_t$ on the neighborhood. If $u$ is a smooth 
function on $\Sigma$, we define $\Phi_u : \Sigma\to M$ by 
$\Phi_u(p)=\Phi(p,u(p))$ and $\Sigma_u=\Phi_u(\Sigma)$. If $u$ is small, 
$\Sigma_u$ is immersed and we define $\nu_u(p)$ the 
unit normal to $\Sigma_u$ at $\Phi_u(p)$ such that $(\nu_u,\pr_t)\ge 0$ and 
$H_u(p)\in \R$ such that the mean curvature vector of $\Sigma_u$ is 
$H_u(p)\nu_u(p)$  at $\Phi_u(p)$. Let $W_u$ be the jacobian of the map 
$\Phi_u$. Then the area of $\Sigma_u$ can be computed as 
$\boA(\Sigma_u)=A(u)=\int_\Sigma W_u$. For $u$ and $v$ two functions we then 
have 
\begin{equation}\label{eq:differentiate}
DA(u)(v)=\int_\Sigma H_u(\nu_u,\pr_t)vW_u=\int_\Sigma 
h_uv
\end{equation}
where $h_u=H_u(\nu_u,\pr_t)W_u$. The map $u\mapsto h_u$ is a smooth operator of 
order $2$. 

\begin{lem}\label{lem:foliat}
Let be a degenerate stable minimal $2$-sided hypersurface $\Sigma$ with 
 first Jacobi eigen-function $u_0$. There is $\eps>0$ and a 
 smooth 
map 
$v:(-\eps,\eps)\times\Sigma\to \R; (t,p)\mapsto v_t(p)$ with the following 
properties.
\begin{itemize}
\item $v_0=0$, $\pr_t {v_t}_{|t=0}=0$ and $\int_\Sigma u_0v_t=0$.
\item for each $t\in(-\eps,\eps)$, $h_{tu_0+v_t}$ is a multiple of $u_0$.
\end{itemize}
\end{lem}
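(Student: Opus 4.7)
The plan is to apply the implicit function theorem in Hölder spaces after splitting off the direction $u_0$. Fix $\alpha\in(0,1)$ and set
\[
E=\Bigl\{v\in C^{2,\alpha}(\Sigma):\int_\Sigma u_0v=0\Bigr\},\qquad
F=\Bigl\{w\in C^{0,\alpha}(\Sigma):\int_\Sigma u_0w=0\Bigr\},
\]
and let $\pi:C^{0,\alpha}(\Sigma)\to F$ be the $L^2$-orthogonal projection removing the $u_0$-component. Because $\Sigma$ is minimal, $h_0=0$, and the smooth second order operator $u\mapsto h_u$ introduced just before the statement has linearization at $u=0$ equal (up to sign) to the Jacobi operator $L$ of $\Sigma$: this follows from \eqref{eq:differentiate} by differentiating in $u$ and recalling that the second variation of area at a minimal $\Sigma$ is controlled by $L$.

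The key object is the map
\[
G:(-\eps,\eps)\times E\longrightarrow F,\qquad G(t,v)=\pi\bigl(h_{tu_0+v}\bigr).
\]
Then $G(0,0)=\pi(0)=0$, and its derivative in the second variable at $(0,0)$ is $D_vG(0,0)(w)=\pi(Lw)=Lw$: indeed $L$ is $L^2$-self-adjoint and $Lu_0=\lambda_0u_0=0$, so its image is already contained in $u_0^{\perp}=F$. Assuming $\Sigma$ connected, the first eigenfunction $u_0$ spans $\ker L$, so $L|_E$ is injective; standard Fredholm theory for self-adjoint elliptic operators on closed manifolds upgrades this to an isomorphism $L|_E:E\to F$ with bounded inverse.

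The implicit function theorem then yields $\eps>0$ and a smooth family $t\mapsto v_t\in E$, with $v_0=0$ and $G(t,v_t)=0$ for $|t|<\eps$. The condition $v_t\in E$ is exactly $\int_\Sigma u_0v_t=0$, and $G(t,v_t)=0$ says $h_{tu_0+v_t}\in\ker\pi=\mathrm{span}(u_0)$. To see that $\partial_tv_t|_{t=0}=0$, differentiate the identity $G(t,v_t)=0$ at $t=0$: using $Lu_0=0$ it reduces to $L\bigl(\partial_tv_t|_{t=0}\bigr)=0$, and since $\partial_tv_t|_{t=0}\in E$ while $\ker L\cap E=\{0\}$, this derivative must vanish.

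The only genuinely nontrivial point is the Fredholm step: one must know that the $0$-eigenspace of $L$ is exactly $\mathrm{span}(u_0)$ so that $L|_E$ is bijective onto $F$. This relies on the fact that a first eigenfunction of a self-adjoint elliptic operator on a closed connected manifold is unique up to scale; the rest is a routine Lyapunov–Schmidt reduction.
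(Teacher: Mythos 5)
Your proof is correct and follows essentially the same Lyapunov--Schmidt/implicit function theorem argument as the paper, using the same splitting $C^{2,\alpha}(\Sigma) = \mathrm{span}(u_0)\oplus E$ and the same map $G(t,v)=\pi(h_{tu_0+v})$. The only additions are helpful bookkeeping the paper leaves implicit: the explicit check that $\partial_t v_t|_{t=0}=0$ by differentiating $G(t,v_t)=0$ and using $Lu_0=0$, and the justification that $L|_E$ is an isomorphism because $\ker L=\mathrm{span}(u_0)$ for the first eigenvalue.
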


\begin{proof}
Let $V$ be the $L^2$ orthogonal of $u_0$ and $\pi_V$ the orthogonal projection 
on $V$. Let $V^{k,\alpha}=V\cap C^{k,\alpha}(\Sigma)$. Let us define the map: 
$G : \R\times V^{2,\alpha}\to V^{0,\alpha}; (t,v)\mapsto 
\pi_V(h_{tu_0+v})$. We notice that $G(0,0)=0$ and $D_vG(0,0)(w)=\pi_V(Lw)$ 
where 
$L$ is the Jacobi operator. Since $L$ is invertible from $V^{2,\alpha}$ to 
$V^{0,\alpha}$, the implicit function theorem gives the map $v$.
\end{proof}

Once the family $\{v_t\}$ is constructed we then define a foliation of a 
neighborhood of $\Sigma$ by $S_t=\Sigma_{tu_0+v_t}$, this foliation is called 
the canonical foliation given by $\Sigma$. Let $a(t)=\boA(S_t)$.
We notice that by construction $a'(t)=0$ if and only if $S_t$ is a minimal 
surface. Moreover, for small $u$, $\Sigma_u$ is a minimal surface iff 
$\Sigma_u=S_t$ for some small $t$. It is possible that 
$a(t)=\boA(\Sigma)$ for $t\in (-\eps,\eps)$, $t\in[0,\eps)$ or $t\in(-\eps, 
0]$. 
In the first case, a whole neighborhood of $\Sigma$ is foliated by minimal 
hypersurfaces, we say that 
$\Sigma$ is \emph{minimally foliating on both side}. In both remaining cases, 
only one 
side of 
the neighborhood is foliated by minimal hypersurfaces, we say that $\Sigma$ is 
\emph{minimally foliating only on one side}.

If $\{S_t\}_t$ is such a smooth family of minimal surfaces with $\Sigma=S_0$, 
we notice that the derivative at $t$ of the family is given by a Jacobi field 
on $S_t$. So up to a change of parametrization $\{S_s\}_s$, we can assume that 
this Jacobi field as unit $L^2$-norm.

One can try to extend such a family. Let $\{S_s\}_{s\in (s^-,s^+)}$ be such 
a family of minimal hypersurfaces with $S_0=\Sigma$ (possibly $s^-=0$ and 
$s\in[0,s^+)$). First we notice that the hypersurfaces $S_s$ are disjoint for 
$s$ close to $0$. So if $S_s\cap S_{s'}\neq \emptyset$ for some $0\le s'<s$, 
there is a larger $\sigma$ such that all the hypersurfaces in $\{S_{s}\}_{0\le 
s<\sigma}$  are disjoint and $S_\sigma $ 
intersect $\Sigma$. Then the maximum principle implies that $\Sigma=S_\sigma$ 
and then $S_s=S_{s+\sigma}$ and the family is periodic and defined on $\R$.

Assume now that the family is made of disjoint minimal hypersurfaces. Each 
$S_s$ is 
an index $0$ minimal hypersurface of area $\boA(\Sigma)$. So by Sharp 
compactness result \cite{Shar} , as $s\to s^+$, a subsequence converges in 
the varifold 
sense to a minimal hypersurface $\Sigma^+$ : if $\Sigma^+$ is two-sided 
the convergence is smooth, if $\Sigma^+$ is one sided the convergence is with 
mulitplicity $2$ and smooth in the double cover of the tubular neighborhood of 
$\Sigma^+$. Actually this implies that as $s\to s^+$ the whole family $\{S_s\}$ 
converges to $\Sigma^+$. The same can be done as $s\to s^-$ to produce 
$\Sigma^-$. If $\Sigma^+$ is one sided, then the whole tubular neighborhood 
of $\Sigma^+$ is foliated by $\{S_s\}_{s\in (s^-,s^+]}$ where 
$S_{s^+}=\Sigma^+$. If 
$\Sigma^+$ is two sided, the family extends for $s=s^+$ and $S_{s^+}=\Sigma^+$ 
is either minimally foliating only on one side and the family 
$\{S_t\}_{t\in(s^-,s^+]}$ 
can't be extended across $s^+$ or $\Sigma^+$ is minimally foliating on both 
sides and  the family extends to $t\in(s^-,s')$ with $s^+<s'$. As a consequence 
we are in one of the following cases
\begin{enumerate}
\item the family extends to $\{S_s\}_{s\in [s^-,s^+]}$ and 
$S_{s^\pm}$ are minimally foliating only on one side or, in the case 
$s^-=0=s^+$, $\Sigma$ is 
not minimally foliating on any side
\item the family is periodic and gives a global foliation of $M$
\item the family extends to $\{S_s\}_{s\in(s^-,s^+]}$ such that 
$S_s\to \Sigma^-$ a $1$-sided minimal hypersurface as $s\to s^-$ and $S_{s^+}$ 
is minimally foliating only on one side.
\item the family extends to $\{S_s\}_{s\in(s^-,s^+)}$ such that $S_s\to 
\Sigma^\pm$ two $1$-sided minimal hypersurfaces as $s\to s^\pm$. In that case 
$\{S_s\}_{s\in[s^-,s^+]}$ with $S_{s_\pm}=\Sigma^\pm$ gives a global foliation 
of $M$.
\end{enumerate}
We notice that the family can't be defined on an unbounded interval without 
being periodic because of Sharp's compactness result and the fact that the 
family is parameterized at unit speed.

We then say that $\Sigma$ is \emph{almost stable} if, in the above cases $(1)$ 
and $(3)$, the area function $a^\pm$ associated to the canonical foliation 
given by $S_{s^\pm}$ also satisfies locally $a^\pm\ge 
\boA(S_{s^\pm})=\boA(\Sigma)$. We 
notice that, in case $(1)$ and $(3)$, $S_{s^\pm}$ is minimally foliating only 
on 
one side so there are value of $t$ close to $0$ such that 
$a^\pm(t)>\boA(\Sigma)$.

\subsection{Almost stable minimal hypersurfaces are local minima}

In the above classification we are interested in the situation of type 1 almost 
stable minimal hypersurface.

So we have a family of minimal hypersurface $\{S_s\}_{s\in[s^-,s^+]}$ that we 
can view as a subset $\boM$ of currents in $\boZ_n(M,\Z_2)$. We notice that 
$\boM$ is compact for the flat topology. In $\{S_s\}$, there are two particular 
element 
$S^-=S_{s^-}$ and $S^+=S_{s^-}$ that are minimally foliating only on one side; 
if 
$s^-=0=s^+$, 
$S^-=\Sigma=S^+$ and $\Sigma$ is not minimally foliating on any side. In the 
the non minimally foliated side, the leaves of the canonical foliation given by 
$S^\pm$ have area at least $\boA(\Sigma)$. However, it may contain minimal 
hypersurfaces of area $\boA(\Sigma)$, we add to 
$\boM$ all these minimal hypersurfaces that are sufficiently close to $S^\pm$ 
to define $\boM'$. $\boM'$ is still compact for the flat topology.

We have the following result

\begin{prop}\label{prop:locmin1}
Let $\Sigma$ be a type 1 almost stable minimal hypersurface and $\boM'$ be the 
set of 
minimal hypersurface of area $\boA(\Sigma)$ in the canonical foliation as 
defined above. Then there is an open set $U$ 
containing $\Sigma$ such that $\boA(\Sigma)\le \MM(T)$ for any 
$T\in\boZ_n(M,\Z_2)$ homologous to $\Sigma$ 
with support in $U$ and, if $\boA(\Sigma)= \MM(T)$, then $T\in \boM'$.
\end{prop}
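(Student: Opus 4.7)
The plan is to build a foliated tubular neighborhood of $\Sigma$ and run a calibration-style argument. This is clean here because the canonical foliations given by Lemma~\ref{lem:foliat} produce leaves whose scalar mean curvature has a definite pointwise sign.

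First, I extend $\boM=\{S_s\}_{s\in[s^-,s^+]}$ by the canonical foliations of $S^\pm$ on the non-minimally-foliated sides to obtain a foliation $\{L_t\}_{t\in[b_-,b_+]}$ of a tubular neighborhood $U$ of $\Sigma=L_0$. On each canonical-foliation leaf $L_t$, Lemma~\ref{lem:foliat} asserts that $h_u$ is a constant multiple of the positive first eigenfunction $u_0$, so the scalar mean curvature of $L_t$ has a fixed sign across the leaf. After shrinking $U$ so that $\boA(L_t)\geq\boA(\Sigma)$ holds throughout and the only minimal leaves in the extensions are those in $\boM'\setminus\boM$, the signed mean curvature of each non-minimal leaf points toward the minimally foliated interior. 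Let $N$ be the unit normal to the foliation in the direction of increasing $t$, and set $\omega=\iota_N\,\mathrm{dvol}_M$: this $n$-form has comass $1$ and restricts to the area form on each leaf. Because $\mathrm{div}\,N=-\vec H\cdot N$ pointwise, the sign of the mean curvature translates into $d\omega=(\mathrm{div}\,N)\,\mathrm{dvol}\geq 0$ on $U^+:=\{t>0\}$ and $d\omega\leq 0$ on $U^-:=\{t<0\}$, with equality exactly on the minimal leaves.

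Given $T\in\boZ_n(M,\Z_2)$ homologous to $\Sigma$ with support in $U$, I write $T+\Sigma=\partial E$ for a $\Z_2$-chain $E$ in $U$, split $E=E^+\sqcup E^-$ with $E^\pm\subset U^\pm$, and set $T^\pm:=T\llcorner U^\pm$ and $\Sigma^\pm:=\Sigma\cap\partial E^\pm$. A direct set-theoretic check---examining which side of $\Sigma$ locally lies in $E$ at each $p\in\Sigma\setminus T$---shows that $\Sigma=\Sigma^+\sqcup\Sigma^-\sqcup(\Sigma\cap T)$ up to $\mathcal{H}^n$-null sets, hence $\boA(\Sigma)=\boA(\Sigma^+)+\boA(\Sigma^-)+\mathcal{H}^n(\Sigma\cap T)$. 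Applying Stokes' theorem to $E^+$ with the form $\omega$ and using $d\omega\geq 0$ yields $\int_{T^+}\omega-\boA(\Sigma^+)=\int_{E^+}d\omega\geq 0$, hence $\MM(T^+)\geq\int_{T^+}\omega\geq\boA(\Sigma^+)$. The symmetric argument on $E^-$ with $-\omega$ gives $\MM(T^-)\geq\boA(\Sigma^-)$, and summing with the $\Sigma\cap T$ mass yields $\MM(T)\geq\boA(\Sigma)$.

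For the equality case, sharpness in the two calibration bounds forces $T^\pm$ to be tangent a.e.\ to the foliation and the regions $E^\pm$ to lie in the zero set of $d\omega$; thus $T^\pm$ is a $\Z_2$-sum of minimal leaves, which by our choice of $U$ all lie in $\boM'$. The equality $\MM(T^\pm)=\boA(\Sigma^\pm)\leq\boA(\Sigma)$ then forces $T^\pm$ to consist of at most one such leaf, and the mass identity $\MM(T)=\boA(\Sigma)$ combined with the decomposition of $\Sigma$ shows that exactly one of the three pieces $T^+$, $T^-$, $T\llcorner\Sigma$ is non-zero and realises a single leaf of $\boM'$ in its entirety, so $T\in\boM'$. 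I expect the main technical delicacy to be orchestrating the splittings and orientations in the $\Z_2$ flat-chain framework so that the two applications of Stokes combine with the correct signs and the set-theoretic decomposition of $\Sigma$ is rigorous.
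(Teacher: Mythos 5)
Your approach is a calibration argument built from a foliation of a tubular neighbourhood of $\Sigma$ by the canonical leaves, and it is genuinely different from the paper's route. The paper instead invokes White's local minimization theorem \cite{Whi5} to produce a mass minimizer $T_r$ in a shrinking neighbourhood $K_r$, writes $T_r$ as a graph of $u_r=w_{t_r}+f_r$ with $f_r\perp u_0$, and Taylor-expands $A$ to second order: the point is that $DA(w_{t_r})$ annihilates the $u_0$-orthogonal complement by Lemma~\ref{lem:foliat} and $D^2A$ is uniformly coercive there, so $\MM(T_r)\ge A(w_{t_r})+\tfrac{c}{2}\|f_r\|_2^2\ge\boA(\Sigma)$, with equality forcing $f_r=0$ and $T_r\in\boM'$.

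There is, however, a gap at the key step of your calibration: the claim that ``the signed mean curvature of each non-minimal leaf points toward the minimally foliated interior.'' Lemma~\ref{lem:foliat} gives that on each fixed leaf $L_t=\Sigma_{w_t}$ the function $h_{w_t}$ equals $c_tu_0$ with $u_0>0$, so the sign is constant \emph{across} a single leaf; moreover $a'(t)=\int_\Sigma h_{w_t}\,\partial_t w_t=c_t\int_\Sigma u_0^2$ (using $\int_\Sigma u_0\,\partial_t v_t=0$), so $\operatorname{sign} c_t=\operatorname{sign} a'(t)$. But the almost-stable hypothesis only provides the inequality $a(t)\ge\boA(\Sigma)$ near $0$; it does not give monotonicity of $a$, hence does not fix a sign for $a'$ (equivalently for $c_t$) on either side. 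When $a$ is $C^\infty$-flat at $0$ --- which is perfectly compatible with $\lambda_0=0$ --- $a'$ may change sign infinitely often while $a\ge\boA(\Sigma)$ holds, e.g.\ $a(t)=\boA(\Sigma)+e^{-1/t^2}(2+\sin(1/t))$. On sub-intervals where $a'<0$, $d\omega$ has the wrong sign on $U^+$ and the Stokes inequality $\int_{T^+}\omega-\boA(\Sigma^+)=\int_{E^+}d\omega\ge0$ fails; shrinking $U$ cannot remove sign changes accumulating at $t=0$. The paper's argument sidesteps this entirely because it never needs a pointwise sign of the mean curvature, only the integrated inequality $A(w_{t_r})\ge\boA(\Sigma)$, which is precisely the almost-stable hypothesis, together with the structural orthogonality of Lemma~\ref{lem:foliat}. (The $\Z_2$-orientation bookkeeping you flag is a real but manageable technicality --- codimension-one $\Z_2$-cycles homologous to $\Sigma$ in the tubular neighbourhood bound a Caccioppoli region whose reduced boundary carries a canonical orientation --- so that is not where the difficulty lies.)
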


\begin{proof}
The proof is similar to the one of \cite[Theorem 2]{Whi5}. So we consider the 
neighborhoods $K_r$, $0\le r\le \eps$ of $\Sigma$ given by \cite[Theorem 
1]{Whi5}. Let $T_r$ minimize the mass $\MM$ among all currents  homologous to 
$\Sigma$ in $K_r$. As $r\to 0 $, the currents $T_r$ must converge to $\Sigma$.

Since $T_r$ are uniformly almost minimizing, the convergence $T_r\to \Sigma$ 
and regularity theory implies that $T_r$ can be described as the normal graph 
of a function $u_r$ on $\Sigma$ with $u_r\to 0$ in $C^{1,\alpha}$. As $u_r$ is 
close to $0$, on can write $u_r= w_{t_r}+f_r$ where $w_t=tu_0+v_t$ is given by 
Lemma~\ref{lem:foliat} and $f_r\in V^{1,\alpha}$. So one can estimate the mass 
of $T_r$ by
\begin{align*}
\MM(T_r)&=A(u_r)\\
&=A(w_{t_r}+f_r)\\
&=A(w_{t_r})+\int_0^1DA(w_{t_r}+tf_r)(f_r)dt\\
&=A(w_{t_r})+DA(w_{t_r})(f_r)+\int_0^1\int_0^1tD^2A(w_{t_r}+stf_r)(f_r,f_r)dsdt\\
\end{align*}
By Lemma~\ref{lem:foliat} and \eqref{eq:differentiate}, $DA(w_{t_r})(f_r)=0$ 
and, for $r$ close to $0$, there is 
$c>0$ such that $D^2A(w_{t_r}+stf_r)(f_r,f_r)\ge c \|f_r\|_2^2$ since 
$D^2A(0)$ is given by the Jacobi operator. So 
$\MM(T_r)=A(w_{t_r})+\frac c2\|f_r\|_2^2\ge \boA(\Sigma)+\frac c2\|f_r\|_2^2$. 
Since $T_r$ is minimizing $\MM(T_r)\le \boA(\Sigma)$. So $f_r=0$ and 
$A(w_{t_r})=\boA(\Sigma)$ : $T_r=\Sigma_{w_{t_r}}$ which is minimal. So $T_r\in 
\boM'$ for $r$ close to $0$. 
\end{proof}

\begin{prop}\label{prop:locmin2}
Let $\Sigma$, $\boM$ and $\boM'$ be as above.
There is $\eps>0$ so that for every $T\in \boZ_n(M,\Z_2)$ with 
$\boF(T,\boM)<\eps$, we have $\MM(T)>\boA(\Sigma)$ unless $T\in \boM'$ where
$\MM(T)=\boA(\Sigma)$.
\end{prop}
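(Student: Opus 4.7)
The plan is to argue by contradiction, reducing the flat-topological claim to the local (in $M$) statement of Proposition~\ref{prop:locmin1} through a slicing-and-filling construction. Suppose there is a sequence $T_n\in\boZ_n(M,\Z_2)$ with $\boF(T_n,\boM)\to 0$, $\MM(T_n)\le\boA(\Sigma)$, and $T_n\notin\boM'$. Since $\boM=\{S_s\}_{s\in[s^-,s^+]}$ is a smooth family parametrized by a compact interval, it is compact in the flat topology, so there exist $S_n\in\boM$ with $\boF(T_n,S_n)\to 0$, and, along a subsequence, $S_n\to S\in\boM$ smoothly. Thus $T_n\to S$ in the flat topology; by lower semi-continuity of mass $\MM(T_n)\to\boA(S)=\boA(\Sigma)$, which combined with flat convergence upgrades to Radon-measure convergence $\|T_n\|\to\|S\|$ on $M$. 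In particular, for any tubular neighborhood $W_\delta=\{d_S<\delta\}$ of $S$, $\|T_n\|(M\setminus W_\delta)\to 0$ and $\int_0^\delta \MM(\langle T_n,d_S,r\rangle)\,dr\le\|T_n\|(\{0<d_S<\delta\})\to 0$.

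Fix $U_S$ from Proposition~\ref{prop:locmin1} applied at $S$ and $\delta>0$ with $\overline{W_\delta}\subset U_S$; pick $r_n\in(\delta/2,\delta)$ with $\MM(\langle T_n,d_S,r_n\rangle)\to 0$. By an isoperimetric inequality in $M$, choose an integer $n$-chain $P_n$ of minimum mass within a collar of $\{d_S=r_n\}$ inside $W_\delta$ bounding the slice; since $T_n|_{W_{r_n}^c}$ is itself a filling of the same slice, $\MM(P_n)\le\MM(T_n|_{W_{r_n}^c})\to 0$. The cycle $\tilde T_n=T_n|_{W_{r_n}}+P_n$ is supported in $U_S$ and homologous to $S$ (since on $M=\S^3$ any cycle is null-homologous in $\Z_2$, and in general flat-closeness forces the same class), with $\MM(\tilde T_n)\le\MM(T_n|_{W_{r_n}})+\MM(P_n)\le\MM(T_n)\le\boA(\Sigma)$. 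Proposition~\ref{prop:locmin1} then gives $\MM(\tilde T_n)\ge\boA(\Sigma)$, with equality iff $\tilde T_n\in\boM'$. Hence equality holds throughout: $\MM(T_n)=\MM(\tilde T_n)=\boA(\Sigma)$ for large $n$, $\tilde T_n\in\boM'$, and $\MM(P_n)=\MM(T_n|_{W_{r_n}^c})$. In particular the strict inequality case $\MM(T_n)<\boA(\Sigma)$ is ruled out immediately.

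It remains to upgrade $\tilde T_n\in\boM'$ to $T_n\in\boM'$. Since $\MM(T_n-\tilde T_n)\le\MM(P_n)+\MM(T_n|_{W_{r_n}^c})\to 0$, $T_n$ has the same flat limit $\tilde T\in\boM'$ as $\tilde T_n$, and $\|T_n\|\to\|\tilde T\|$ as Radon measures. Allard's regularity theorem then represents $T_n$ as a smooth graph $\Phi_{u_n}(\tilde T)$ with $u_n\to 0$ in $C^{2,\alpha}$ in a tubular neighborhood of $\tilde T$, while the tail outside has vanishing mass. Applying Lemma~\ref{lem:foliat} at $\tilde T$ to decompose $u_n=w_{t_n}+f_n$ with $f_n$ in the $L^2$-orthogonal complement of the first eigenfunction $u_0^{\tilde T}$, the computation from the proof of Proposition~\ref{prop:locmin1} yields $A(u_n)\ge A(w_{t_n})+c\|f_n\|_{L^2}^2$, and $A(w_{t_n})\ge\boA(\Sigma)$ by the almost-stability. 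Combined with $\MM(T_n)=\boA(\Sigma)$ and the vanishing tail, this forces $f_n=0$ and $A(w_{t_n})=\boA(\Sigma)$. Hence $T_n=\Phi_{w_{t_n}}(\tilde T)$ is a minimal-hypersurface leaf of the canonical foliation of $\tilde T$ with area $\boA(\Sigma)$, placing $T_n\in\boM'$, contradicting the assumption.

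The main obstacle lies in the regularity step at the start of the third paragraph: upgrading the varifold convergence $|T_n|\to|\tilde T|$ to a graph representation in $C^{2,\alpha}$ sufficient for the Taylor expansion of $A$ via Lemma~\ref{lem:foliat}. Since $T_n$ is not assumed minimal, Allard's theorem gives only $C^{1,\alpha}$-graph regularity; bootstrapping to $C^{2,\alpha}$ requires exploiting an approximate-minimality of $T_n$, extractable from the cut-and-fill construction itself (interpreted as an implicit local mass-minimization property). A secondary technical point is the control $\MM(P_n)\le\MM(T_n|_{W_{r_n}^c})$ for the isoperimetric filling, which relies on choosing $P_n$ as a constrained minimizer within a small collar of the slice hypersurface in $W_\delta$.
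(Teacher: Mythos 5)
Your overall strategy --- argue by contradiction, slice-and-fill to produce a replacement cycle $\tilde T_n$ supported in the neighborhood $U_S$ from Proposition~\ref{prop:locmin1}, and invoke that proposition --- is the same as the paper's, which follows \cite[Proposition~6.2]{MaNe5} and defers the cut-and-fill construction to that reference. But both ``technical points'' you flag at the end are genuine gaps, and the one you call secondary is actually the crux. The filling estimate $\MM(P_n)\le\MM(T_n\llcorner W_{r_n}^c)$ does not follow from choosing $P_n$ to minimize mass among fillings constrained to a collar of $\{d_S=r_n\}$: the comparison chain $T_n\llcorner W_{r_n}^c$ is not supported in that collar, so your minimization gives no comparison with it. The isoperimetric inequality only yields $\MM(P_n)\to 0$, hence $\MM(\tilde T_n)\le\boA(\Sigma)+o(1)$, which is not enough to run Proposition~\ref{prop:locmin1} in its equality case at each fixed $n$. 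Producing a cycle $R_n$ supported in $U_S$, agreeing with $T_n$ on a smaller neighborhood $V$ of $S$, and with $\MM(R_n)\le\MM(T_n)$ \emph{exactly}, is precisely the nontrivial content of the Marques--Neves lemma the paper cites; your proposal asserts it but does not supply it.

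The Allard step in your last paragraph is a wrong turn rather than a technical gap to be filled. An arbitrary $\Z_2$-cycle $T_n$ with only a mass bound has no generalized mean curvature control, so there is no Allard regularity and no reason $T_n$ should be a $C^{1,\alpha}$ graph over $\tilde T$, let alone $C^{2,\alpha}$. No regularity is needed. Once $\tilde T_n\in\boM'$ with $\MM(\tilde T_n)=\boA(\Sigma)$ is established, the paper finishes by pure mass additivity: by construction $\tilde T_n-T_n$ is supported in $M\setminus V$ for a fixed smaller neighborhood $V$ of $S$, while $\tilde T_n\to S$ in the flat topology and, being a smooth minimal hypersurface in the canonical foliation, satisfies $\spt\tilde T_n\subset V$ for $n$ large. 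Thus $T_n=\tilde T_n+T_n\llcorner(M\setminus V)$ with disjoint supports, and
\[
\MM(T_n)=\MM(\tilde T_n)+\MM\bigl(T_n\llcorner(M\setminus V)\bigr)=\boA(\Sigma)+\MM\bigl(T_n\llcorner(M\setminus V)\bigr)\le\boA(\Sigma)
\]
forces $T_n\llcorner(M\setminus V)=0$, so $T_n=\tilde T_n\in\boM'$, the desired contradiction.
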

\begin{proof}
The proof follows the lines of the proof of \cite[Proposition 6.2]{MaNe5}.

First let $(T_i)\in\boZ_n(M,\Z_2)$ be a sequence converging to some $S$ in 
$\boM$ in the flat topology with $\MM(T_i)\le \boA(\Sigma)$. From 
Proposition~\ref{prop:locmin1}, we obtain a neighborhood $U$ of $S$ such 
that elements in $\boM'$ are the only minimizers of $\MM$ among cycles 
contained 
in $U$ homologous to $S$. Let us consider a smaller neighborhood $V$ with 
$\barre 
V\subset U$. In \cite{MaNe5}, Marques and Neves then construct a sequence of 
cycles $R_i$ satisfying the following properties
\begin{itemize}
\item $\MM(R_i)\le \MM(T_i)\le \boA(\Sigma)$
\item $R_i\to S$ in the flat topology
\item the support of $R_i-T_i$ is contained in $M\setminus V$
\item the support of $R_i$ is in $U$ for large $i$
\end{itemize}
Since $R_i\to S$, $R_i$ is homologous to  $S$ for large $i$. So 
because of Proposition~\ref{prop:locmin1}, $R_i=S_i$ for some $S_i\in \boM'$. 
Moreover, since $R_i\to \Sigma$, $S_i\subset V$ for large $i$ then 
$\MM(T_i)=\MM(S_i+T_i\llcorner(M\setminus 
V))=\boA(S_i)+\MM(T_i\llcorner(M\setminus V))$ so $T_i\llcorner(M\setminus 
V)=0$ and $T_i=R_i=S_i$ \textit{i.e.} $T_i\in \boM'$.
\end{proof}

So the consequence of this is the following result.

\begin{prop}\label{prop:neigh}
Let $\Sigma$ and $\boM$ be as above.
For any $\eps>0$, there is $\delta>0$ and a neighborhood $\boN\subset 
\Z_n(M,\Z_2)$ of $\boM$, such that $\boN\subset 
\boN_\eps(\boM)=\{T\in\boZ_n(M,\Z_2)\mid \boF(T,\boM)<\eps\}$ and, for 
any $T\in \partial\boN$, we have $\MM(T)\ge 
\boA(\Sigma)+\delta$.
\end{prop}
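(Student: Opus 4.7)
The plan is to take $\boN$ to be a sufficiently small flat-neighborhood of $\boM$ and to verify, using Proposition~\ref{prop:locmin2}, that the mass is bounded away from $\boA(\Sigma)$ on its topological boundary.

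The crucial preliminary step will be to show that
\[
d := \inf\{\boF(T,\boM) : T\in \boM'\setminus \boM\}>0
\]
(with the convention $d=+\infty$ if $\boM'=\boM$). To prove this I would argue by contradiction: suppose there is $(T_n)\subset\boM'\setminus\boM$ with $\boF(T_n,\boM)\to 0$. Each $T_n$ is an embedded minimal hypersurface of area $\boA(\Sigma)$ with uniformly bounded index, so Sharp's compactness theorem yields a subsequence converging smoothly (with multiplicity one, since everything is two-sided) to some leaf $S_s\in\boM$. If $s\in(s^-,s^+)$, then $S_s$ is minimally foliating on both sides and the canonical foliation at $S_s$ locally coincides with $\{S_{s'}\}$, so any minimal hypersurface $C^2$-close to $S_s$ is a leaf in $\boM$, contradicting $T_n\notin\boM$. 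If $s=s^\pm$, the canonical foliation at $S^\pm$ has minimal leaves only on the foliated side (belonging to $\boM$) and only $S^\pm$ itself on the non-foliated side, yielding the same contradiction.

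Once $d>0$ is in hand, let $\eps_0$ be the constant from Proposition~\ref{prop:locmin2} and pick $\eps'\in(0,\min(\eps,\eps_0,d))$. I would define $\boN := \{T\in\boZ_n(M,\Z_2) : \boF(T,\boM)<\eps'\}$, which is open in the flat topology, contains $\boM$, and lies inside $\boN_\eps(\boM)$; since $\boF(\cdot,\boM)$ is $1$-Lipschitz, its topological boundary equals $\partial\boN=\{T : \boF(T,\boM)=\eps'\}$. To finish, I would show $m := \inf\{\MM(T) : T\in\partial\boN\}>\boA(\Sigma)$ and then take $\delta := (m-\boA(\Sigma))/2$. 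A minimizing sequence on $\partial\boN$ has bounded mass, so after extraction it converges in the flat topology to some $T$ with $\boF(T,\boM)=\eps'$ (by continuity) and $\MM(T)\le m$ (by lower semi-continuity). If $m=\boA(\Sigma)$, then Proposition~\ref{prop:locmin2} (applicable because $\eps'<\eps_0$) forces $T\in\boM'$; but $\eps'<d$ rules out $T\in\boM'\setminus\boM$, hence $T\in\boM$ and $\boF(T,\boM)=0\neq\eps'$, a contradiction. The main obstacle is the separation estimate $d>0$, which requires upgrading flat convergence of minimal hypersurfaces to smooth convergence via Sharp's theorem and then leveraging the precise local structure of the canonical foliation, especially at the endpoints $S^\pm$ where only one side is minimally foliated.
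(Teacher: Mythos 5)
Your plan hinges on the separation estimate $d:=\inf\{\boF(T,\boM):T\in\boM'\setminus\boM\}>0$, and the argument you give for it contains a false step: you claim that at an endpoint $S^\pm$, ``the canonical foliation at $S^\pm$ has minimal leaves only on the foliated side $\ldots$ and only $S^\pm$ itself on the non-foliated side.'' This does not follow from ``minimally foliating only on one side.'' That condition says only that the area function $a^\pm$ is not \emph{identically} $\boA(\Sigma)$ on the non-foliated side. Since $a^\pm-\boA(\Sigma)$ is a smooth non-negative function vanishing at $0$, it can perfectly well vanish on a sequence $t_n\to 0$ on the non-foliated side without being identically zero (the standard flat bump $e^{-1/t^2}\sin^2(1/t)$ is such a profile). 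Each such $t_n$ is a local minimum, hence $a^\pm{}'(t_n)=0$, hence the leaf $S^\pm_{w_{t_n}^\pm}$ is minimal with area $\boA(\Sigma)$ — these are exactly the elements the paper adjoins to form $\boM'$. They lie in $\boM'\setminus\boM$ and converge to $S^\pm\in\boM$, so $d=0$. Your contradiction at $s=s^\pm$ never materializes, and the whole construction of $\boN$ via a sublevel set $\{\boF(\cdot,\boM)<\eps'\}$ collapses, since $\partial\boN$ may then contain elements of $\boM'$ with mass exactly $\boA(\Sigma)$.

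The paper's proof is designed precisely to avoid this: rather than trying to separate $\boM'$ from $\boM$, it enlarges $\boM$ to $\widetilde\boM=\boM\cup\{S^\pm_{w_s^\pm},\ 0\le\pm s\le t^\pm\}$ where $t^\pm$ is chosen so that $A(w_{t^\pm}^\pm)>\boA(\Sigma)$ strictly. This absorbs all the problematic minimal leaves accumulating at $S^\pm$ into $\widetilde\boM$, and the endpoint leaf $S^\pm_{w_{t^\pm}^\pm}$ is not of area $\boA(\Sigma)$, so no sequence in $\boM'\setminus\widetilde\boM$ can converge to it (by continuity of $a^\pm$). Hence $\boM'\setminus\widetilde\boM$ \emph{is} at positive $\boF$-distance from $\widetilde\boM$, and one can choose $\boN=\boN_\eta(\widetilde\boM)$ with $\eta$ small enough that $\overline{\boN}\cap\boM'\subset\widetilde\boM$. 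Your compactness-plus-lower-semicontinuity argument on $\partial\boN$ then goes through with this corrected choice of $\boN$; that part of your reasoning is fine. The gap is specifically in believing minimal leaves cannot accumulate at $S^\pm$ on the non-foliated side, which is the very phenomenon that forces the introduction of $\boM'$ and the intermediate set $\widetilde\boM$.
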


\begin{proof}
First we assume that $\eps$ is such that Proposition~\ref{prop:locmin2} 
applies. Let $S_\pm$ denote both extremal hypersurfaces of the family (possibly 
$S^+=\Sigma=S^-$). Let $\{w_t^\pm\}=\{tu_0^\pm+v_t^\pm\}$ be the families of 
functions 
given by Lemma~\ref{lem:foliat} associated to $S^\pm$. Since $\Sigma$ is almost 
minimizing and $S^\pm$ are minimally foliating only on one side, there are 
$\delta>0$ and $t^+>0$ (resp. $t^-<0$) such that 
$S^\pm_{w_s^\pm}\in 
\boN_\eps(\boM)$  for $0\le s\le t^+$ (resp. $t^-\le s\le 0$) and 
$A(w_{t^\pm}^\pm)> \boA(\Sigma)$.

Let $\widetilde \boM=\boM\cup\{S_{w_s^\pm}^\pm,0\le \pm s\le t^\pm\}$. We can 
notice 
that any element in $\boM'\setminus \widetilde \boM$ is at a positive $\boF$ 
distance 
from $\widetilde \boM$. Let $\eta>0$ 
such that $\boN_\eta(\widetilde \boM)\subset \boN_\eps(\boM)$ and $\barre{ 
\boN_\eta(\widetilde \boM)}\cap \boM'=\widetilde \boM\cap \boM'$ . If $T_i$ is 
a 
sequence in 
$\partial \boN_\eta(\widetilde \boM)$ such that $\MM(T_i)\to \inf \{\MM(T),T\in 
\pr\boN_\eta(\widetilde \boM)\}$. By compactness we can assume that $T_i\to T$ 
and, by lower-semicontinuity of the mass, $\MM(T)= \inf \{\MM(T),T\in 
\pr\boN_\eta(\widetilde \boM)\}$. Moreover $\boF(T,\widetilde \boM)=\eta$ and 
$\boF(T,\boM)\le \eps$. So by Proposition~\ref{prop:locmin2}, $\MM(T)\ge 
\boA(\Sigma)$ 
with equality iff $T\in \boM'$ which would implies $T\in \widetilde \boM$ a 
contradiction. So $\MM(T)> \boA(\Sigma)$ and the result is proved.
\end{proof}

\subsection{The rigidity result}

In this section we finally prove Theorem~\ref{th:rigidityminmax}. So let us fix 
a Riemannian $3$-sphere $(\S^3,g)$ with sectional curvature bounded above by 
$1$. Its Simon-Smith width is realized by a collection of minimal spheres whose 
areas are at least $4\pi$ so the width is at least $4\pi$. If the width is 
$4\pi$, the width is then realized by a minimal $2$-sphere 
$\Sigma$ with multiplicity $1$. 

%The first step of the proof is to understand the stability property of 
%$\Sigma$. By Theorem~1.2 in \cite{MaNe6}\footnote{Marques and Neves explain in 
%the introduction of \cite{MaNe6} that their proof uses 
%only deformations by isotopies so Theorem~1.2 is also valid in Simon-Smith 
%setting}, $\Sigma$ has index at most $1$.

If $\Sigma$ has index at least $1$, the rigidity comes from 
Theorem~\ref{th:rigidity}. So 
we need to prove that index $0$ cannot occur. 

If $\Sigma$ has vanishing first Jacobi eigenvalue then $\Sigma$ may belong to a 
family of minimal spheres $\{S_s\}$. Since we are in $\S^3$, this family is of 
type 1. Let $\boM=\{S_s,s\in [s_-,s_+]\}$ be the associated compact subset in 
$\boZ_2(M,\Z_2)$ ($\boM=\{\Sigma\}$ if $\Sigma$ is stable). 

If $\Sigma$ is almost-stable (or $\Sigma$ stable), let $\eps$ be less that the 
$\boF$ distance from $\boM$ to the $0$ cycle and let $\boN$ be the neighborhood 
of $\boM$ given by Proposition~\ref{prop:neigh}.

Since $\Sigma$ realize the width, there are sequences $\{\sigma^n\}$ 
in $\Lambda$ and $(t_n)$ in $[-1,1]$ such that $\sigma_{t_n}^n\to 
\Sigma$ in the varifold sense and $(\sigma_{t_n}^n)$ is a min-max sequence 
which is almost minimizing in small annuli. Because of 
Proposition~\ref{prop:currentlimit}, we have $\sigma_{t_n}^n\to 
\Sigma\in \boZ_2(M,\Z_2)$ for the $\boF$ metric. So for $n$ large enough, the 
continuous path $t\mapsto \sigma_t^n$ in $\boZ_2(M,\Z_2)$ enters into $\boN$. 
As this path starts and ends at the $0$ cycle, it must cross $\partial\boN$ so 
\[
\max_{t\in[-1,1]}\boA(\sigma_t^n)\ge 4\pi +\delta
\]
where $\delta$ is given by 
Proposition~\ref{prop:neigh}. So $\{\sigma^n\}$ can not be a minimizing 
sequence.

Thus the width is realized by a minimal sphere $\Sigma$ in $M$ of area $4\pi$ 
with vanishing first Jacobi eigenvalue and which is not almost-stable. $\Sigma$ 
belongs to a local foliation by minimal surfaces (it may contain only one leaf) 
which contain one minimal sphere $\barre S$ of area $4\pi$ whose canonical 
foliation 
$\{S_t\}$ contains leaves such that $\boA(S_t)<\boA(\Sigma)$ for $t>0$ 
arbitrarily close to $0$. Notice that $\barre S$ has also vanishing first 
Jacobi eigenvalue. Let $a(t)=\boA(S_t)$ be the associated area function. 
We notice that, if $a'(t)=0$, $S_t$ is a minimal sphere and then $a(t)\ge 
4\pi$. This implies that
\begin{itemize}
\item either $a(t)\ge 4\pi$ for $t\in[0,\eps)$,
\item or $a(t)$ is decreasing on $[0,\eps)$.
\end{itemize}
Since $\barre S$ is not almost-stable, we are in the second situation. In order 
to exploit this situation we need to introduce a slightly different local 
foliation near $\barre S$.
\begin{lem}
Let $\Sigma$ be a minimal $2$-sided hypersurface with vanishing first jacobi 
eigenvalue. There is $\eps>0$ and a smooth map 
$\tilde v:(-\eps,\eps)\times \Sigma\to \R$ with the following properties.
\begin{itemize}
\item $\tilde v_0=0$, ${\pr_t \tilde v_t}_{|t=0}=0$ and $\int_\Sigma u_0\tilde 
v_t=0$.
\item for each $t\in(-\eps,\eps)$, $H_{tu_0+\tilde v_t}$ is a multiple of $u_0$.
\end{itemize}
\end{lem}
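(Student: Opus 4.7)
The plan is to imitate the proof of Lemma~\ref{lem:foliat}, substituting $H_u$ for $h_u$. Since $h_u = H_u(\nu_u,\pr_t)W_u$ and at $u=0$ we have $\nu_0=\pr_t$, $W_0=1$, the map $u\mapsto H_u$ is a smooth second order quasilinear operator whose linearization at $u=0$ agrees with that of $u\mapsto h_u$; it is the Jacobi operator $L$ of $\Sigma$ (the scalar version of the operator $\Lbf$ computed earlier in the paper applied to the normal section $u\,\nu$).

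Concretely, let $V\subset C^0(\Sigma)$ denote the $L^2$-orthogonal complement of $u_0$, set $V^{k,\alpha}=V\cap C^{k,\alpha}(\Sigma)$, and let $\pi_V$ be the $L^2$-orthogonal projection onto $V$. Define
\[
\tilde G:\R\times V^{2,\alpha}\longrightarrow V^{0,\alpha},\qquad
\tilde G(t,v)=\pi_V\bigl(H_{tu_0+v}\bigr).
\]
Because $\Sigma$ is minimal, $H_0=0$, hence $\tilde G(0,0)=0$. Differentiating in $v$ at the origin gives $D_v\tilde G(0,0)(w)=\pi_V(Lw)$. Since the first Jacobi eigenvalue is $0$ with one-dimensional eigenspace $\R u_0$, the self-adjoint operator $L$ restricts to an isomorphism $L:V^{2,\alpha}\to V^{0,\alpha}$ by standard elliptic Fredholm theory.

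The implicit function theorem then supplies $\eps>0$ and a smooth map $t\mapsto\tilde v_t\in V^{2,\alpha}$ on $(-\eps,\eps)$ with $\tilde v_0=0$ and $\tilde G(t,\tilde v_t)=0$. The last equation says that $H_{tu_0+\tilde v_t}$ is pointwise a scalar multiple of $u_0$, which is the second required property. The condition $\int_\Sigma u_0\tilde v_t=0$ is built into $\tilde v_t\in V^{2,\alpha}$. To verify $\pr_t\tilde v_t|_{t=0}=0$, differentiate $\tilde G(t,\tilde v_t)=0$ at $t=0$:
\[
0=\pr_t\tilde G(0,0)+D_v\tilde G(0,0)\bigl(\pr_t\tilde v_t|_{t=0}\bigr)
=\pi_V(Lu_0)+\pi_V\!\bigl(L\,\pr_t\tilde v_t|_{t=0}\bigr),
\]
and since $Lu_0=0$ and $L$ is invertible on $V^{2,\alpha}$, we conclude $\pr_t\tilde v_t|_{t=0}=0$.

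The only substantive point, and hence the main (mild) obstacle, is identifying the linearization of $u\mapsto H_u$ at $u=0$ with the Jacobi operator; this is the scalar analogue of the first variation formula for mean curvature derived in Section~\ref{sec:geometry}, and uses crucially the fact that $\Sigma$ is minimal (so that lower order correction terms coming from the factor $(\nu_u,\pr_t)W_u$ relating $H_u$ and $h_u$ vanish at $u=0$). Everything else is a direct transcription of the proof of Lemma~\ref{lem:foliat}.
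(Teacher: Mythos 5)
Your proof is correct and takes essentially the same route as the paper, which simply notes that the argument of Lemma~\ref{lem:foliat} applies verbatim with $h_u$ replaced by $H_u$. The extra observations you supply — that the linearizations of $u\mapsto h_u$ and $u\mapsto H_u$ at $u=0$ coincide (because $H_0=0$ makes the contribution of the factor $(\nu_u,\pr_t)W_u$ vanish at first order) and that $\pr_t\tilde v_t|_{t=0}=0$ follows from $Lu_0=0$ and the invertibility of $L$ on $V^{2,\alpha}$ — are exactly the points the paper leaves implicit.
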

The proof is the same as Lemma~\ref{lem:foliat} proof and gives the same 
consequence: if $\Sigma_w$ is minimal hypersurface with $w$ small 
$w=tu_0+\tilde v_t$ for some $t$.

So let $\{\widetilde S_t\}=\{\barre S_{tu_0+\tilde v_t}\}$ be  the family given 
by 
applying the  above lemma to $\barre S$. Let $\tilde 
a(t)=\boA(\widetilde{S}_t)$ be the associated area function. If $\tilde 
a'(t)=0$, 
$\widetilde S_t$ is a minimal sphere 
and then $\tilde a(t)\ge 4\pi$. So we also have the alternative
\begin{itemize}
\item either $\tilde a(t)\ge 4\pi$ for $t\in[0,\eps)$,
\item or $\tilde a(t)$ is decreasing on $[0,\eps)$.
\end{itemize}
Let us see that we are in the second case. If we are in the first one, there is 
$t_0>0$ such that $\tilde a'(t_0)\ge 0$, this implies that the mean curvature 
vector of $\widetilde S_{t_0}$ points to $\barre S$. For $t>0$ small, $S_t$ is 
between $\bar S$ and $\widetilde S_{t_0}$, let $t_1$ be the first $t$ such that 
$S_t$ 
intersect $\widetilde S_{t_0}$, then the mean curvature vector of $S_{t_1}$ 
must points to $\barre S$ which contradict $a'(t_1)<0$. So we are in the second 
case. 

If $\tilde w_t=tu_0+\tilde v_t$, we define on the neighborhood of $\barre S$ 
the vectorfield $X$ by $X(\Phi_{\tilde w_t}(p))=(\pr_t\tilde w_t)\pr_t$ such 
that $\widetilde S_t$ is the image of $\barre S$ by the flow given by $X$. We 
also consider $\tilde \nu_t$ the unit normal to $\widetilde S_t$ and 
$Y=(X,\tilde \nu_t)\tilde \nu_t$. This vectorfield is normal to 
$\widetilde S_t$ and $\widetilde{S}_t$ is the image of $\barre S$ by the flow 
given by $Y$. So looking at the $F$ functional on $\widetilde S_t$ we have:
\begin{align*}
\frac{d}{dt} F(\widetilde S_t)&=-\int_{\widetilde S_t} 2\widetilde H_t  (Y, 
\tilde \nu_t)d\sigma -\int_{\widetilde S_t}2\widetilde H_t^3 
(Y,\tilde \nu_t)d\sigma\\
&\qquad +\int_{\widetilde S_t} (\Delta^\perp Y,H_t\tilde \nu_t)+( 
Y,B(e_i,e_j))(B(e_i,e_j),\widetilde 
H_t\tilde \nu_t) d \sigma\\
&\qquad+\int_{\widetilde S_t} \widetilde 
H_tR(e_i,Y)e_i,\tilde \nu_t)d \sigma\\
&=\int_{\widetilde S_t} \widetilde H_t(X,\tilde \nu_t) 
(\ric(\tilde \nu_t)-2)d\sigma+\int_{\widetilde S_t} 
\widetilde H_t(X,\tilde \nu_t)\|\inter B\|^2d \sigma\\
&\qquad-\int_{\widetilde S_t}(\nabla (X,\tilde\nu_t),\nabla 
\widetilde H_t)d \sigma
\end{align*}
where $\Delta^\perp$ and $\nabla$ are operators on $\widetilde S_t$ and 
$\widetilde H_t$ is the mean curvature of $\widetilde S_t$.

Let $\tilde u_t$ be the positive function defined on $\widetilde S_t$ by 
$\tilde u_t(\Phi_{\tilde w_t}(p))=u_0(p)$. By construction there is $\tilde 
c_t\in \R$ 
such that $\widetilde H_t=\tilde c_t \tilde u_t$. Moreover since we are in the 
second case, $\tilde c_t>0$ and $\widetilde H_t>0$. This also implies 
$\widetilde H_t(X,\tilde \nu_t) \ge0$ and $(\nabla (X, \tilde \nu_t),\nabla 
\widetilde H_t)=c_t(\nabla (X,\tilde \nu_t),\nabla \tilde u_t)$. At $t=0$, 
$(\nabla (X,\tilde \nu_t),\nabla \tilde u_t)=\|\nabla u_0\|^2\ge 0$. So if 
$u_0$ is not constant 
$\int_{\widetilde S_t}(\nabla (X,\tilde \nu_t),\nabla \widetilde H_t)d 
\sigma\ge 0$ 
for small $t$ and, if $u_0$ is constant, $\widetilde{H}_t=\tilde c_t\tilde u_t$ 
is constant and $\int_{\widetilde S_t}(\nabla (Y,\tilde \nu_t),\nabla 
\widetilde H_t)d \sigma$ vanishes. In both cases
\[
\frac{d}{dt} F(\widetilde S_t)\le CF(\widetilde S_t)
\]
for some constant $C$.
This implies that $F(\widetilde S_t)=0$ for any $t$. So we are in the equality 
case: $\ric(\nu_t)=2$ since $c_t>0$. So $\ric(\nu_0)=2$ and the Jacobi operator 
on $\barre S$ is $\Delta+2$ which contradict
that $0$ is its first eigenvalue.

\appendix

\section{A Schauder estimate}\label{app:schauder}

In this appendix, we prove Proposition~\ref{prop:schauder}. To lighten 
notations, we denote $E_T=E_{[0,T]}$ and 
$P_T=\R^n\times[0,T]$. First, we give some 
complementary notations for H\"older norms. For a map $u:D\subset \R^n\times 
\R\to \R^k$, we define
\begin{gather*}
[u]_{2,\alpha,D}=[u]_{2,\alpha,D,x}+[u]_{2,\alpha,D,t}\\
[u]_{1,\alpha,D,x}=[\pr_x u]_{0,\alpha,D,x}\\
[u]_{1,\alpha,D,t}=[u]_{(1+\alpha)/2,D,t}+[\pr_x u]_{\alpha/2,D,t}\\
[u]_{1,\alpha,D}=[u]_{1,\alpha,D,x}+[u]_{1,\alpha,P,t}\\
\|u\|_{1,\alpha,D}=\|u\|_{0,D}+\|\pr_x u\|_{0,D}+[u]_{1,\alpha,D}
\end{gather*}

Associated to these norms, we have the $C^{l,\alpha}$ H\"older space. We also 
define the space $C_0^{l,\alpha}(P_T)$ as maps $u\in C^{l,\alpha}(P_T)$ 
satisfying $\pr_t^i u_{|t=0}=0$ for $0\le i\le l/2$.

We have some classical interpolation inequalities. 
\begin{lem}
Let $l,m\in\{0,1,2\}$, $\alpha,\beta\in[0,1]$ such that $l+\alpha> m+\beta$ and 
let $T>0$. 
Then for any $\eps>0$ there is a constant $C$ 
such that, for any $u:P_T\to \R^k$
\[
\|u\|_{m,\beta,P_T}\le C\|u\|_{L^2(P_T)}+\eps [u]_{l,\alpha,P_T}
\] 
\end{lem}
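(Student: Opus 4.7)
The plan is to prove the interpolation inequality in two stages: first replace the $L^2$ norm on the right-hand side by the uniform norm $\|u\|_{0,P_T}$, and then upgrade the resulting inequality using a compactness/contradiction argument. This two-step strategy is standard for passing from ``$L^\infty$ interpolation'' to ``$L^2$ interpolation''.

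For the first stage I appeal to the classical multiplicative interpolation between parabolic H\"older seminorms: since $l+\alpha>m+\beta$, setting $\theta=(m+\beta)/(l+\alpha)\in[0,1)$, there is a constant $K$ depending only on $l,m,\alpha,\beta,n$ with
\[
\|u\|_{m,\beta,P_T}\le K\|u\|_{0,P_T}^{1-\theta}\,[u]_{l,\alpha,P_T}^{\theta}+K\|u\|_{0,P_T}.
\]
Such inequalities are standard in the parabolic setting (see e.g.\ the H\"older interpolation in Ladyzhenskaya--Solonnikov--Ural'tseva or in Krylov); in our case the parabolic scaling $(x,t)\mapsto(\lambda x,\lambda^2t)$ plays the role of ordinary dilation. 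Young's inequality $a^{1-\theta}b^{\theta}\le C_\eta a+\eta b$ then immediately converts this into
\[
\|u\|_{m,\beta,P_T}\le C_\eta\|u\|_{0,P_T}+\eta[u]_{l,\alpha,P_T}
\]
for every $\eta>0$.

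For the second stage it suffices to establish, for every $\eta'>0$, an estimate
\[
\|u\|_{0,P_T}\le C'\|u\|_{L^2(P_T)}+\eta'[u]_{l,\alpha,P_T},
\]
a statement in which the condition $l+\alpha>0$ is essential. I would argue by contradiction: if it failed for some $\eta'_0>0$, one would find a sequence $u_n$ with $\|u_n\|_{0,P_T}=1$, $\|u_n\|_{L^2(P_T)}\to 0$, and $[u_n]_{l,\alpha,P_T}$ bounded. Using translation invariance of all the relevant norms in the spatial variable, recenter each $u_n$ so that its supremum is attained, up to $1/n$, at a point of the form $(0,t_n)$. Then Arzel\`a--Ascoli (applied on compact subsets of $P_T$) produces a subsequential $C^0_{\mathrm{loc}}$ limit $u_\infty$ on $P_T$ with $|u_\infty(0,t_\infty)|\ge 1/2$ for some $t_\infty\in[0,T]$; on the other hand the $L^2$ convergence forces $u_\infty=0$ almost everywhere, a contradiction.

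Combining the two stages with $\eta=\epsilon/2$ and $\eta'=\epsilon/(2C_\eta)$ yields the desired $\|u\|_{m,\beta,P_T}\le C\|u\|_{L^2(P_T)}+\epsilon[u]_{l,\alpha,P_T}$. The main obstacle is the second step: because $P_T$ is unbounded in the spatial variable, one cannot invoke a direct Rellich-type compactness on the whole slab, so the contradiction argument must exploit spatial translation invariance to recenter the extremizing sequence before passing to a subsequence via Arzel\`a--Ascoli on compact subsets. The first stage is, by contrast, a mechanical consequence of well-documented parabolic H\"older interpolation together with Young's inequality.
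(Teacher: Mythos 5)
Your argument is correct as a proof of the lemma as literally stated, but it takes a genuinely different route from the paper's. The paper argues directly by a ``mean value in a box'' computation: for each $X\in P_T$ choose a parabolic box $B$ (a translate of $[0,\delta]^n\times[0,\delta^2]$) with $X\in B\subset P_T$; there is $\bar X\in B$ with $u^a(\bar X)$ equal to the mean of $u^a$ over $B$, whence
\[
|u^a(X)|\le |u^a(\bar X)|+|u^a(\bar X)-u^a(X)|\le \delta^{-(n/2+1)}\|u^a\|_{L^2(B)}+2\delta^\alpha[u^a]_{0,\alpha,B},
\]
and one takes $\delta$ small; the remaining $(l,m,\beta)$ cases are handled by the same device as in Section 6.8 of Gilbarg--Trudinger. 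Your two-stage route --- parabolic multiplicative H\"older interpolation against $\|u\|_{0,P_T}$, then a compactness/contradiction step to trade $\|u\|_{0,P_T}$ for $\|u\|_{L^2(P_T)}$ --- reaches the same conclusion, at the cost of quoting parabolic interpolation as an external input and replacing a short computation by a soft argument. The more substantive difference concerns uniformity in $T$: this lemma is invoked in Lemma~\ref{lem:schauder} and Proposition~\ref{prop:schauder} precisely to obtain a Schauder estimate with a constant independent of the length of the time interval, and the box argument delivers a constant depending only on $\delta$ (hence only on $\eps$ and $\alpha$) once $T\ge\delta^2$. Your contradiction argument is carried out at a single fixed $T$ and as written only produces $C=C(T)$; to serve the paper's downstream purpose you would need to allow $T_n\to\infty$, translate in time as well as in space before applying Arzel\`a--Ascoli, and handle a limiting function on the whole slab $\R^n\times\R$. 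That refinement is feasible, but it is an omission in what you wrote, and the paper's direct computation sidesteps the issue entirely.
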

\begin{proof}
Let us explain the proof when $l=m=0$ and $\beta=0<\alpha$. Let us notice that 
its sufficient to look only at one component $u^a$ of $u$. Let $X\in P_T$ and 
consider $\delta>0$. For $\delta$ small, we can consider a box $B$ which is a 
translate of $[0,\delta]^n\times[0,\delta^2]$ such that $B\subset P_T$ and 
$X\in 
B$ ($\delta$ can be chosen independently of $X$). Then there is $\barre X\in B$ 
such that $\int_B u^a=\delta^{n+2}u^a(\bar 
X)$. So 
\begin{align*}
|u^a(X)|&\le |u^a(\barre X)|+|u^a(\barre X)-u^a(X)|\\
&\le \frac{1}{\delta^{n+2}}\int_B |u^a|+2\delta^\alpha[u^a]_{0,\alpha,B}\\
&\le \frac{1}{\delta^{n/2+1}}\|u^a\|_{L^2(B)}+2\delta^\alpha[u^a]_{0,\alpha,B}\\
& \le 
\frac{1}{\delta^{n/2+1}}\|u^a\|_{L^2(P_T)}+2\delta^\alpha[u^a]_{0,\alpha,P_T}\\
\end{align*}
So choosing $\delta$ small enough we have the result.

Once this first estimate is established, the other ones can be obtained by 
similar arguments (for example, see Section~6.8 in \cite{GiTr}).
\end{proof}

The second result that we shall need is a Schauder type estimate for solution 
of 
\begin{equation}\label{eq:parab}
\pr_tu-\boL u=f
\end{equation}
over $\R^n\times[0,T]$ where $\boL$ is an operator as in \eqref{eq:operator} 
with constant coefficients and only second order terms.

\begin{lem}
Let $\boL$ be an operator as in \eqref{eq:operator} with constant coefficients 
and only second order terms. 
Then there is a constant $C$ such that the following statement is true. If 
$u\in C_0^{2,\alpha}(P_T)$ is a solution of \eqref{eq:parab} with $f\in 
C_0^{0,\alpha}(P_T)$ such that $u_t$ has compact support for any $t$ then
\begin{equation}\label{eq:schauder1}
[u]_{2,\alpha,P_T}\le C[f]_{0,\alpha,P_T}
\end{equation}
\end{lem}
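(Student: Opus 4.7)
The plan is a blow-up and compactness argument exploiting the scale-invariance of \eqref{eq:schauder1}. Under the parabolic rescaling $u_\lambda(x,t) = \lambda^{-2-\alpha} u(\lambda x, \lambda^2 t)$, one checks that $[u_\lambda]_{2,\alpha} = [u]_{2,\alpha}$, and because $\boL$ has constant coefficients and only second-order terms the equation transforms to $\partial_t u_\lambda - \boL u_\lambda = f_\lambda$ with $f_\lambda(x,t) = \lambda^{-\alpha} f(\lambda x, \lambda^2 t)$ and $[f_\lambda]_{0,\alpha} = [f]_{0,\alpha}$. Thus the inequality, if it holds at all, must hold with a scale-free constant, which is exactly what a contradiction argument will exploit.

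Suppose no such $C$ exists. I would extract sequences $u_k, f_k$ satisfying $\partial_t u_k - \boL u_k = f_k$ with $[u_k]_{2,\alpha,P_{T_k}} = 1$ and $[f_k]_{0,\alpha,P_{T_k}} \to 0$. Realize the seminorm up to a factor close to $1$ at pairs of points $(X_k, Y_k)$ whose parabolic distance I call $\lambda_k$. Extend $u_k$ and $f_k$ by zero to $t<0$; this preserves the Hölder regularity because $u_k \in C_0^{2,\alpha}$ and $f_k \in C_0^{0,\alpha}$ vanish to the appropriate order at $t=0$. Rescale around $X_k$ by setting $v_k(\xi,\tau) = \lambda_k^{-2-\alpha}\bigl[u_k(x_k+\lambda_k\xi, t_k+\lambda_k^2\tau) - P_k(\xi,\tau)\bigr]$, where $P_k$ is the second-order parabolic Taylor polynomial of $u_k$ at $X_k$. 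Then $v_k$ vanishes together with its first and second spatial derivatives and first time derivative at the origin, satisfies $[v_k]_{2,\alpha} = 1$, realizes a defining Hölder quotient of order one at unit parabolic distance from the origin, and solves $\partial_\tau v_k - \boL v_k = \tilde f_k$ with $[\tilde f_k]_{0,\alpha} \to 0$.

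The vanishing at the origin combined with the uniform $C^{2,\alpha}$ bound and the interpolation lemma above gives local $C^{2,\alpha}$ bounds on every compact set. Arzel\`a-Ascoli then produces a subsequential limit $v_\infty$, in $C^{2,\beta}_{\mathrm{loc}}$ for every $\beta < \alpha$, defined on a domain $D \subset \R^n \times \R$ equal either to all of $\R^n \times \R$ (if $t_k/\lambda_k^2 \to \infty$) or to a half-space $\R^n \times (-\infty, T_0]$ (if $t_k/\lambda_k^2$ stays bounded, thanks to the zero extension). The limit satisfies $\partial_\tau v_\infty - \boL v_\infty = 0$ and inherits $[v_\infty]_{2,\alpha,D} \ge 1$ at the limiting pair of points. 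A parabolic Liouville theorem for constant-coefficient second-order operators, applied to $v_\infty$ whose second spatial and first time derivatives have controlled Hölder growth, forces $v_\infty$ to be a parabolic polynomial of spatial degree $\le 2$ and temporal degree $\le 1$, which has $[v_\infty]_{2,\alpha} = 0$: contradiction.

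The main technical difficulty I expect is the bookkeeping in the passage to the limit: one must verify that the rescaled pair of witnesses stays at unit parabolic distance from the origin (built into the choice of $\lambda_k$) and that the rescaled Hölder quotient is uniformly bounded below, so that the seminorm survives the limit. A secondary point is to justify the Liouville step on a half-space $\R^n \times (-\infty, T_0]$; this can be done either by reflecting or simply by noting that the extended $v_\infty$ is zero for $\tau \ll 0$ (since the original $v_k$ were zero there after extension) and then applying the standard caloric-polynomial rigidity to the solution on the full space.
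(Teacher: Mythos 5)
The paper offers no proof of this lemma; it simply cites Section~15 of Solonnikov \cite{Slk} (Theorem~4.1 and Equation~(4.43)), where the estimate is obtained by explicit potential-theoretic kernel bounds for the constant-coefficient system. Your proposal reconstructs the estimate from scratch via a scaling-and-compactness argument in the spirit of Leon Simon's ``Schauder estimates by scaling.'' These are genuinely different routes. The kernel approach is quantitative and self-contained once one has the fundamental solution in hand; your blow-up approach avoids any explicit kernel estimate but instead requires (i) parabolic interpolation inequalities of exactly the kind proved in the preceding lemma of the appendix, and (ii) a parabolic Liouville theorem for entire ancient solutions of $\partial_t - \boL$ with sublinear H\"older growth, which you invoke but do not supply. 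Both ingredients are standard, so your proof is sound, and it has the advantage of making transparent why the constant is independent of $T$ and of the lower-order behavior of $u,f$, since only the scale-invariant seminorms enter.

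Two small points deserve care in a write-up. First, in the normalization step you must select \emph{one} of the five seminorms constituting $[u]_{2,\alpha,P_T}$ (namely $[\pr_x^2u]_{\alpha,x}$, $[\pr_tu]_{\alpha,x}$, $[\pr_xu]_{(1+\alpha)/2,t}$, $[\pr_x^2u]_{\alpha/2,t}$, $[\pr_t u]_{\alpha/2,t}$) that is bounded below along the sequence, and the witness pair and the definition of $\lambda_k$ depend on which one; the contradiction at the end is then read off from the corresponding derivative of $v_\infty$ being a constant of the appropriate type. Second, your remark that the extended $v_\infty$ ``is zero for $\tau\ll0$'' is not quite right: after subtracting the Taylor polynomial $P_k$, $v_k$ equals $-\lambda_k^{-2-\alpha}P_k$ on the zero-extension region, which is a nontrivial parabolic polynomial. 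What is true, and suffices for the Liouville step on a half-space $\R^n\times(-\infty,T_0]$, is that $\pr_x^2 v_\infty$ and $\pr_t v_\infty$ are constant on a lower half-space $\{\tau<\tau_0\}$; combined with sublinear growth and the Liouville/uniqueness theorem for ancient caloric functions this still forces them to be constant everywhere, and the conclusion goes through unchanged.
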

This result is established in Section~15 of \cite{Slk} (see Theorem~4.1 and 
Equation~(4.43))

We now want a similar result when $L$ depends on the variable $x$ and have 
terms of any order.
\begin{lem}\label{lem:schauder}
Let $L$ be an operator as in \eqref{eq:operator} with $C^{\alpha}$ coefficients 
independent of $t$. 
Then there is a constant $C$ such that the following statement is true. If 
$u\in C_0^{2,\alpha}(P_T)$ is a solution of \eqref{eq:parab} with $f\in 
C_0^{0,\alpha}(P_T)$ such that $u_t$ has compact support for any $t$ then
\begin{gather*}
[u]_{2,\alpha,P_T}\le C(\|f\|_{0,\alpha,P_T}+\|u\|_{L^2(P_T)}\\
\|u\|_{2,\alpha,P_T}\le C (\|f\|_{0,\alpha,P_T} +\|u\|_{L^2(P_T)})
\end{gather*}
\end{lem}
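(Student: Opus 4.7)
The plan is to reduce to the constant-coefficient estimate given by the preceding lemma via a standard freezing of coefficients argument. The crucial point, and the reason the constant can be taken independent of $T$, is that since $L$ has $t$-independent coefficients we can perform the entire reduction using cutoffs that depend only on $x$, so no $t$-localization (which would typically introduce a $T$-dependent factor) is ever needed.

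First I would cover $\R^n$ by a locally finite family of balls $B_r(x_i)$ of a small radius $r>0$ to be chosen, with bounded overlap, and let $\{\phi_i\}$ be an associated $C^2$ partition of unity with $\|\phi_i\|_{C^2}\le C$ uniformly in $i$. Decompose $L=L^{(2)}+L^{(\le 1)}$ into its second-order and lower-order parts, and let $L_i$ be the constant-coefficient, second-order operator obtained by freezing the coefficients of $L^{(2)}$ at $x_i$. For each $i$, set $u_i:=\phi_i u$. Then $u_i\in C_0^{2,\alpha}(P_T)$ has compact $x$-support in $B_r(x_i)$ and satisfies
\[
\pr_t u_i - L_i u_i = f_i, \qquad f_i := \phi_i f + [L^{(2)},\phi_i]u + (L^{(2)}-L_i)u_i + L^{(\le 1)}u_i,
\]
where the commutator $[L^{(2)},\phi_i]u$ involves only derivatives of $u$ of order at most one. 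The constant-coefficient estimate \eqref{eq:schauder1} then gives $[u_i]_{2,\alpha,P_T}\le C[f_i]_{0,\alpha,P_T}$ with $C$ independent of $T$.

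Next I would estimate $[f_i]_{0,\alpha,P_T}$. Since the coefficients of $L^{(2)}$ are $C^\alpha$ in $x$ and the support of $u_i$ lies in $B_r(x_i)$, the freezing error satisfies $[(L^{(2)}-L_i)u_i]_{0,\alpha,P_T}\le C\Lambda r^\alpha [u_i]_{2,\alpha,P_T}+\text{lower order}$. Choosing $r$ small enough that $C\Lambda r^\alpha\le 1/2$, this term can be absorbed into the left-hand side. Summing over $i$, using the bounded overlap of the cover, yields
\[
[u]_{2,\alpha,P_T}\le C\bigl([f]_{0,\alpha,P_T}+\|u\|_{1,\alpha,P_T}\bigr).
\]
The remaining lower-order norm is controlled by the interpolation inequality of the first lemma of the appendix: for any $\eps>0$, $\|u\|_{1,\alpha,P_T}\le \eps[u]_{2,\alpha,P_T}+C_\eps\|u\|_{L^2(P_T)}$, where the constants are $T$-independent because the proof only uses boxes of a fixed small size. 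Absorbing the $\eps$-term yields the first claimed estimate, and a second application of the same interpolation inequality to the zeroth- and first-order uniform norms gives the full $\|u\|_{2,\alpha,P_T}$ bound.

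The main obstacle, and what distinguishes this lemma from the standard parabolic Schauder estimate, is tracking $T$-independence throughout. This rests on three ingredients, all of which hold in the present setting: (i) the constant-coefficient estimate from the preceding lemma is stated uniformly in $T$; (ii) the cutoffs $\phi_i$ may be chosen independent of $t$ precisely because the coefficients of $L$ are; and (iii) the interpolation inequality is $T$-independent, as its proof shows. Once these are in place the freezing argument produces a constant depending only on $r$, the ellipticity constant $\lambda$, the $C^\alpha$ bound $\Lambda$ on the coefficients, and the dimension, but not on $T$.
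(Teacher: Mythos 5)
Your proposal is correct and follows essentially the same strategy as the paper: a freezing-of-coefficients argument using time-independent, $x$-supported cutoffs (crucial for $T$-uniformity), reduction to the constant-coefficient estimate of the preceding lemma, absorption of the freezing error by choosing the localization radius small, and a $T$-independent interpolation inequality to control the lower-order norms. The only cosmetic difference is that the paper uses a single cutoff $\phi_\delta$ centered at each point $p$ and then takes the supremum over $p$, whereas you use a locally finite partition of unity and sum, but these are interchangeable variants of the same argument.
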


\begin{proof}
Let $p$ be a point in $\R^n$, we are going to prove the estimate near $p$. Let 
$\delta>0$ and consider $\phi_\delta$ be a non-negative $C^\infty$ function on 
$\R^n$ with support on the ball $B$ centered at $p$ and radius $2\delta$, equal 
to $1$ on the ball $B'$ of 
radius $\delta$ and such that $[\phi_\delta]_{i,E}\le \frac C{\delta^i}$. We 
are going to estimate 
$\phi_\delta u$.

Let $L_p$ denote the operator $L(p)$ and $\boL_p$ the part of $L_p$ with only 
second order terms. Since $\pr_t u-Lu=f$ we have
\[
\pr_t (\phi_\delta u)-\boL_p(\phi_\delta u)=\phi_\delta 
f+\phi_\delta Lu-L(\phi_\delta u)+L(\phi_\delta u)-L_p(\phi_\delta u) 
+L_p(\phi_\delta u)-\boL_p(\phi_\delta u)
\]

So the estimate~\eqref{eq:schauder1} gives 
\begin{align*}
[\phi_\delta u]_{2,\alpha,B_T}\le& C([\phi_\delta 
f]_{0,\alpha,B_T}+[\phi_\delta 
Lu-L(\phi_\delta u)]_{0,\alpha,B_T}\\
&\qquad+[L(\phi_\delta u)-L_p(\phi_\delta 
u)]_{0,\alpha,B_T}+[L_p(\phi_\delta u)-\boL_p(\phi_\delta u)]_{0,\alpha,B_T})
\end{align*}
where $B_T$ denotes $B\times [0,T]$.

In the right hand side of the above estimate, the first term can be estimated 
by 
$[\phi_\delta f]_{0,\alpha,B_T}\le C_\delta \|f\|_{0,\alpha,B_T}$ (in the 
sequel $C_\delta$ will denote a constant that depends on $\delta$). In 
$\phi_\delta Lu-L(\phi_\delta u)$, the terms where the second derivatives of 
$u$ appears cancel, so $[\phi_\delta 
Lu-L(\phi_\delta u)]_{0,\alpha,B_T}\le C_\delta \|u\|_{1,\alpha,B_T}$. 
Similarly for the last term, $[L_p(\phi_\delta u)-\boL_p(\phi_\delta 
u)]_{0,\alpha,B_T}\le C_\delta \|u\|_{1,\alpha,B_T}$. For the 
third term, if $\Lambda$ bounds the $C^{0,\alpha}$ norm of the coefficients 
of $L_p$, we have
\[
[L(\phi_\delta u)-L_p(\phi_\delta u)]_{0,\alpha,B_T}\le 
\Lambda\delta^\alpha[\phi_\delta u]_{2,\alpha,B_T}+2\Lambda\|\phi_\delta 
u\|_{2,B_T}
\]

So fixing $\delta$ such that $C\Lambda\delta^\alpha<1$ we obtain
\[
[\phi_\delta u]_{2,\alpha,B_T}\le C_\delta (\|f\|_{0,\alpha,B_T} 
+\|u\|_{1,\alpha,B_T} +\|\phi_\delta u\|_{2,B_T})
\]
Since $\phi=1$ on $B'$ we obtain
\[
[u]_{2,\alpha,B'_T}\le C_\delta (\|f\|_{0,\alpha,P_T} 
+\|u\|_{1,\alpha,P_T} +\| u\|_{2,P_T})
\]
Since we can consider any point $p$ in $\R^n$, we have
\[
[u]_{2,\alpha,P_T}\le C_\delta (\|f\|_{0,\alpha,P_T} 
+\|u\|_{1,\alpha,P_T} +\| u\|_{2,P_T})
\]

By interpolation inequalities, we have $\|u\|_{1,\alpha,P_T} +\| u\|_{2,P_T}\le 
C_\eps \|u\|_{L^2(P_T)}+\eps [u]_{2,\alpha,P_T}$. So choosing $\eps$ such that 
$C_\delta \eps<1$, we obtain 
\[
[u]_{2,\alpha,P_T}\le C_\delta (\|f\|_{0,\alpha,P_T} +\|u\|_{L^2(P_T)})
\]
Using interpolation inequality again we finally have
\[
\|u\|_{2,\alpha,P_T}\le C_\delta (\|f\|_{0,\alpha,P_T} +\|u\|_{L^2(P_T)})
\]
\end{proof}

We can now give the proof of Proposition~\ref{prop:schauder}.

\begin{proof}[Proof of Proposition~\ref{prop:schauder}]
Let $U$ and $F$ be as in the proposition. If $T\le 1$ the result is given by 
Theorem~\ref{thm:schauder}, so let us assume that $T\ge 1$ and let $\eta 
:\R_+\to \R_+$ such that $\eta=1$ in a neighborhood of $0$ and $\eta=0$ on 
$[1/2,+\infty)$. So we can write $U=\eta(t) U+(1-\eta(t)) U$. By 
Theorem~\ref{thm:schauder}, we have $\|\eta(t)U\|_{2,\alpha,E_T}\le 
C\|U\|_{2,\alpha,E_1}\le 
C(\|F\|_{0,\alpha,E_T}+|U_0|_{2,\alpha,E}+\|U\|_{L^2(E_T)})$. The 
section $W=(1-\eta)U\in C_0^{2,\alpha}(E_T)$ is then a solution of 
\[
\pr_t W-L W=(1-\eta)F-\eta' U
\]

Let $(\phi_i)_{1\le i\le m}$ be a partition of the unity such that the $\phi_i$ 
has support in local charts $\Ome^i$ of $E$. Then each $W_i=\phi_i W$ is a 
solution of $\pr_tW_i-LW_i=G_i$. In the chart, $W_i$ can be written as 
a section $w_i$ over $\R^n$ with compact support which is a solution of $\pr_t 
w_i-Lw_i=g_i$. So, by Lemma~\ref{lem:schauder}
\[
\|w_i\|_{2,\alpha,P_T}\le C(\|g_i\|_{0,\alpha,P_T}+\|w_i\|_{L^2(P_T)})
\]
This gives
\[
\|W_i\|_{2,\alpha,{\Ome^i}_T}\le 
C(\|g_i\|_{0,\alpha,{\Ome^i}_T}+\|w_i\|_{L^2({\Ome^i}_T)})
\]
where ${\Ome^i}_T$ denotes the bundle over $\Ome^i\times [0,T]$.
Summing these estimates and using that a finite number of $W_i$ is sufficient 
we obtain
\begin{align*}
\|W\|_{2,\alpha,1,E_T}&\le 
C(\|F\|_{0,\alpha,E_T}+\|U\|_{0,\alpha,E_1}+\|U\|_{L^2(E_T)})\\
&\le C(\|F\|_{0,\alpha,E_T}+|U_0|_{2,\alpha,E}+\|U\|_{L^2(E_T)})\\
\end{align*}
where we have used $\eta'=0$ outside $[0,1]$ and Solonnikov's estimate on 
$E_1$. So adding both estimates for $\eta U$ and $(1-\eta)U$ gives the desired 
result.
\end{proof}

\end{document}